\newtheorem{theorem}{Theorem}[section]
 \newtheorem{corollary}[theorem]{Corollary}
 \newtheorem{lemma}[theorem]{Lemma}
 \newtheorem{proposition}[theorem]{Proposition}
 \newtheorem{conjecture}[theorem]{Conjecture}
 \theoremstyle{definition}
 \newtheorem{definition}[theorem]{Definition}
 \theoremstyle{remark}
 \newtheorem{remark}[theorem]{Remark}
 \newtheorem{notation}[theorem]{Notation}
 \numberwithin{equation}{section}
\begin{document}

\title{Homological symbols and the Quillen Conjecture}%
\author{Marian F. Anton}%
\address{Department of Mathematics, University of Kentucky, Lexington, KY 40506-0027, U.S.A. and I.M.A.R., P.O. Box 1-764, Bucharest, RO 014700}%
\email{anton@ms.uky.edu}%


\begin{abstract}
We formulate a "correct" version of the Quillen conjecture on
linear group homology for certain arithmetic rings and provide
evidence for the new conjecture. In this way we predict that the
linear group homology has a direct summand looking like an
unstable form of Milnor K-theory and we call this new theory
"homological symbols algebra". As a byproduct we prove the Quillen
conjecture in homological degree two for the rank two and the
prime $5$.
\end{abstract}
\maketitle

\section{Introduction}

Let $R$ be a subring with identity of the complex numbers $\mathbb
C$ and resp. $GL_n$, $SL_n$ the discrete group of $n\times n$
matrices over $R$ with determinant resp. nonzero, $1$. If
$H(GL_n):=H^*(GL_n;\mathbb F_\ell)$ denotes the mod $\ell$ group
cohomology of $GL_n$, then the canonical inclusion
$R\subset\mathbb C$ induces a module structure of $H(GL_n)$ over
the singular mod $\ell$ cohomology ring of Chern classes
$P_n:=H^*(BGL_n(\mathbb C);\mathbb F_\ell)$ where $BGL_n(\mathbb
C)$ denotes the classifying space of the Lie group $GL_n(\mathbb
C)$ of invertible $n\times n$ matrices over $\mathbb C$. In
\cite{MR0298694}*{p. 591} Quillen conjectured that for certain
primes $\ell$ and rings $R$ the module $H(GL_n)$ is \emph{free}
over $P_n$. We call this statement \emph{the \emph{strong} Quillen
conjecture for the rank $n$ and the prime $\ell$}.

In particular, if we fix $R=\mathbb Z[\frac{1}{\ell},\xi_\ell]$
where $\ell$ is a \emph{regular} prime and $\xi_\ell\in\mathbb C$
is a primitive $\ell$-th root of unity, then it has been shown in
\cite{MR1312569}*{p. 51} that the strong Quillen's conjecture
implies that \emph{the homomorphism
$$\iota_{np}:H_p(GL_1^{\times n};\mathbb F_\ell)\to H_p(GL_n;\mathbb
F_\ell)$$ induced by the canonical inclusion $GL_1^{\times
n}\subset GL_n$ on mod $\ell$ homology is \emph{surjective} for
all $p$}. We call the statement that $\iota_{np}$ is surjective
\emph{the \emph{weak} Quillen conjecture in homological degree $p$
for the rank $n$ and the prime $\ell$}. This weak conjecture was
disproved in \cite{MR1443381} for $n\ge 32$, $\ell=2$, and in
\cite{MR1723188} for $n\ge 27$, $\ell=3$, in the sense that there
is an unspecified $p$ depending on $n$ and $\ell$ for which the
statement fails.

In this article we formulate yet another conjecture for $\ell$
\emph{odd} and regular (Conjecture \ref{cj}) which proves that the
weak Quillen conjecture for the rank $n$, the prime $\ell$ and
\emph{all} homological degrees $p$ implies the strong Quillen
conjecture for the same rank $n$ and prime $\ell$ (see subsection
\S \ref{statement} for a full discussion). More specifically, by
Proposition \ref{reduce}, this new conjecture states  that a
certain \emph{finite} set of homological classes in
$H_*(GL_1;\mathbb F_\ell)$ vanish in $H_*(SL_2;\mathbb F_\ell)$
under the map induced from embedding $GL_1$ in $SL_2$ via
$u\mapsto\begin{pmatrix}u^{-1}&0\\0&u\end{pmatrix}$. These classes
are called \emph{\'{e}tale obstruction classes} since they
originate from studying \'{e}tale models \cite{MR1973992} for the
classifying spaces $BGL_n$. The bar complex cycles representing
these classes are given explicitly in Definition \ref{eos}.

As evidence for the Conjecture \ref{cj} we remark that this
conjecture and the weak Quillen conjecture for all $p$ are true
for $\ell=3$ by direct calculations \cite{MR1723188} and thus, the
strong Quillen conjecture holds in this case. Also the case
$\ell=2$ fits into the same pattern for the ranks $n=2$
\cite{MR1147814} and $n=3$ \cite{MR1683179}. In this article we
prove a new result stating that
\begin{theorem}\label{main1}
$H_2(SL_2(\mathbb Z[\frac{1}{5},\xi_5];\mathbb F_5)=0$.
\end{theorem}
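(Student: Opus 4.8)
The plan is to reduce to a pair of statements about $SL_2(\mathbb{Z}[\xi_5])$ and its congruence subgroup of level $\mathfrak{p}=(1-\xi_5)$, using that $R=\mathbb{Z}[\tfrac1{5},\xi_5]=\mathbb{Z}[\xi_5][\tfrac1{\mathfrak p}]$ with $\mathfrak{p}^4=(5)$ and residue field $\mathbb{F}_5$. First one checks that $SL_2(R)$ is perfect: $R$ has infinitely many units, so $SL_2(R)=E_2(R)$ by Vaserstein, and since the unit $u=(1+\sqrt5)/2\in R^{\times}$ satisfies $u^2-1\in R^{\times}$, every elementary matrix is a commutator; the same reasoning gives $H_1(SL_2(\mathbb{Z}[\xi_5]);\mathbb{F}_5)=0$. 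Hence by the universal coefficient theorem $H_2(SL_2(R);\mathbb{F}_5)\cong H_2(SL_2(R);\mathbb{Z})\otimes\mathbb{F}_5$, and it is enough to show this group vanishes.

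Next, since $\mathbb{Z}[\xi_5]$ is a principal ideal domain and $SL_2(R)$ preserves the parity of the level of a lattice, the action of $SL_2(R)$ on the Bruhat--Tits tree of $SL_2$ over the completion at $\mathfrak{p}$ has a single edge as quotient, and Serre's theory presents $SL_2(R)$ as the amalgam $SL_2(\mathbb{Z}[\xi_5])\ast_{\Gamma_0(\mathfrak p)}SL_2(\mathbb{Z}[\xi_5])$, where $\Gamma_0(\mathfrak p)$ is the preimage of the upper-triangular Borel $B(\mathbb{F}_5)\subset SL_2(\mathbb{F}_5)$, of index $6=|\mathbb{P}^1(\mathbb{F}_5)|$. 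The Mayer--Vietoris sequence with $\mathbb{F}_5$ coefficients, together with $H_1(SL_2(\mathbb{Z}[\xi_5]);\mathbb{F}_5)=0$, reduces the theorem to (i) $H_1(\Gamma_0(\mathfrak p);\mathbb{F}_5)=0$, and (ii) $H_2(SL_2(\mathbb{Z}[\xi_5]);\mathbb{F}_5)=0$ (which kills the remaining cokernel term). For both I would invoke the congruence subgroup property for $SL_2$ over $\mathbb{Q}(\xi_5)$ (Serre; the congruence kernel is trivial): the principal congruence subgroup $\Gamma(\mathfrak p)$ has abelianization, mod $5$, supported at $\mathfrak p$, where $\ker(SL_2(\mathcal{O}_{\mathfrak p})\to SL_2(\mathbb{F}_5))$ is a pro-$5$ group contributing the adjoint module $\mathfrak{sl}_2(\mathbb{F}_5)$ (every other prime of $\mathbb{Z}[\xi_5]$ has residue field of size at least $11$, so those local factors are perfect). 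Feeding $H_1(\Gamma(\mathfrak p);\mathbb{F}_5)\cong\mathfrak{sl}_2(\mathbb{F}_5)$ into the Lyndon--Hochschild--Serre spectral sequences for $1\to\Gamma(\mathfrak p)\to\Gamma_0(\mathfrak p)\to B(\mathbb{F}_5)\to1$ and $1\to\Gamma(\mathfrak p)\to SL_2(\mathbb{Z}[\xi_5])\to SL_2(\mathbb{F}_5)\to1$, statement (i) comes down to the vanishing of the coinvariants $\mathfrak{sl}_2(\mathbb{F}_5)_{B(\mathbb{F}_5)}$ and of $H_1(B(\mathbb{F}_5);\mathbb{F}_5)$, both short finite-group calculations that use only that $|B(\mathbb{F}_5)^{\mathrm{ab}}|=4$ is prime to $5$.

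Statement (ii) is the crux. In its spectral sequence the entries $H_2(SL_2(\mathbb{F}_5);\mathbb{F}_5)$ and $H_1(SL_2(\mathbb{F}_5);\mathfrak{sl}_2(\mathbb{F}_5))$ vanish by the same finite-group computations, so one is left with $H_0(SL_2(\mathbb{F}_5);H_2(\Gamma(\mathfrak p);\mathbb{F}_5))$ --- morally the part ``new at level $\mathfrak p$'' together with the $5$-torsion of $SL_2(\mathbb{Z}[\xi_5])$. The rational part is handled by noting that there are no cusp forms at this minimal level, so $H^2(SL_2(\mathbb{Z}[\xi_5]);\mathbb{Q})=0$; the $5$-torsion part is handled via the explicit reduction theory of $SL_2(\mathbb{Z}[\xi_5])$ acting on $\mathbb{H}^3\times\mathbb{H}^3$, where every $5$-subgroup is a $\mathbb{Z}/5$ conjugate to $\langle\mathrm{diag}(\xi_5,\xi_5^{-1})\rangle$, with small fixed locus and normalizer, and the equivariant spectral sequence shows its homology does not survive into degree $2$. (Alternatively one may compare with algebraic $K$-theory: the relevant summand of $H_2(SL_2(\mathbb{Z}[\xi_5]);\mathbb{Z})$ agrees, away from the primes dividing $6$, with $K_2(\mathbb{Z}[\xi_5])$, whose $5$-primary part is $H^2_{\mathrm{et}}(\mathbb{Z}[\tfrac1{5},\xi_5];\mathbb{Z}_5(2))=0$.) Assembling (i) and (ii) in the Mayer--Vietoris sequence then gives $H_2(SL_2(R);\mathbb{F}_5)=0$. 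I expect the hardest point to be exactly step (ii) --- controlling the mod-$5$ cohomology of $SL_2(\mathbb{Z}[\xi_5])$ in degree two, which genuinely needs either the geometry of the quartic cyclotomic locally symmetric space or an input from the $K$-theory of cyclotomic rings; everything else is formal once the amalgam decomposition and the congruence subgroup property are in hand.
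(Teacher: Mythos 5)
Your route is genuinely different from the paper's, and its skeleton up to a point is sound: perfectness of $SL_2(R)$ (the golden-ratio unit trick is fine), Serre's amalgam $SL_2(R)\cong SL_2(\mathcal O)\ast_{\Gamma_0(\mathfrak p)}SL_2(\mathcal O)$ with $\mathcal O=\mathbb Z[\xi_5]$ (class number one), the Mayer--Vietoris reduction to your statements (i) and (ii), and step (i) via the congruence subgroup property, the local computation $H_1(\Gamma(\mathfrak p);\mathbb F_5)\cong\mathfrak{sl}_2(\mathbb F_5)$ and the vanishing of the Borel coinvariants, are all plausible (though the local Frattini-quotient computation at the totally ramified prime, where $e=\ell-1$, and the appeal to strong approximation deserve to be written out). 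The paper does none of this: it stays entirely inside combinatorial group theory, using Cohn's presentation to build $SE_2$, Lenstra's theorem that $R$ is Euclidean for $\ell=5$ so that $SE_2\cong SL_2(R)$, the explicit finite presentation of Theorem \ref{main}, perfectness (Lemma \ref{perfect}), the mod-$\ell$ Hopf formula \eqref{elHopf}, and a GAP/Knuth--Bendix computation giving $H_2(SE_2;\mathbb F_5)=0$ by a dimension count, whence Corollary \ref{proof}.

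The genuine gap is exactly where you place the crux: statement (ii), $H_2(SL_2(\mathbb Z[\xi_5]);\mathbb F_5)=0$, is asserted, not proved, and it is at least as hard as the theorem itself (unstable degree-two homology of an arithmetic group at the interesting prime). None of the three justifications you sketch closes it. The cusp-form argument only controls $H^2$ with $\mathbb Q$-coefficients and says nothing about $5$-torsion in integral $H_2$, which is what $\mathbb F_5$-coefficients see. The finite-subgroup/equivariant argument does not apply in degree $2$: the virtual cohomological dimension here is $5$, so mod-$5$ classes in degree $2$ need not be detected on normalizers of finite $5$-subgroups; and since $\mathrm{diag}(\xi_5,\xi_5^{-1})$ generates a $\mathbb Z/5$ in $SL_2(\mathbb Z[\xi_5])$ with $H_2(\mathbb Z/5;\mathbb F_5)\neq0$, a nonzero class is a live possibility you must actually kill, not wave away. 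Finally, the $K$-theoretic alternative is invalid in principle: $H_2(SL_2(\mathcal O);\mathbb Z)$ is unstable and is not identified with $K_2(\mathcal O)$ (that identification requires stabilizing through $SL_n$ for large $n$), and the failure of precisely such unstable-to-stable comparisons at $\ell$ is the theme of this paper. Note also that Mayer--Vietoris only needs the weaker statement that the map $H_2(SL_2(\mathcal O);\mathbb F_5)^{\oplus2}\to H_2(SL_2(R);\mathbb F_5)$ vanishes, but you give no argument for that either; until (ii) or this weaker statement is established by some independent means, the proposal does not prove the theorem, and the paper's own proof suggests that no soft argument is currently available--it falls back on an explicit presentation and a machine verification through Hopf's formula.
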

As a corollary, our conjecture is true in homological degree two
for $\ell=5$ in the sense that the \'{e}tale obstruction classes
from $H_2(GL_1;\mathbb F_5)$ obviously vanish in $H_2(SL_2;\mathbb
F_5)$. As a byproduct, we obtain that the weak Quillen conjecture
in homological degree two for the rank two and the prime $5$ is
also true and

\begin{theorem}\label{main2}
$H_2(GL_2(\mathbb Z[\frac{1}{5},\xi_5];\mathbb F_5)\approx \mathbb
F_5\oplus\mathbb F_5\oplus\mathbb F_5\oplus\mathbb F_5$.
\end{theorem}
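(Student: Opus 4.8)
The plan is to deduce Theorem \ref{main2} (together with the weak Quillen conjecture in homological degree two for the rank two and the prime $5$) from Theorem \ref{main1} by exploiting the determinant. Write $R=\mathbb Z[\frac15,\xi_5]$. Since $u\mapsto\begin{pmatrix}u&0\\0&1\end{pmatrix}$ is a section of $\det\colon GL_2(R)\to R^\times$, the group $GL_2(R)$ is a semidirect product of $SL_2(R)$ by $R^\times$, and I would feed the split extension $1\to SL_2(R)\to GL_2(R)\to R^\times\to1$ into the Lyndon--Hochschild--Serre spectral sequence
$$E^2_{pq}=H_p\bigl(R^\times;H_q(SL_2(R);\mathbb F_5)\bigr)\ \Longrightarrow\ H_{p+q}(GL_2(R);\mathbb F_5).$$
Because the extension splits, the bottom row consists of permanent cycles, so $E^\infty_{p,0}=E^2_{p,0}=H_p(R^\times;\mathbb F_5)$, and the section $s$ realizes $H_\ast(R^\times;\mathbb F_5)$ as a retract of $H_\ast(GL_2(R);\mathbb F_5)$; in particular there are no differentials leaving the bottom row.

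I would then evaluate the two edges. By Dirichlet's unit theorem, $\mathbb Q(\xi_5)$ being totally complex of degree $4$ and having a single, totally ramified, prime above $5$, one obtains $R^\times\cong\mathbb Z/10\times\mathbb Z^2$, whence $H_\ast(R^\times;\mathbb F_5)\cong H_\ast(\mathbb Z/5;\mathbb F_5)\otimes\Lambda^{\ast}_{\mathbb F_5}(\mathbb F_5^2)$, which has $\mathbb F_5$-dimension $1,3,4$ in degrees $0,1,2$; in particular $E^2_{2,0}\cong\mathbb F_5^{4}$. On the other edge Theorem \ref{main1} gives $E^2_{0,2}=H_2(SL_2(R);\mathbb F_5)=0$, so $E^\infty_{0,2}=0$; and since the bottom row survives (no differential out of $E^2_{3,0}$), also $E^\infty_{1,1}=E^2_{1,1}=H_1\bigl(R^\times;H_1(SL_2(R);\mathbb F_5)\bigr)$. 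Thus the whole computation reduces to showing $E^2_{1,1}=0$, and this is the step I expect to be the main obstacle.

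For that one needs $H_1(SL_2(R);\mathbb F_5)=SL_2(R)^{\mathrm{ab}}\otimes\mathbb F_5$ to vanish, i.e. $SL_2(R)^{\mathrm{ab}}$ to have no $5$-torsion. Here I would use that $R$ is a ring of $S$-integers with infinitely many units, so $SL_2(R)$ has the congruence subgroup property; by strong approximation the congruence completion is $\prod_{v\notin S}SL_2(\mathcal O_v)$, whose abelianization is the finite group $SL_2(\mathcal O_{v_2})^{\mathrm{ab}}\times SL_2(\mathcal O_{v_3})^{\mathrm{ab}}$ of order prime to $5$ (the places above $2$ and $3$ --- each inert in $\mathbb Q(\xi_5)$ --- are the only ones at which $SL_2(\mathcal O_v)$ fails to be perfect), while the finite cyclic congruence kernel, of order dividing $|\mu(\mathbb Q(\xi_5))|=10$, is contained in the commutator subgroup (the content of the metaplectic computation for $SL_2$) and so contributes nothing to the abelianization; hence $SL_2(R)^{\mathrm{ab}}$ is finite of order prime to $5$ and $E^2_{1,1}=0$. (Even if one only bounds $\dim_{\mathbb F_5}H_1(SL_2(R);\mathbb F_5)\le1$ and the one-dimensional case occurred, the conjugation action of the free part $\mathbb Z^2\subset R^\times$ on that line --- described by norm-residue symbols --- would be non-trivial, killing $H_1(R^\times;-)$ all the same.) Since $H_2(GL_2(R);\mathbb F_5)$ is an $\mathbb F_5$-vector space, the filtration now collapses to give $H_2(GL_2(R);\mathbb F_5)\cong E^\infty_{2,0}\cong\mathbb F_5^{4}$. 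Finally $s$ factors through the block-diagonal inclusion $GL_1^{\times2}\subset GL_2$, so $\iota_{2,2}$ surjects onto the retract $s_\ast H_2(R^\times;\mathbb F_5)$, which by the above is all of $H_2(GL_2(R);\mathbb F_5)$; this is the asserted weak Quillen statement.
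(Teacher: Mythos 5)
Your spectral-sequence skeleton is exactly the paper's intended route: Theorem \ref{main2} is deduced from Theorem \ref{main1} via the Lyndon--Hochschild--Serre spectral sequence of the split determinant extension $1\to SL_2\to GL_2\to GL_1\to 1$, with $E^2_{2,0}=H_2(GL_1;\mathbb F_5)\cong\mathbb F_5^4$ (Proposition \ref{homology} applied to $GL_1\cong\mathbb Z/10\times\mathbb Z^2$, as described at the start of \S\ref{eoc}), $E^2_{0,2}=0$ by Theorem \ref{main1}, and $E^2_{1,1}=0$. Your differential bookkeeping, the use of the splitting to make the bottom row consist of permanent cycles, the dimension count $1,3,4$ in degrees $0,1,2$, and the byproduct that $\iota_{2,2}$ is surjective are all correct.

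The divergence --- and the genuine gap --- is your justification of $H_1(SL_2(R);\mathbb F_5)=0$, which you route through the congruence subgroup property, strong approximation and the metaplectic kernel. First, the parenthetical about the places over $2$ and $3$ is wrong as stated: both $2$ and $3$ are inert in $\mathbb Q(\xi_5)$, so the residue fields are $\mathbb F_{16}$ and $\mathbb F_{81}$ and $SL_2(\mathcal O_v)$ is (topologically) perfect at \emph{every} finite place; this slip is harmless, but it shows the local analysis was not actually carried out. Second, and more seriously, everything hinges on the assertion that the congruence kernel --- finite cyclic of order dividing $|\mu(\mathbb Q(\xi_5))|=10$, hence a priori containing $5$-torsion --- is killed in the abelianization; invoking ``the content of the metaplectic computation'' is not a proof, and this is precisely the only place where a mod~$5$ class in $H_1(SL_2(R);\mathbb F_5)$ could survive (the claim that the kernel lies in the closed commutator subgroup of the profinite completion is essentially equivalent to the perfection you are trying to prove). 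The fallback argument via a ``non-trivial conjugation action described by norm-residue symbols'' is likewise asserted, not established. None of this machinery is needed: Lemma \ref{perfect} shows that $SE_2$ is perfect directly from the presentation of Theorem \ref{main}, and $SE_2\cong SL_2(R)$ because $R$ is Euclidean for $\ell=5$ (Cohn, Lenstra), so $H_1(SL_2(R);\mathbb F_5)=0$ is already available in the paper by an elementary argument. Replace your congruence-subgroup paragraph by an appeal to Lemma \ref{perfect} (or supply precise references for the triviality of the congruence kernel and the perfection of the local groups), and the rest of your proof stands and coincides with the paper's argument.
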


The technique used in proving Theorem \ref{main1} is based on
proving a key result in Theorem \ref{main} regarding the structure
of the group $SL_2$ as a finitely presented group and using GAP
\cite{gap} in a clever way. The main difficulties reside in the
complexity of the combinatorial group problems associated with
Hopf's formula and its generalizations \cite{MR997360}.

Another feature of this article is a characterization of a direct
summand (as a vector space) of the bigraded algebra
\begin{equation}\label{a}
A:=\bigoplus_{i,j=0}^\infty H_i(GL_j;\mathbb F_\ell)
\end{equation}
where the algebra structure is induced from the matrix block
multiplication. This summand is the bigraded subalgebra $KA\subset
A$ generated by the linear subspace $H_*(GL_1;\mathbb
F_\ell)\subset A$ and its structure is predicted by the new
conjecture in the sense that the relations in $KA$ come from
$H_*(GL_1^{\times 2};\mathbb F_\ell)$ in a certain explicit way
(see Remark \ref{useful}).

We recall \cite{MR0260844} that the (naive) Milnor $K$-theory of
the ring $R$ is the tensor algebra generated by the group of units
$GL_1$ modulo the Steinberg relations $u\otimes(1-u)=0$ coming
from $GL_1^{\otimes 2}$ for $u,1-u\in GL_1$. By replacing $GL_1$
with $H_*(GL_1;\mathbb F_\ell)$, $GL_1^{\otimes2}$ with
$H_*(GL_1^{\times 2};\mathbb F_\ell)$ and the Steinberg relations
with those relations predicted by our conjecture, we obtain the
conjectural structure of $KA$. For this reason, we call $KA$ the
\emph{algebra of homological symbols at $\ell$} associated with
the ring $R$.

The paper is organized as follows. After reviewing some basic
group homology facts in \S \ref{prelim} and introducing some
algebra terminology in \S \ref{algebra}, we describe the direct
summand of the algebra \eqref{a} and estimate from "above" the
relations of this summand in Theorem \ref{kernel}. The conjecture
on the exact relations is formulated in \S \ref{conj}. In \S
\ref{fp} we estimate the relations in $SL_2$ from "below" for
\emph{any} regular odd prime and use them in \S \ref{hopff} to
prove Theorem \ref{main1} (see Corollary \ref{proof}). Theorem
\ref{main2} follows now from Theorems \ref{main1} and \ref{kernel}
by a spectral sequence argument.

\section{Group homology preliminaries}\label{prelim}

We recall some standard facts about group homology as in
\cite{Brown}. Let $G$ be a multiplicative group with neutral
element $1\in G$ and $k$ a commutative ring with identity.

\subsection{The shuffle product}
Let $\mathcal B_*(G;k)$ be the normalized bar complex:
\begin{equation}\label{chain}
\mathcal B_0(G;k)\xleftarrow{\partial}\mathcal B_1(G;k)...\xleftarrow{\partial}\mathcal
B_{s-1}(G;k)\xleftarrow{\partial}\mathcal B_s(G;k)\xleftarrow{\partial}...
\end{equation}
where $\mathcal B_s(G;k)$ is the free $k$-module generated by the set of symbols $[x_1|...|x_s]$ with $x_1,...,x_s\in
G\setminus\{1\}$ and $\partial$ is the $k$-homomorphism given by the formula:
\begin{equation*}
\partial[x_1|...|x_s]=[x_2|...|x_s]+\sum_{j=1}^{s-1}(-1)^j[x_1|...|x_jx_{j+1}|...|x_s]+(-1)^s[x_1|...|x_{s-1}]
\end{equation*}
with $[x_1|...|x_jx_{j+1}|...|x_s]=0$ by convention if $x_jx_{j+1}=1$. By definition, the group homology $H_*(G;k)$
with $k$-coefficients is the homology of the chain complex \eqref{chain}.

On the other hand, the chain complex \eqref{chain} can be regarded as a graded algebra $\mathcal B(G;k)$ over $k$ which
is anti-commutative, associative, and unital with respect to the shuffle product
\begin{equation}\label{shuffle}
[x_1|...|x_i]\wedge[x_{i+1}|...|x_{i+s}]=\sum(-1)^\sigma[x_{\sigma(1)}|...|x_{\sigma(i+s)}]
\end{equation}
where the sum is over all the permutations $\sigma$ of $i+s$ letters that shuffle $\{1,...,i\}$ with $\{i+1,...,i+s\}$
i.e. $\sigma^{-1}(1)<...<\sigma^{-1}(i)$ and $\sigma^{-1}(i+1)<...<\sigma^{-1}(i+s)$ and $(-1)^\sigma$ is the signature
of $\sigma$.

Nevertheless, $\mathcal B(G;k)$ is not necessarily a differential algebra since the Leibniz formula
\begin{eqnarray}\label{Leibniz}
\\\nonumber
\partial([x_1|...|x_i]\wedge[x_{i+1}|...|x_{i+s}])=(\partial
[x_1|...|x_i])\wedge[x_{i+1}|...|x_{i+s}]\\
+(-1)^i[x_1|...|x_i]\wedge(\partial[x_{i+1}|...|x_{i+s}])\nonumber
\end{eqnarray}
holds if and only if $x_jx_k=x_kx_j$ for all $j\le i<k$. As an immediate consequence of \eqref{Leibniz} we have the
following

\begin{lemma}\label{exteriorg}
If $x_1,...,x_i$ are elements of $G$ commuting with one another, then the element of $\mathcal B_i(G;k)$ given by
formula
$$
\langle x_1,x_2,...,x_i\rangle=[x_1]\wedge[x_2]\wedge...\wedge[x_i]
$$
is a cycle  representing a homological class in $H_i(G;k)$ which is $i$-linear and skew-symmetric in $x_1,...,x_i$.
\end{lemma}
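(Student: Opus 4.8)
The plan is to verify the three assertions — that $\langle x_1,\dots,x_i\rangle$ is a cycle, that the induced homology class is $i$-linear, and that it is skew-symmetric — by repeated appeal to the Leibniz formula \eqref{Leibniz}. First I would record the base case: for a single commuting tuple the hypothesis ``$x_jx_k=x_kx_j$ for all $j\le 1<k$'' is automatic once all the $x_j$ commute, so \eqref{Leibniz} applies to every factorization of $[x_1]\wedge\dots\wedge[x_i]$ as a shuffle of $[x_1]\wedge\dots\wedge[x_r]$ with $[x_{r+1}]\wedge\dots\wedge[x_i]$. Since $\partial[x_j]=0$ in the normalized bar complex (the boundary of a length-one symbol is $[\,]-[\,]=0$), an easy induction on $i$ using \eqref{Leibniz} gives $\partial(\langle x_1,\dots,x_i\rangle)=0$; this is the cycle claim.

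Next I would treat skew-symmetry. Because the shuffle product on $\mathcal B(G;k)$ is anti-commutative (as stated just before \eqref{shuffle}), we have $[x_a]\wedge[x_b]=-[x_b]\wedge[x_a]$ at the chain level already — no passage to homology is needed — and hence $\langle\dots,x_a,\dots,x_b,\dots\rangle=-\langle\dots,x_b,\dots,x_a,\dots\rangle$ for a transposition of adjacent entries, so the class is skew-symmetric in all $i$ arguments.

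The remaining point, $i$-linearity, is the one requiring a genuine homological (not merely chain-level) argument, and I expect it to be the main obstacle. By skew-symmetry it suffices to prove additivity in the first slot: given commuting elements $x_1,x_1',x_2,\dots,x_i$ (all pairwise commuting), show $\langle x_1x_1',x_2,\dots,x_i\rangle$ and $\langle x_1,x_2,\dots,x_i\rangle+\langle x_1',x_2,\dots,x_i\rangle$ represent the same class. The key identity is $\partial[x_1|x_1']=[x_1']-[x_1x_1']+[x_1]$, i.e. $[x_1]+[x_1']-[x_1x_1']=\partial[x_1|x_1']$ in $\mathcal B_1(G;k)$. Wedging this on the right with the cycle $[x_2]\wedge\dots\wedge[x_i]$ and applying \eqref{Leibniz} (legitimate since $x_1,x_1'$ commute with each $x_k$, $k\ge 2$), the right-hand side becomes $\partial\bigl([x_1|x_1']\wedge[x_2]\wedge\dots\wedge[x_i]\bigr)$ up to sign, because the term involving $\partial([x_2]\wedge\dots\wedge[x_i])$ vanishes. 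Hence $\langle x_1,\dots,x_i\rangle+\langle x_1',x_2,\dots,x_i\rangle-\langle x_1x_1',x_2,\dots,x_i\rangle$ is a boundary, which is exactly additivity in homology. Finally, additivity forces $\langle 1,x_2,\dots,x_i\rangle$ to be null-homologous (and in the normalized complex $[1]=0$, so this entry is literally zero), so the pairing descends to a multilinear skew-symmetric map, completing the proof.
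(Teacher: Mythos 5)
Your argument is correct and follows exactly the route the paper intends: the paper simply records the lemma as an immediate consequence of the Leibniz formula \eqref{Leibniz}, and your three verifications (cycle via $\partial[x_j]=0$ plus Leibniz induction, skew-symmetry from anti-commutativity of the shuffle product, and additivity from wedging $\partial[x_1|x_1']=[x_1]+[x_1']-[x_1x_1']$ with the cycle $[x_2]\wedge\dots\wedge[x_i]$) are precisely the details being left to the reader. No gaps; this matches the paper's approach.
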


\subsection{The Bockstein homomorphism}

If $\ell$ is a prime number and $\zeta\in G$ such that $\zeta^\ell=1$, then for each nonnegative integer $s$ we define
an element of $B_{2s}(G;k)$ given by the formula
\begin{equation*}
[\zeta]^{(s)}=\sum_{i_1,...,i_s=1}^{\ell-1}[\zeta^{i_1}|\zeta|\zeta^{i_2}|\zeta|...|\zeta^{i_s}|\zeta]
\end{equation*}
where $[\zeta]^{(0)}=[\ ]$ is the generator of $\mathcal B_0(G;k)$. By an inductive argument we can verify that
\begin{equation}\label{dividedp}
[\zeta]^{(s)}\wedge[\zeta]^{(i)}={s+i\choose i}[\zeta]^{(s+i)}
\end{equation}
for all nonnegative integers $s,i$. Again by an inductive argument using \eqref{Leibniz} and \eqref{dividedp} we can
verify the formula
\begin{equation}\label{bockstein}
\partial([\zeta]^{(s)})=\ell[\zeta]^{(s-1)}\wedge[\zeta]
\end{equation}
for all positive integers $s$. In this context, recall
\cite{MR1867354}*{p. 303} that the short exact sequence of chain
complexes
\begin{equation}\label{ses}
0\to\mathcal B_*(G;\mathbb Z/\ell)\xrightarrow{\times \ell}\mathcal B_*(G;\mathbb Z/\ell^2)\to \mathcal B_*(G;\mathbb
Z/\ell)\to 0
\end{equation}
associated with the multiplication by $\ell$ map induces a homology long exact sequence
$$
...\to H_i(G;\mathbb Z/\ell)\to H_i(G;\mathbb Z/\ell^2)\to H_i(G;\mathbb Z/\ell)\xrightarrow{\beta} H_{i-1}(G;\mathbb
Z/\ell)\to...
$$
where $\beta$ is the Bockstein homomorphism. In particular, if $\mathbb F_\ell$ denotes the field of order $\ell$ then
by a diagram chasing using \eqref{ses} and \eqref{bockstein} we obtain the following
\begin{lemma}\label{b}
If $\zeta\in G$ such that $\zeta^\ell=1$ and $s$ is a positive
integer, then $[\zeta]^{(s)}$ is a cycle representing a homology
class $\omega\in H_{2s}(G;\mathbb F_\ell)$ such that
$[\zeta]^{(s-1)}\wedge[\zeta]$ is a cycle representing the class
$\beta(\omega)\in H_{2s-1}(G;\mathbb F_\ell)$.
\end{lemma}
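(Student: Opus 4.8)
The plan is to prove Lemma~\ref{b} by a standard Bockstein diagram chase, feeding in the two explicit bar-complex formulas \eqref{dividedp} and \eqref{bockstein} that were just established. First I would recall the mechanism of the connecting homomorphism for the coefficient sequence \eqref{ses}: given a class $\omega\in H_{2s}(G;\mathbb F_\ell)$ represented by a cycle $z\in\mathcal B_{2s}(G;\mathbb Z/\ell)$, one lifts $z$ to an arbitrary chain $\tilde z\in\mathcal B_{2s}(G;\mathbb Z/\ell^2)$, applies $\partial$, observes that $\partial\tilde z$ is divisible by $\ell$ (since $z$ was a cycle), and then $\beta(\omega)$ is represented by the cycle $\tfrac{1}{\ell}\partial\tilde z\in\mathcal B_{2s-1}(G;\mathbb Z/\ell)$.

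Next I would apply this to $z=[\zeta]^{(s)}$. The first point to check is that $[\zeta]^{(s)}$ really is a cycle mod $\ell$: formula \eqref{bockstein} gives $\partial([\zeta]^{(s)})=\ell\,[\zeta]^{(s-1)}\wedge[\zeta]$, which is visibly $\equiv 0 \pmod\ell$, so $[\zeta]^{(s)}$ is a cycle in $\mathcal B_{2s}(G;\mathbb F_\ell)$ and defines a class $\omega$. Now take the tautological lift: view the \emph{same} formula $[\zeta]^{(s)}=\sum_{i_1,\dots,i_s}[\zeta^{i_1}|\zeta|\cdots|\zeta^{i_s}|\zeta]$ as a chain $\tilde z$ over $\mathbb Z/\ell^2$. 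Then $\partial\tilde z=\ell\,[\zeta]^{(s-1)}\wedge[\zeta]$ holds over $\mathbb Z/\ell^2$ as well (the identity \eqref{bockstein} was derived purely formally from \eqref{Leibniz} and \eqref{dividedp}, hence is valid with $\mathbb Z$-coefficients and a fortiori mod $\ell^2$). Dividing by $\ell$ and reducing mod $\ell$ shows that $\beta(\omega)$ is represented by $[\zeta]^{(s-1)}\wedge[\zeta]$, which is what we want. It remains only to note that $[\zeta]^{(s-1)}\wedge[\zeta]$ is itself a cycle: either remark that by exactness of the long sequence $\beta(\omega)$ is automatically a cycle class, or verify it directly from \eqref{Leibniz} and \eqref{bockstein}, since $\partial([\zeta]^{(s-1)}\wedge[\zeta]) = (\partial[\zeta]^{(s-1)})\wedge[\zeta] \pm [\zeta]^{(s-1)}\wedge(\partial[\zeta]) = \ell\,[\zeta]^{(s-2)}\wedge[\zeta]\wedge[\zeta] \equiv 0\pmod\ell$ using $\partial[\zeta]=0$ (as $\zeta^\ell=1$ means $[\zeta]$ is a $1$-cycle when $\ell\mid$ order, but more simply $\partial[\zeta]=[\,]-[\,]=0$ in the normalized complex up to the convention).

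The only genuinely delicate point is bookkeeping: making sure the coefficient reductions and the division-by-$\ell$ are applied to the correct representatives, and that the sign conventions in \eqref{shuffle} and \eqref{Leibniz} are tracked consistently so that no stray sign appears in front of $[\zeta]^{(s-1)}\wedge[\zeta]$. Since \eqref{bockstein} already packages that sign bookkeeping into a clean statement, I expect this to be routine rather than an obstacle; the lemma is essentially a direct corollary of \eqref{bockstein} together with the definition of $\beta$. I would therefore keep the proof to a few lines: cite \cite{MR1867354}*{p.~303} for the description of $\beta$ via \eqref{ses}, invoke \eqref{bockstein} to identify $\tfrac1\ell\partial[\zeta]^{(s)}$ with $[\zeta]^{(s-1)}\wedge[\zeta]$, and conclude.
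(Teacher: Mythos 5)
Your proposal is correct and is exactly the argument the paper intends: the paper disposes of Lemma~\ref{b} with the single remark that it follows ``by a diagram chasing using \eqref{ses} and \eqref{bockstein},'' and your write-up is precisely that chase (tautological lift to $\mathbb Z/\ell^2$ coefficients, apply $\partial$, use $\partial[\zeta]^{(s)}=\ell\,[\zeta]^{(s-1)}\wedge[\zeta]$, divide by $\ell$ and reduce). The only difference is that you spell out the details the paper leaves implicit, which is fine.
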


\subsection{The Pontryagin ring}
If $G$ is an abelian group then, according to \eqref{Leibniz},
$\mathcal B(G;k)$ is a differential graded algebra with respect to
the shuffle product \eqref{shuffle} inducing a graded algebra
structure on homology $H_*(G;k)$. If $\empty_\ell G$ denotes the
$\ell$-torsion subgroup of $G$ and $\Gamma(\empty_\ell G)$ the
algebra of divided powers \cite{Brown}*{p. 119} over $\mathbb
F_\ell$ generated in degree two by $\empty_\ell G$, then the
homomorphism of graded algebras
\begin{equation}\label{gh1}
\Gamma(\empty_\ell G)\to H_*(G;\mathbb F_\ell)
\end{equation}
sending each element $\zeta$ of $\empty_\ell G$ to the class of $[\zeta]^{(1)}$ in $H_2(G;\mathbb F_\ell)$ is well
defined according to \eqref{dividedp}. Similarly, if $\Lambda (G\otimes\mathbb Z/\ell)$ denotes the exterior algebra
over $\mathbb F_\ell$ generated in degree one by $G\otimes\mathbb Z/\ell$ then the homomorphism of graded algebras
\begin{equation}\label{eh2}
\Lambda(G\otimes\mathbb Z/\ell)\to H_*(G;\mathbb F_\ell)
\end{equation}
sending each element $g\otimes 1$ of $G\otimes\mathbb Z/\ell$ to the class of $[g]$ in $H_1(G;\mathbb F_\ell)$ is also
well defined according to Lemma \ref{exteriorg}.

\begin{proposition}[\cite{Brown} p. 126]\label{homology}
If $\ell$ is a prime number and $G$ is an abelian group, then the maps \eqref{gh1} and \eqref{eh2} induce an
isomorphism of graded algebras
$$
\Gamma(\empty_\ell G)\otimes\Lambda(G\otimes\mathbb Z/\ell)\approx H_*(G;\mathbb F_\ell).
$$
\end{proposition}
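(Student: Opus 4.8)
The plan is to prove the isomorphism
$$\Gamma(\empty_\ell G)\otimes\Lambda(G\otimes\mathbb Z/\ell)\approx H_*(G;\mathbb F_\ell)$$
by reducing to finitely generated $G$ and then to cyclic $G$, using the Künneth theorem together with the fact that all three functors (the two algebras on the left and group homology of an abelian group) convert direct sums of groups into graded tensor products. First I would check that the map built from \eqref{gh1} and \eqref{eh2} is a well-defined homomorphism of graded $\mathbb F_\ell$-algebras: the target $H_*(G;\mathbb F_\ell)$ is graded-commutative because $G$ is abelian (by \eqref{Leibniz} the shuffle product descends to homology), and the relations defining the divided power algebra and the exterior algebra are satisfied by \eqref{dividedp} and Lemma \ref{exteriorg} respectively, so the two algebra maps glue to a single map out of the tensor product. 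The content of the proposition is that this particular map is an isomorphism.

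Next I would reduce to the case where $G$ is finitely generated. Group homology commutes with filtered colimits, and $G$ is the filtered colimit of its finitely generated subgroups; since $\Gamma$ and $\Lambda$ also commute with filtered colimits and the comparison map is natural in $G$, it suffices to treat $G$ finitely generated. Such a $G$ is a finite direct sum of cyclic groups. Both sides are monoidal: $\Gamma(\empty_\ell(G\oplus G'))\cong\Gamma(\empty_\ell G)\otimes\Gamma(\empty_\ell G')$ and similarly for $\Lambda$, while the Künneth theorem over the field $\mathbb F_\ell$ gives $H_*(G\oplus G';\mathbb F_\ell)\cong H_*(G;\mathbb F_\ell)\otimes H_*(G';\mathbb F_\ell)$ as graded algebras (the shuffle product is compatible with the Künneth isomorphism). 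One then checks the comparison map respects these decompositions, so the general finitely generated case follows from the cyclic case.

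For $G=\mathbb Z$ one has $H_0=\mathbb F_\ell$, $H_1=\mathbb F_\ell$ generated by $[g]$ for a generator $g$, and $H_i=0$ for $i\ge2$; since $\empty_\ell\mathbb Z=0$ and $\mathbb Z\otimes\mathbb Z/\ell=\mathbb Z/\ell$, the right-hand side is $\Lambda(\mathbb Z/\ell)$, and the map is visibly an isomorphism. For $G=\mathbb Z/m$ one distinguishes $\ell\nmid m$ (where the homology is trivial in positive degrees, $\empty_\ell G=0$, $G\otimes\mathbb Z/\ell=0$, and both sides are just $\mathbb F_\ell$) from $\ell\mid m$. In the latter case $\empty_\ell G\cong\mathbb Z/\ell$ and $G\otimes\mathbb Z/\ell\cong\mathbb Z/\ell$, so the predicted answer is $\Gamma(\mathbb Z/\ell)\otimes\Lambda(\mathbb Z/\ell)$, which has dimension $1$ in each degree. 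One verifies the classical computation $H_i(\mathbb Z/m;\mathbb F_\ell)=\mathbb F_\ell$ for all $i\ge0$ when $\ell\mid m$ using the standard periodic resolution, and then identifies the generators: Lemma \ref{b} shows that $[\zeta]^{(s)}$ is a nonzero cycle in degree $2s$ (it is the image of a generator of $H_{2s}$, as one checks against the Bockstein, which is nontrivial exactly when $\ell\mid m$ but $\ell^2\nmid m$, and via the multiplication-by-$\ell$ Bockstein in general), and $[\zeta]^{(s-1)}\wedge[\zeta]$ is a nonzero cycle in degree $2s-1$. Hence the comparison map sends a basis to a basis in every degree and is an isomorphism.

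The main obstacle is the cyclic case $G=\mathbb Z/m$ with $\ell\mid m$: one must not only know the additive structure of $H_*(\mathbb Z/m;\mathbb F_\ell)$ but also verify that the explicit bar cycles $[\zeta]^{(s)}$ and $[\zeta]^{(s-1)}\wedge[\zeta]$ represent \emph{nonzero} classes, i.e.\ that the algebra map hits generators rather than landing in the wrong filtration. This is exactly where formulas \eqref{dividedp}, \eqref{bockstein}, and Lemma \ref{b} do the work — \eqref{bockstein} together with the long exact sequence \eqref{ses} pins down $[\zeta]^{(s)}$ as a generator by comparing it across the Bockstein to the degree-one class $[\zeta]$, whose nontriviality is elementary. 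Once the cyclic case is settled with explicit generators, the Künneth/colimit formalism propagates it to all abelian $G$ with no further difficulty.
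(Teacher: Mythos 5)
The paper gives no proof of this proposition (it is quoted from Brown, p.~126), so your sketch has to stand on its own; its outer layers do follow the standard route (filtered colimits, the structure theorem, compatibility of the shuffle product with the K\"unneth isomorphism, and a dimension count in the cyclic case). The gap is exactly at the point you yourself identify as the crux, and your proposed tool does not close it. Lemma \ref{b} only says that $\beta$ carries the class of $[\zeta]^{(s)}$ in degree $2s$ to the class of $[\zeta]^{(s-1)}\wedge[\zeta]$ in degree $2s-1$; it provides no way to descend any further. For a cyclic $\ell$-group the mod~$\ell$ Bockstein vanishes on the odd-degree homology groups in question, so it cannot be applied again, and knowing inductively that $[\zeta]^{(s-1)}$ is nonzero does not give $[\zeta]^{(s-1)}\wedge[\zeta]\neq 0$: that would require knowing the Pontryagin pairing $H_{2s-2}\otimes H_1\to H_{2s-1}$ is nontrivial on these classes, which is part of what is being proved. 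So the phrase ``pins down $[\zeta]^{(s)}$ as a generator by comparing it across the Bockstein to the degree-one class $[\zeta]$'' is not an argument for $s\ge 2$: neither end of the comparison is known to be nonzero at that stage, and the induction never starts past $s=1$.

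There is also an outright error in the case $\ell^2\mid m$: there $\zeta=g^{m/\ell}$ maps to $0$ in $H_1(G;\mathbb F_\ell)\cong G\otimes\mathbb Z/\ell$, so $[\zeta]$ is a boundary, and since $G$ is abelian the Leibniz formula \eqref{Leibniz} makes $[\zeta]^{(s-1)}\wedge[\zeta]$ a boundary as well; it is therefore \emph{not} a generator of $H_{2s-1}$ (consistently, $\beta$ kills $[\zeta]^{(s)}$ in this case). The odd-degree generator you must hit is $[\zeta]^{(s-1)}\wedge[g]$, the image of $\gamma_{s-1}(\zeta)\otimes(g\otimes 1)$ under \eqref{gh1} and \eqref{eh2}. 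To repair the cyclic case you need an input beyond the Bockstein: the standard one is the explicit chain map from the $2$-periodic resolution of $\mathbb Z/m$ to the bar resolution, under which $[\zeta]^{(s)}$ and $[\zeta]^{(s)}\wedge[g]$ are literally the images of the canonical generators in degrees $2s$ and $2s+1$; alternatively, evaluate explicit cocycles for the known cohomology ring of $\mathbb Z/m$ against these cycles, or run an induction on $s$ using the deconcatenation coproduct of the bar complex, which does reduce nontriviality of $[\zeta]^{(s)}$ to that of $[\zeta]^{(1)}$. Without one of these ingredients the cyclic case, and hence the proposition, is not established.
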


If $G_1$, $G_2$ are two groups then the K\"unneth isomorphism
\cite{MR1731415}*{p. 218}
\begin{equation}\label{k}
\kappa:H_*(G_1;\mathbb F_\ell)\otimes H_*(G_2;\mathbb F_\ell)\xrightarrow{\approx}H_*(G_1\times G_2;\mathbb F_\ell)
\end{equation}
is induced by the map sending
$$
[x_1|...|x_i]\otimes[x_{i+1}|...|x_{i+s}]\mapsto [x_1\times1|...|x_i\times1]\wedge[1\times x_{i+1}|...|1\times
x_{i+s}]$$ where $x_j$ is an element of $G_1$ for $j\le i$ and an element of $G_2$ for $j>i$. In particular, if both
$G_1$ and $G_2$ are abelian, then $\kappa$ is a graded algebra isomorphism with respect to the product
$$
(a_1\otimes b_1)(a_2\otimes b_2)=(-1)^{|b_1||a_2|}(a_1\wedge a_2)\otimes (b_1\wedge b_2)
$$
defined for homogeneous elements $a_i,b_i\in H_*(G_i;\mathbb F_\ell)$ of degrees $|a_i|$ and $|b_i|$ for $i=1,2$.

\begin{remark}\label{pontryagin} If $G$ is an abelian group and $\mu:G\times G\to G$ is its group law homomorphism, then the composition between the
induced homomorphism
$$
\mu_*:H_*(G\times G;\mathbb F_\ell)\to H_*(G;\mathbb F_\ell)
$$
and the K\"unneth isomorphism \eqref{k} for $G_1=G_2=G$ defines a product on $H_*(G;\mathbb F_\ell)$ that can be easily
checked to be induced by the shuffle product. In this case, $H_*(G;\mathbb F_\ell)$ is called the \emph{Pontryagin
ring} and its structure is given by Proposition \ref{homology}.
\end{remark}

\section{Algebras of homological symbols}\label{algebra}

Let $k$ be a fixed commutative ring with identity.

\subsection{Algebras of symbols}
If $A=\bigoplus_{i,n=0}^\infty A_{in}$ is an associative bi-graded
$k$-algebra, denote by
\begin{equation}\label{rankn}
A_{*n}:=\bigoplus_{i=0}^\infty A_{in}\subset A
\end{equation}
the $k$-submodule of all elements with the second degree $n$. Also, let
\begin{equation}\label{can}
q:T(A_{*1})=\bigoplus_{n=0}^\infty A_{*1}^{\otimes n}\to A
\end{equation}
be the canonical bi-graded algebra homomorphism where $T(A_{*1})$ is the bi-graded tensor $k$-algebra generated by the
$k$-submodule $A_{*1}\subset A$. Here $\otimes n$ denotes the $n$-fold graded tensor product over $k$.

\begin{definition}\label{s}
The \emph{algebra of symbols} associated with an associative
bi-graded $k$-algebra $A=\bigoplus_{i,n=0}^\infty A_{in}$ is the
quotient bi-graded algebra
$$
KA:=T(A_{*1})/\ker q
$$
with respect to the kernel of the canonical homomorphism \eqref{can}.
\end{definition}

\begin{definition}\label{quadratic} An associative bi-graded $k$-algebra $A=\bigoplus_{i,n=0}^\infty A_{in}$
is \emph{quadratic} with respect to the second degree if the canonical homomorphism \eqref{can} is surjective and its
kernel can be generated as a two-sided ideal by a subset of $A_{*1}^{\otimes2}$.
\end{definition}

According with the above definitions the algebra of symbols $KA$
associated with an associative bi-graded $k$-algebra $A$ comes
with a \emph{natural} bi-graded algebra monomorphism
\begin{equation}\label{mono}
q':KA\hookrightarrow A
\end{equation}
Some questions of interest will be to study when $q'$ is an isomorphism and when $KA$ is a quadratic algebra with
respect to the second degree.

\subsection{Graded $H$-spaces}
We say that a topological space $X=\bigsqcup_{n=0}^\infty X_n$ decomposed into a disjoint union of non-empty open
subspaces $X_n\subset X$ is a \emph{graded} $H$-\emph{space} if there is a continuous map $h:X\times X\to X$  with
$$h(X_n\times X_m)\subset X_{n+m}\text{ for all }n,m\ge0$$ such that $X$ is an associative $H$-space relative $h$ in the sense
of \cite{MR1867354}*{p. 281} with the homotopy unit in $X_0$.  A
continuous map between graded $H$-spaces
$$
f:X=\bigsqcup_{n=0}^\infty X_n\to Y=\bigsqcup_{n=0}^\infty Y_n
$$
is a \emph{graded} $H$-\emph{map} if $f(X_n)\subset Y_n$ for all
$n\ge0$ and $f$ is an $H$-map.
\begin{definition}\label{hs}
The \emph{$k$-algebra of homological symbols} associated with a graded $H$-space $X=\bigsqcup_{n=0}^\infty X_n$ is the
algebra of symbols $KH_*(X;k)$ associated in the sense of the Definition \ref{s} with the bi-graded $k$-algebra
$$H_*(X;k)\approx\bigoplus_{i,n=0}^\infty H_i(X_n;k)$$
where $H_*(\ \ ;k)$ is the singular homology functor with $k$-coefficients.\end{definition}

In the above definition, the bi-graded algebra structure on $H_*(X;k)$ is induced from the graded $H$-structure on $X$
via the K\"unneth homomorphisms
$$
H_*(X_n;k)\otimes H_*(X_m;k)\to H_*(X_n\times X_m;k)
$$
and the assignment $X\mapsto KH_*(X;k)$ is obviously natural with respect to graded $H$-maps. Also we have a natural
monomorphism
\begin{equation}\label{monoX}
q':KH_*(X;k)\hookrightarrow H_*(X;k).
\end{equation}
given by \eqref{mono} applied to $A=H_*(X;k)$.

\begin{notation}\label{n} For the rest of this article, if not otherwise stated, we
fix $\ell:=2r+1$ a \emph{regular} odd prime number, $\xi$ is a
primitive $\ell$-root of unity, and $R:=\mathbb
Z[\frac{1}{\ell},\xi]$ the ring of cyclotomic $\ell$-integers.
Also $GL_n$, $SL_n$ will denote the groups of matrices over $R$ as
defined in the Introduction.
\end{notation}

\section{The main examples}
In this article we are concerned with examples of algebras of homological symbols arising from linear groups.

\subsection{Approximations to $BGL_n$}
The mod $\ell$ homology of the group $GL_n$ is naturally
isomorphic to the singular mod $\ell$ homology of its classifying
space $BGL_n$. The classifying space $BGL_n$ can be approximated
by the classifying space $BGL_1^{\times n}$ of the $n$-fold direct
product $GL_1^{\times n}$ and by a topological space $BGL_n^{et}$
called the \emph{\'etale model at $\ell$}, defined in
\cite{MR1259512}*{p. 3}. These spaces are connected by natural
continuous maps
\begin{equation}\label{approx}
BGL_1^{\times n}\xrightarrow{\iota_n}BGL_n\xrightarrow{f_n}BGL_n^{\acute{e}t}
\end{equation}
where $\iota_n$ is the classifying space map induced by the
canonical inclusion $GL_1^{\times n}\subset GL_n$ and $f_n$ is a
map defined in \cite{MR1259512}*{p. 3}. By taking the disjoint
union of the diagrams \eqref{approx} we obtain a diagram of
topological spaces and continuous maps
\begin{equation}\label{hstructure}
X:=\bigsqcup_{n=0}^{\infty}BGL_1^{\times
n}\xrightarrow{\iota}Y:=\bigsqcup_{n=0}^{\infty}BGL_n\xrightarrow{f}Z:=\bigsqcup_{n=0}^{\infty}BGL_n^{\acute{e}t}
\end{equation}
such that each disjoint union has a graded $H$-space structure induced by the matrix block-multiplication and the maps
$\iota=\sqcup\iota_n$ and $f=\sqcup f_n$ are graded $H$-maps. On mod $\ell$ homology, the diagram \eqref{hstructure}
induces a commutative diagram of bi-graded algebras and homomorphisms
\begin{equation}\label{hdiagram}
\begin{CD}KH_*(X;\mathbb F_\ell)@>K\iota_*>> KH_*(Y;\mathbb F_{\ell})@>Kf_*>> KH_*(Z;\mathbb F_{\ell})\\
@VVq_1V @VVq_2V @VVq_3V\\
H_*(X;\mathbb F_\ell)@>\iota_*>> H_*(Y;\mathbb F_{\ell})@>f_*>> H_*(Z;\mathbb F_{\ell})
\end{CD}
\end{equation}
where the algebras of homological symbols in the first row are
given by the Definition \ref{hs} and the monomorphisms $q_i$ are
are given by \eqref{monoX} for $i=1,2,3$. The second row of the
diagram \eqref{hdiagram} can written as a diagram of bi-graded
algebras
\begin{equation}\label{bigrade}
T:=\bigoplus_{i,n=0}^{\infty}T_{i,n}\xrightarrow{\iota_*}A:=\bigoplus_{i,n=0}^{\infty}A_{i,n}\xrightarrow{f_*}
A^{\acute{e}t}:=\bigoplus_{i,n=0}^{\infty}A^{\acute{e}t}_{i,n}
\end{equation}
where for each bi-degree $(i,n)$, we define
\begin{equation*}
T_{i,n}:=H_i(GL_1^{\times n};\mathbb F_\ell),\ \
A_{i,n}:=H_i(GL_n;\mathbb F_\ell),\ \
A_{i,n}^{\acute{e}t}:=H_i(BGL_n^{\acute{e}t};\mathbb F_\ell).
\end{equation*}
The first degree $i$ is called the \emph{homological degree} and the second degree $n$ is called the \emph{rank}.

\begin{theorem}[\cite{MR1259512}*{Lemma 6.2}]\label{onto}
The composed homomorphism $f_*\circ\iota_*$ in the diagram
\eqref{bigrade} is surjective.
\end{theorem}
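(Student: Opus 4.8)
**Proof proposal for Theorem \ref{onto} ($f_* \circ \iota_*$ is surjective).**

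The plan is to reduce the claim to a statement about the étale model $BGL_n^{\acute{e}t}$ and the image of the diagonal torus, using the key feature that the étale model is constructed precisely so that its mod $\ell$ cohomology is computed by a spectral sequence whose input is controlled by the normalizer of a maximal torus. Concretely, recall from \cite{MR1259512} that $BGL_n^{\acute{e}t}$ is built from the $\ell$-adic homotopy type of the algebraic $K$-theory space, and that there is a model in which $BGL_n^{\acute{e}t}$ receives a map from $BT_n^{\acute{e}t}$, where $T_n = GL_1^{\times n}$ is the diagonal torus, factoring $f_n$ through the étale analogue of $\iota_n$. The first step is therefore to record the commutative square relating $\iota_n$, $f_n$, and the torus-level étale comparison map, so that $f_n \circ \iota_n$ is identified (up to the relevant homology) with the composite $BGL_1^{\times n} \to BGL_1^{\times n, \acute{e}t} \to BGL_n^{\acute{e}t}$.

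Next I would invoke the computation of $H^*(BGL_n^{\acute{e}t}; \mathbb{F}_\ell)$ from \cite{MR1259512}: for $\ell$ a regular odd prime and $R = \mathbb{Z}[\tfrac{1}{\ell}, \xi]$, the étale cohomology ring is the ring of invariants $H^*(BT_n^{\acute{e}t}; \mathbb{F}_\ell)^{W_n}$ under the Weyl group $W_n = \Sigma_n$, together with the extra polynomial/exterior classes coming from the $\ell$-torsion in $R^\times$ detected by the $\mu_\ell$-summand; in other words, $BGL_n^{\acute{e}t}$ has the mod $\ell$ cohomology of the homotopy fixed points, and crucially this cohomology is \emph{detected on the torus} $BT_n^{\acute{e}t}$. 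Dually, on mod $\ell$ homology, $H_*(BGL_n^{\acute{e}t}; \mathbb{F}_\ell)$ is \emph{generated} by the image of $H_*(BT_n^{\acute{e}t}; \mathbb{F}_\ell)$ under the étale torus-inclusion. Passing back through the square of the previous step, the image of $f_* \circ \iota_*$ contains the image of $H_*(BT_n^{\acute{e}t}; \mathbb{F}_\ell)$, which is all of $H_*(BGL_n^{\acute{e}t}; \mathbb{F}_\ell)$.

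The one genuine subtlety — the main obstacle — is that "detection on the torus in cohomology" translates into "surjectivity from the torus in homology" only when the relevant cohomology groups are finite-dimensional and the pairing is perfect, and more importantly that the torus-level class we need is the image of an \emph{integral} (or $\mathbb{F}_\ell$) homology class of $BGL_1^{\times n}$, not merely of its étale model; so I must check that the étale comparison map $BGL_1^{\times n} \to BGL_1^{\times n,\acute{e}t}$ is itself surjective on mod $\ell$ homology. This is exactly the rank-one case: $BGL_1^{\acute{e}t}$ has the mod $\ell$ homotopy type governed by $K(R)$ in low degrees, and the Quillen–Lichtenbaum-type identification (valid for regular $\ell$, which is where regularity is used) shows $BGL_1 \to BGL_1^{\acute{e}t}$ is a mod $\ell$ homology surjection; then one tensors up over $n$ factors via the Künneth isomorphism \eqref{k}, which is compatible with the graded $H$-space structure. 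Assembling: $H_*(BGL_1^{\times n}) \twoheadrightarrow H_*(BGL_1^{\times n,\acute{e}t}) \twoheadrightarrow H_*(BGL_n^{\acute{e}t})$, and the composite is $f_* \circ \iota_*$, proving surjectivity.

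Finally, I would remark that this is essentially the content of \cite{MR1259512}*{Lemma 6.2} and that the diagram \eqref{bigrade} packages the statement uniformly across all ranks $n$; no new input beyond the cited étale computations and the Künneth formula is required, the role of the regular-prime hypothesis being confined to the rank-one surjectivity step above.
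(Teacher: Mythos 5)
The paper gives no argument for Theorem \ref{onto}: it is imported wholesale from \cite{MR1259512}*{Lemma 6.2}, so the only question is whether your sketch stands on its own as a proof. It does not. The parts you actually argue --- the factorization of $f_*\circ\iota_*$ through the \'etale model of the diagonal torus, the K\"unneth reduction to rank one, and the rank-one surjectivity (which is really Kummer theory plus the vanishing of $H^2_{\acute{e}t}(\mathrm{Spec}\,R;\mathbb Z/\ell)$ for $\ell$ regular, i.e. essentially \cite{MR1259512}*{Proposition 5.2}, rather than a ``Quillen--Lichtenbaum-type identification'') --- are the easy half. The entire content of the statement is the other half, namely that $H_*(BGL_1^{\times n,\acute{e}t};\mathbb F_\ell)\to H_*(BGL_n^{\acute{e}t};\mathbb F_\ell)$ is onto, equivalently that $H^*(BGL_n^{\acute{e}t};\mathbb F_\ell)$ is detected on the torus model. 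This you do not prove: you ``recall'' it from \cite{MR1259512}, which is circular, since modulo the rank-one identification that detection statement \emph{is} Lemma 6.2.

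Moreover, the justification you offer for the detection step is false as stated: $H^*(BGL_n^{\acute{e}t};\mathbb F_\ell)$ is not the ring of $\Sigma_n$-invariants of $H^*((BGL_1^{\acute{e}t})^{\times n};\mathbb F_\ell)$, with or without ``extra classes''. The descent spectral sequence over the \'etale homotopy type of $\mathrm{Spec}\,R$ collapses (for $\ell$ regular the base has mod $\ell$ cohomology only in degrees $0$ and $1$), so additively $H^*(BGL_n^{\acute{e}t};\mathbb F_\ell)\cong H^*_{\acute{e}t}(\mathrm{Spec}\,R;\mathbb F_\ell)\otimes\mathbb F_\ell[c_1,\dots,c_n]$: the base cohomology occurs \emph{once}, whereas the invariants of the $n$-fold tensor power contain it $n$ times. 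Already for $n=2$ in cohomological degree $2$ the invariants have dimension $1+{r+1\choose 2}\ge 2$, while $H^2(BGL_2^{\acute{e}t};\mathbb F_\ell)$ is one-dimensional, spanned by $c_1$. So detection on the torus is a genuine theorem, proved in \cite{MR1259512} by computing the restriction of these classes to the diagonal and checking injectivity (and refined in \cite{MR1973992}, whose kernel computation is Theorem \ref{kernel} here); it is not a formal consequence of an invariant-theoretic description, and your argument begs the question at exactly this point. To repair the write-up, either reproduce that restriction computation or do what the paper does and cite \cite{MR1259512}*{Lemma 6.2} directly.
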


The rank $n$ elements of $T$ form the linear subspace $T_{*n}\subset T$  (see \eqref{rankn}) such that:
\begin{eqnarray*}
T_{*n}=H_*(GL_1^{\times n};\mathbb F_\ell)\approx H_*(GL_1;\mathbb F_\ell)^{\otimes n}
\end{eqnarray*}
by the K\"unneth isomorphism. In particular, $$T=H_*(X;\mathbb
F_\ell)\approx T(H_*(GL_1;\mathbb F_\ell))$$ is the tensor algebra
generated by $T_{*1}=H_*(GL_1;\mathbb F_\ell)$ and thus, $q_1$ in
\eqref{hdiagram} is an isomorphism. From Theorem \ref{onto} we
deduce the following

\begin{corollary}\label{q3}
The monomorphism $q_3$ in the diagram \eqref{hdiagram} is an isomorphism.
\end{corollary}

\subsection{\'Etale obstruction classes}\label{eoc}
To describe the kernel of $f_*\circ\iota_*$ we observe that
according to \cite{MR718674} the group of units $GL_1$ of the ring
$R$ is the abelian group generated by the set of cyclotomic units
\begin{equation}\label{cunits}
\{-\xi, 1-\xi, 1-\xi^2,...,1-\xi^r\}
\end{equation}
subject to the relation $(-\xi)^{2\ell}=1$. By applying Proposition \ref{homology} to $GL_1$, we deduce that $T_{*1}$
is a vector space over $\mathbb F_\ell$ with basis the set of homology classes represented by cycles of the form
\begin{equation}\label{cycles}
[\xi]^{(s)}\wedge\langle v_1,...,v_i\rangle
\end{equation}
where $s$ runs over all nonnegative integers and $\{v_1,...,v_i\}$
over all subsets of the set \eqref{cunits}. In this context, the
following definition is a slight modification of
\cite{MR1973992}*{p. 2336}:
\begin{definition}\label{eos}
A class $\epsilon\in T_{*1}$ represented by a cycle of the form
\eqref{cycles} is called an \emph{\'etale obstruction class} if
$s$ is a nonnegative integer and $\{v_1,...,v_i\}$ is a subset of
the set \eqref{cunits} of cardinality $i$ such that $i=s+2j$ for
some integer $j>0$.
\end{definition}

\begin{definition}\label{weight} A class $\omega\in T_{1*}$ represented by a
cycle of the form \eqref{cycles} is called a \emph{homogeneous class of weight} $\parallel\omega\parallel:=s+i$.
\end{definition}

\begin{remark}\label{e}
For each integer $i\ge 2$ let $e(i)$ denote the cardinality of the set of all integers $s\equiv i\mod2$ such that $0\le
s\le i-2$. Then the number $e$ of \'etale obstruction classes is finite and given by the formula
\begin{equation*}
e=\sum_{i=2}^{r+1}e(i){r+1\choose i}.
\end{equation*}
\end{remark}

The following group homomorphisms:
\begin{equation*}
GL_1\xrightarrow{t}GL_1^{\times2}\xleftarrow{\rho}GL_1^{\times 3}
\end{equation*}
given by the formulas
\begin{equation}\label{tro}
 t(u)=u^{-1}\times u,\ \rho(u\times v\times w)=uw\times vw,
\end{equation}
for $u,v,w\in GL_1$ induce homomorphisms on mod $\ell$ homology:
\begin{eqnarray*}
t_*:T_{*1}\to T_{*2},\ \rho_*:T_{*2}\otimes T_{*1}\approx T_{*3}\to T_{*2},
\end{eqnarray*}
where the source $T_{*3}$ of $\rho_*$ has been identified with
$T_{*2}\otimes T_{*1}$ via the K\"unneth isomorphism. With these
preparations, we have the following important result:

\begin{theorem}[\cite{MR1973992}]\label{kernel}
The kernel of the bi-graded algebra homomorphism:
$$f_*\circ\iota_*:T\to A^{\acute{e}t}$$
is the two-sided ideal of $T$ generated by the set of elements of the form:
\begin{equation}\label{eta}
\rho_*(t_*(\eta)\otimes z),
\end{equation}
where $\eta,z\in T_{*1}$ such that $\eta$ runs over all the
\'etale obstruction classes and the homogeneous classes of
\emph{odd} weight $\parallel\eta\parallel$, and $z$ runs over a
vector space basis for $T_{*1}$.
\end{theorem}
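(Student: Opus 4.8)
The plan is to compute $\ker(f_*\circ\iota_*)$ directly and to check that it coincides with the two-sided ideal $I\subseteq T$ generated by the elements \eqref{eta}. Put $V:=T_{*1}=H_*(GL_1;\mathbb F_\ell)$, so that $T=T(V)$ is free and $f_*\circ\iota_*$ is the unique bi-graded algebra homomorphism whose restriction to $V$ is $\phi_1:=f_{1*}\colon V\to A^{\acute{e}t}_{*1}$ (recall that $\iota_1$ is the identity of $BGL_1$). By Theorem~\ref{onto} the map $f_*\circ\iota_*$ is onto, so once the inclusion $I\subseteq\ker(f_*\circ\iota_*)$ is known we obtain an induced surjection $\bar\phi\colon T/I\to A^{\acute{e}t}$, and the theorem becomes the assertion that $\bar\phi$ is injective.

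\emph{The inclusion $I\subseteq\ker(f_*\circ\iota_*)$.} Since $f_*\circ\iota_*$ is a homomorphism it suffices to kill the generators $\rho_*(t_*(\eta)\otimes z)$. First, taking $z$ to be the unit of $H_0(GL_1;\mathbb F_\ell)$ one has $\rho_*(t_*(\eta)\otimes z)=t_*(\eta)$, because $\rho$ restricted to $GL_1^{\times2}\times\{1\}$ is the identity of $GL_1^{\times2}$. Second, for general $z$ the identity $\iota_2\circ\rho=m\circ(\iota_2\times\delta)$ holds, where $\delta\colon GL_1\to GL_2$, $w\mapsto wI_2$, is the central scalar and $m\colon GL_2\times\delta(GL_1)\to GL_2$ is multiplication (a homomorphism, since $\delta(GL_1)$ is central); by functoriality of the étale model this reduces us to proving $f_{2*}\iota_{2*}t_*(\eta)=0$ in $A^{\acute{e}t}_{*2}$. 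Now $\iota_2\circ t$ is the antidiagonal $u\mapsto\mathrm{diag}(u^{-1},u)$, which factors through $SL_2$ and identifies $GL_1$ with (the $R$-points of) a maximal torus of $SL_2$. On étale models the image of the torus lands in the $SL_2$-Weyl invariants, and the nontrivial Weyl element acts on $V$ as the inversion $u\mapsto u^{-1}$, hence by $(-1)^i$ on $\langle v_1,\dots,v_i\rangle$ and by $(-1)^s$ on $[\xi]^{(s)}$, that is by $(-1)^{\|\omega\|}$ on a homogeneous class $\omega$ of weight $\|\omega\|$. This already annihilates every homogeneous class of odd weight. For an étale obstruction class $\eta$ one then invokes the defining property of Definition~\ref{eos} from \cite{MR1973992}: because $\mathrm{Spec}\,R$ has mod~$\ell$ étale cohomological dimension $2$, among the even-weight classes the only ones surviving to $H_*(BSL_2^{\acute{e}t};\mathbb F_\ell)$ (the étale model of $BSL_2$) are those with $i\le s$, so the obstruction classes ($i>s$) are killed as well. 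Hence $I\subseteq\ker(f_*\circ\iota_*)$.

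\emph{The inclusion $\ker(f_*\circ\iota_*)\subseteq I$.} This is the substantial part, and I would prove injectivity of $\bar\phi$ by a bi-graded dimension count. For the target, Corollary~\ref{q3} gives $A^{\acute{e}t}=KA^{\acute{e}t}$, so $A^{\acute{e}t}$ is generated by $A^{\acute{e}t}_{*1}=H_*(BGL_1^{\acute{e}t};\mathbb F_\ell)$; combining this with the Dwyer--Friedlander description of $BGL_n^{\acute{e}t}$, the (known) fact that its mod~$\ell$ homology is free over the Chern class algebra for $\ell$ odd regular, and the vanishing $H^{\ge3}_{\acute{e}t}(\mathrm{Spec}\,R;\mathbb F_\ell)=0$, one obtains the bi-graded dimensions $\dim_{\mathbb F_\ell}A^{\acute{e}t}_{i,n}$ explicitly in each bidegree. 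For the source, since $I$ is generated in rank $2$, $(T/I)_{*n}$ is the quotient of $V^{\otimes n}$ by $\sum_{a+b=n-2}V^{\otimes a}\otimes W\otimes V^{\otimes b}$, where $W\subseteq V^{\otimes2}$ is the linear span of the generators \eqref{eta}; using the explicit basis \eqref{cycles} of $V$ together with the formulas for $t_*$ and $\rho_*$ to describe $W$ yields an upper bound for $\dim_{\mathbb F_\ell}(T/I)_{i,n}$. Checking that these two counts coincide in every bidegree $(i,n)$, together with surjectivity of $\bar\phi$, forces $\bar\phi$ to be an isomorphism, which is the theorem.

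The main obstacle is exactly this matching step: one must show the relations \eqref{eta} not only lie in the kernel but generate it, i.e. that the bi-graded Hilbert series of $T/I$ is nowhere strictly larger than that of $A^{\acute{e}t}$. This requires, on one side, a genuinely explicit additive model for $A^{\acute{e}t}$ — where the étale form of Quillen's conjecture and the cohomological dimension bound for $R$ are used — and, on the other side, tight control of the span $W\subseteq V^{\otimes2}$, whose description intertwines the divided-power classes $[\xi]^{(s)}$, the exterior classes on the cyclotomic units \eqref{cunits}, and the Weyl-and-scalar combinatorics built into $t_*$ and $\rho_*$. With both in hand the remaining verification is routine but lengthy, the finiteness of the number $e$ of obstruction classes (Remark~\ref{e}) being what confines the obstruction-type relations to a bounded range and keeps the count feasible.
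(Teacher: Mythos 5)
Your proposal does not actually prove the theorem; at both of its crucial points it defers to exactly the content that has to be established. For the inclusion $I\subseteq\ker(f_*\circ\iota_*)$, the odd-weight part of your argument (inner automorphism by the Weyl element, inversion acting by $(-1)^{\parallel\eta\parallel}$) is fine and is the same trick the paper uses elsewhere (Proposition \ref{reduce}). But for the \'etale obstruction classes you ``invoke the defining property of Definition \ref{eos}'': in this paper that definition is purely combinatorial ($i=s+2j$, $j>0$), and the assertion that precisely these classes die in $H_*(BGL_2^{\acute{e}t};\mathbb F_\ell)$ is not a definition but the substantive computation — an invariant-theoretic calculation with the mod $\ell$ \'etale cohomology of $\mathrm{Spec}\,R$ carried out in \cite{MR1973992}*{p. 2338}. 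Citing it as a known property of the definition is circular in the context of a proof of Theorem \ref{kernel}. (Your reduction of the general generator to $f_{2*}\iota_{2*}t_*(\eta)=0$ via the central scalar multiplication also silently uses that the Dwyer--Friedlander model is functorial and $H$-compatible for the homomorphism $GL_2\times GL_1\to GL_2$, which needs at least a sentence of justification.)

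For the hard inclusion $\ker(f_*\circ\iota_*)\subseteq I$ you only describe a strategy: match the bi-graded Hilbert series of $T/I$ against an explicit additive model of $A^{\acute{e}t}$. Neither side is computed — you give no formula for $\dim_{\mathbb F_\ell}A^{\acute{e}t}_{i,n}$ (the appeal to ``freeness over the Chern class algebra'' and $\mathrm{cd}_\ell(\mathrm{Spec}\,R)=2$ is left vague), and bounding $\dim(T/I)_{i,n}$ from above requires controlling the intersections of the subspaces $V^{\otimes a}\otimes W\otimes V^{\otimes b}$, i.e.\ a normal form or Gr\"obner-type argument for the quadratic ideal, which you do not supply. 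You say yourself this is ``the main obstacle,'' so the essential step is missing. For comparison, the paper does not reprove the theorem at all: it imports it from \cite{MR1973992}, remarking only that the statement is the homological translation, via Lemmas \ref{exteriorg} and \ref{b}, of the cohomological restriction-to-the-diagonal computation done there; your dimension-count route is genuinely different in spirit, but as written it is an outline with the decisive computations absent.
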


The proof of this theorem is a direct translation using Lemmas
\ref{exteriorg} and \ref{b} of the calculations made in
\cite{MR1973992}*{p. 2338}. Also, via the K\"unneth isomorphisms,
$T$ can be regarded as the tensor algebra on $T_{*1}$ and $T_{*1}$
can be identified via $f_*\circ \iota_*$ with
$A_{*1}^{\acute{e}t}$ (see \cite{MR1259512}*{Proposition 5.2}).
Thus, combining the Theorems \ref{onto} and \ref{kernel} we obtain
the structure of the bi-graded algebra $A^{\acute{e}t}$ as a
quadratic algebra with respect to the rank:

\begin{corollary}\label{qq}
The bi-graded algebra $A^{\acute{e}t}$ in \eqref{bigrade} is a
quadratic algebra with respect to the rank in the sense of the
Definition \ref{quadratic}.
\end{corollary}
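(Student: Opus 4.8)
The plan is to deduce the corollary directly from Theorems \ref{onto} and \ref{kernel} once the two identifications used implicitly in the discussion after Theorem \ref{kernel} are made explicit. First I would record that the K\"unneth isomorphism identifies $T$ with the tensor algebra $T(T_{*1})$ on $T_{*1}=H_*(GL_1;\mathbb F_\ell)$, and that $f_*\circ\iota_*$ restricts on $T_{*1}$ to an isomorphism $T_{*1}\xrightarrow{\approx}A_{*1}^{\acute{e}t}$ (by \cite{MR1259512}*{Proposition 5.2}). Transporting this identification of generating subspaces, the bi-graded algebra homomorphism $f_*\circ\iota_*\colon T\to A^{\acute{e}t}$ becomes an algebra homomorphism $T(A_{*1}^{\acute{e}t})\to A^{\acute{e}t}$ which is the canonical inclusion on the generating subspace $A_{*1}^{\acute{e}t}$; by the universal property of the tensor algebra it therefore coincides with the canonical homomorphism $q$ of \eqref{can} for $A=A^{\acute{e}t}$.

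With this in hand there are two points to verify in the sense of Definition \ref{quadratic}. Surjectivity of $q$ is exactly the content of Theorem \ref{onto}. For the second point I would invoke Theorem \ref{kernel}: $\ker(f_*\circ\iota_*)$ is the two-sided ideal of $T$ generated by the elements $\rho_*(t_*(\eta)\otimes z)$ with $\eta,z\in T_{*1}$ of the prescribed type. Since $t_*$ maps $T_{*1}$ into $T_{*2}$ and $\rho_*$ maps $T_{*2}\otimes T_{*1}\approx T_{*3}$ into $T_{*2}$, each such generator lies in $T_{*2}$; and under the K\"unneth identification $T_{*2}=T_{*1}^{\otimes 2}\approx (A_{*1}^{\acute{e}t})^{\otimes 2}$, so every generator of $\ker q$ sits in the rank-two tensor slot, which is precisely the requirement that the kernel be generated as a two-sided ideal by a subset of $(A_{*1}^{\acute{e}t})^{\otimes 2}$.

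Combining the two points, $q$ is surjective and $\ker q$ is generated as a two-sided ideal by a subset of $(A_{*1}^{\acute{e}t})^{\otimes 2}$, i.e. $A^{\acute{e}t}$ is quadratic with respect to the rank. I do not expect any genuine obstacle: the argument is pure bookkeeping, and the only thing to be careful about is that the homological-degree grading plays no role while the rank of $\rho_*(t_*(\eta)\otimes z)$ is genuinely $2$ — both of which are immediate from the explicit formulas \eqref{tro} for $t$ and $\rho$ and from the fact that $\iota$ and $f$ are graded $H$-maps, so that $\iota_*$ and $f_*$ preserve the rank grading.
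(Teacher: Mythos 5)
Your proposal is correct and follows essentially the same route as the paper: identify $T$ with the tensor algebra on $T_{*1}$ via the K\"unneth isomorphism, identify $T_{*1}$ with $A_{*1}^{\acute{e}t}$ via $f_*\circ\iota_*$ (citing the same Proposition 5.2 of Dwyer--Friedlander), and then combine Theorem \ref{onto} (surjectivity) with Theorem \ref{kernel} (the kernel is generated as a two-sided ideal by the rank-two elements $\rho_*(t_*(\eta)\otimes z)\in T_{*2}$). Your explicit appeal to the universal property of the tensor algebra to match $f_*\circ\iota_*$ with the canonical map $q$ of \eqref{can} merely spells out what the paper leaves implicit.
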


\begin{remark}\label{sq}
The homomorphism $\rho_*$ defines a graded module structure on $T_{*2}$ over the Pontryagin ring $T_{*1}$ (see Remark
\ref{pontryagin}). The Theorem \ref{kernel} says that the kernel of $f_*\circ\iota_*$ is generated as a two-sided ideal
by a submodule of $T_{*2}$ of finite rank $e$ over $T_{*1}$ modulo the classes \eqref{eta} with
$\parallel\eta\parallel$ odd, where $e$ is given by Remark \ref{e}.
\end{remark}

\section{The main conjecture}\label{conj}

\subsection{The statement}\label{statement} The maps in the diagram \eqref{hdiagram} have the following known properties:

(1) $K\iota_*$ and $Kf_*$ are surjective. This is immediate from the fact that $K\iota_*$ and $Kf_*$ are bijective in
rank $1$ and their targets are generated as algebras by rank 1 elements.

(2) $f_*$ is surjective but not an isomorphism. The first part
follows from the Theorem \ref{onto} while the last part was proven
in \cite{MR1851242}.

(3) $\iota_*$ is surjective \emph{if} the Quillen conjecture
\cite{MR0298694}*{p. 591} holds true for the ring $R$ and all the
ranks $n$. This fact was proven in \cite{MR1312569}*{p. 51}.

(4) $q_1$ and $q_3$ are isomorphisms. These facts follow from the Corollary \ref{q3} and its preceding proof.

(5) $q_2$ is an isomorphism \emph{if} $\iota_*$ is surjective. This follows from (4) by chasing the diagram
\eqref{hdiagram}.

(6) $f_*$ is an isomorphism \emph{if} $Kf_*$ is bijective and $\iota_*$ is surjective. This follows from (4) and (5) by
chasing the diagram \eqref{hdiagram}.

In this article we conjecture that:

\begin{conjecture}\label{cj} The map $Kf_*:KA\to KA^{\acute{e}t}\approx A^{\acute{e}t}$ in the diagram \eqref{hdiagram} is an isomorphism.\end{conjecture}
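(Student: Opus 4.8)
The plan is to prove Conjecture~\ref{cj} by a chain of reductions ending at a finite, completely explicit list of bar cycles in $SL_2$ which must be shown to bound. Everything happens in rank $2$ of \eqref{hdiagram}. Since $q_1$ and $q_3$ are isomorphisms (Corollary~\ref{q3} and the discussion preceding it), $K\iota_*$ and $Kf_*$ are surjective, and $KH_*(X;\mathbb F_\ell)=T$ is the tensor algebra on $T_{*1}=H_*(GL_1;\mathbb F_\ell)$, we may identify $KA=KH_*(Y;\mathbb F_\ell)\approx T/\ker(K\iota_*)$ and $KA^{\acute{e}t}\approx T/\ker(f_*\circ\iota_*)$, with $Kf_*$ the induced surjection. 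As $\ker(K\iota_*)\subseteq\ker(f_*\circ\iota_*)$ is automatic, $Kf_*$ is an isomorphism if and only if $\ker(f_*\circ\iota_*)\subseteq\ker(K\iota_*)$. By Theorem~\ref{kernel}, $\ker(f_*\circ\iota_*)$ is generated, as a two-sided ideal of $T$, by the rank-$2$ elements $\rho_*(t_*(\eta)\otimes z)$; and by Definitions~\ref{s} and~\ref{hs}, the rank-$2$ part of $KA$ is exactly the image of the block-multiplication map $H_*(GL_1;\mathbb F_\ell)^{\otimes2}\to H_*(GL_2;\mathbb F_\ell)$. Since $\ker(K\iota_*)$ is a two-sided ideal, the desired inclusion holds as soon as each generator $\rho_*(t_*(\eta)\otimes z)$ maps to $0$ in $H_*(GL_2;\mathbb F_\ell)$; so the conjecture is \emph{equivalent} to this vanishing.

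\textbf{Reduction to $SL_2$, and the odd-weight classes.} Represent $\iota_{2*}\rho_*(t_*(\eta)\otimes z)$ by a bar cycle with entries $\mathrm{diag}(u^{-1}w,uw)=\mathrm{diag}(w,w)\cdot\mathrm{diag}(u^{-1},u)$; since $\mathrm{diag}(w,w)$ is central, this cycle is the image of $z\otimes t_*(\eta)$ under the group homomorphism $GL_1\times SL_2\to GL_2$, $(w,M)\mapsto\mathrm{diag}(w,w)M$, so it is enough to show $t_*(\eta)=0$ in $H_*(SL_2;\mathbb F_\ell)$ for every $\eta$ on the list, where $t\colon GL_1\to SL_2$ is $u\mapsto\mathrm{diag}(u^{-1},u)$. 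Conjugating $\mathrm{diag}(u^{-1},u)$ by the Weyl element $w_0=\begin{pmatrix}0&-1\\1&0\end{pmatrix}\in SL_2$ yields $\mathrm{diag}(u,u^{-1})$, i.e.\ $t=c_{w_0}\circ t\circ\mathrm{inv}$ with $c_{w_0}$ conjugation by $w_0$, so $t_*=t_*\circ\mathrm{inv}_*$ because inner automorphisms act trivially on homology. By Proposition~\ref{homology}, the inversion of $GL_1$ negates $\xi$ inside ${}_\ell GL_1$ and negates each degree-one exterior generator; since $[\xi]^{(s)}$ is an $s$-th divided power of $\xi$ (so $-\xi$ has $s$-th divided power $(-1)^s[\xi]^{(s)}$), it follows that inversion multiplies the basis cycle $\eta=[\xi]^{(s)}\wedge\langle v_1,\dots,v_i\rangle$ of \eqref{cycles} by $(-1)^s(-1)^i=(-1)^{\parallel\eta\parallel}$. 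Hence a class $\eta$ of odd weight satisfies $t_*(\eta)=-t_*(\eta)$, so $t_*(\eta)=0$ since $\ell$ is odd. This is Proposition~\ref{reduce}: what survives is to prove $t_*(\eta)=0$ in $H_*(SL_2;\mathbb F_\ell)$ for each of the finitely many \'etale obstruction classes of Definition~\ref{eos}, all of which have the \emph{even} weight $s+i=2(s+j)$, so the Weyl trick does not reach them.

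\textbf{The core problem and the main obstacle.} The remaining statement is a homological incarnation of the Steinberg relations: the \'etale obstruction cycles $[\xi]^{(s)}\wedge\langle v_1,\dots,v_i\rangle$ are homological thickenings of symbols built from the cyclotomic units $-\xi,1-\xi,\dots,1-\xi^r$ of \eqref{cunits} (which satisfy relations such as $(1-\xi)+\xi=1$), and they must be shown to bound in $\mathcal B_*(SL_2;\mathbb F_\ell)$ even though they do not bound in the diagonal torus. The only relations that can accomplish this are those of $SL_2(R)$ invisible in $GL_1^{\times2}$, namely the unipotent matrices together with $w_0$, linked by the classical identities that write $\mathrm{diag}(u,u^{-1})$ as a word in elementary matrices. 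A structural proof would produce, uniformly in $s$, in the subset $\{v_1,\dots,v_i\}$ of the set \eqref{cunits}, and in the regular odd prime $\ell$, explicit nullhomotopies of these cycles in $\mathcal B_*(SL_2;\mathbb F_\ell)$ built from those relations; finding such formulas is exactly the crux and, I expect, the main obstacle, since it amounts to controlling $H_*(SL_2;\mathbb F_\ell)$ in all of the degrees $2(s+j)$ at once. The workable substitute — and what I would use to obtain concrete cases — is to fix a finite presentation of $SL_2(R)$ (Theorem~\ref{main}), apply Hopf's formula and its higher-degree generalizations \cite{MR997360}, and run the resulting combinatorial group computation in GAP~\cite{gap}; this settles homological degree $2$ for $\ell=5$ (Theorem~\ref{main1}), but the sizes of the presentation and of Hopf's formula grow too quickly to reach the conjecture in general by this route.
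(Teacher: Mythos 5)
You should note at the outset that the statement you were asked to prove is stated in the paper as a \emph{conjecture}: the paper itself contains no proof of it, only an equivalent reformulation (Propositions \ref{reduce} and \ref{finite}) and partial evidence (Theorem \ref{main1}, i.e.\ homological degree two for $\ell=5$, together with the known case $\ell=3$). Your proposal does not prove it either, and you say so honestly. The part of your argument that is actually carried out is correct and coincides in substance with the paper's own reduction: identifying $KA\approx T/\ker\iota_*$ and $KA^{\acute{e}t}\approx T/\ker(f_*\circ\iota_*)$, invoking Theorem \ref{kernel} to see that $Kf_*$ is an isomorphism exactly when the rank-two generators $\rho_*(t_*(\eta)\otimes z)$ die in $H_*(GL_2;\mathbb F_\ell)$, factoring $\mathrm{diag}(u^{-1}w,uw)$ through $GL_1\times SL_2\to GL_2$ so that it suffices to kill $\tau_*(\eta)$ in $H_*(SL_2;\mathbb F_\ell)$, and disposing of the odd-weight classes by conjugating $\tau$ by the Weyl element (the paper phrases this as: inversion on $GL_1$ extends to an inner automorphism of $SL_2$ via $\tau$, and $\sigma_*(\eta)=(-1)^{\parallel\eta\parallel}\eta$). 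This is exactly the proof of Proposition \ref{reduce} (you only use the easy direction of that equivalence, which is all you need; the paper's converse direction is the part requiring the spectral sequence argument of \cite{MR1973992}).

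The genuine gap is the step you yourself flag as ``the crux'': producing nullhomotopies in $\mathcal B_*(SL_2;\mathbb F_\ell)$ of the finitely many \'etale obstruction cycles $[\xi]^{(s)}\wedge\langle v_1,\dots,v_i\rangle$ of Definition \ref{eos}, uniformly in $s$, in the subset of \eqref{cunits}, and in the regular odd prime $\ell$. That vanishing \emph{is} the content of Conjecture \ref{cj} after the reduction, so deferring it means no proof has been given; nothing in your proposal supplies the required relations in $SL_2(R)$ beyond the general expectation that unipotents and the Weyl element must enter. Your fallback — a finite presentation of $SL_2$ (Theorem \ref{main}), Hopf's formula \eqref{elHopf} and its generalizations \cite{MR997360}, and machine computation in GAP \cite{gap} — is precisely the route the paper takes in \S\ref{fp}--\S\ref{hopff}, and, as you anticipate, it only reaches homological degree two for $\ell\in\{3,5\}$ (Corollary \ref{proof} and Theorem \ref{main1}); it does not touch the higher-degree classes \eqref{ec} with $p=3s+2j>2$ even for $\ell=5$. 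In short: your submission is a correct re-derivation of the paper's reduction plus the same evidence strategy, but the conjecture itself remains unproved, both in your proposal and in the paper.
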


By (2), (3) and (6), our Conjecture \ref{cj} implies that the
Quillen conjecture \cite{MR0298694}*{p. 591} for the ring $R$
defined in Notation \ref{n} cannot be true in all the ranks $n$.
In this sense, our conjecture can be regarded as a "correction" of
the Quillen conjecture. Also our conjecture implies that $\iota_*$
is not surjective and $q_2$ is not an isomorphism.

\begin{remark}\label{useful} The Conjecture \ref{cj} and the Theorems \ref{onto} and \ref{kernel} (see also Remark \ref{sq}) compute
the direct summand $KA$ of the mysterious algebra \eqref{a}. This
summand is an algebra of homological symbols which is quadratic
with respect to the rank by the Corollary \ref{qq} .
\end{remark}

\subsection{A useful reduction} Recalling $t$, $\rho$ defined in \eqref{tro}, we have a commutative diagram
\begin{equation*}
\begin{CD}
GL_1\times GL_1@>t\times Id>>GL_1^{\times 2}\times GL_1\\
@V\tau\times Id VV @VV\widetilde{\rho} V\\
SL_2\times GL_1@>\mu >> GL_2
\end{CD}
\end{equation*}
where $Id$ is the identity map, $\widetilde{\rho}$ is $\rho$ composed with the canonical inclusion $GL_1^{\times
2}\subset GL_2$,
\begin{equation}\label{tau}
\tau(u)=\begin{pmatrix}u^{-1}&0\\0&u\end{pmatrix}\text{ and } \mu(A\times u)=A\begin{pmatrix}u&0\\0&u\end{pmatrix}
\text{ (matrix product)}
\end{equation}
for all $u\in GL_1$ and $A\in SL_2$. By passing to mod $\ell$ homology we have the following

\begin{proposition}\label{reduce}
The Conjecture \ref{cj} is true if and only if $\tau_*(\epsilon)=0$ in $H_*(SL_2;\mathbb F_\ell)$ for all \'etale
obstruction classes $\epsilon\in H_*(GL_1;\mathbb F_\ell)$.
\end{proposition}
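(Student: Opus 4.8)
The plan is to reduce Conjecture~\ref{cj} to a statement purely about $SL_2$ by chasing the two commutative diagrams at our disposal: the ``approximation'' diagram \eqref{hdiagram} together with the square relating $t,\rho$ to $\tau,\mu$. Start with the reformulation of Conjecture~\ref{cj}. By part~(6) of \S\ref{statement} the conjecture is equivalent to $f_*$ being an isomorphism, hence — using that $q_1,q_3$ are isomorphisms (Corollary~\ref{q3}) — to $\iota_*$ being surjective together with $Kf_*$ being bijective. Since $Kf_*$ is already known to be surjective (part~(1)) and the algebras are generated in rank $1$ where $f_*\circ\iota_*$ is bijective, the content of the conjecture collapses to: \emph{every element of $\ker(f_*\circ\iota_*)$ already lies in $\ker\iota_*$}, i.e.\ $\iota_*$ kills all the generating relations of that kernel. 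By Theorem~\ref{kernel}, $\ker(f_*\circ\iota_*)$ is the two-sided ideal of $T$ generated by the elements $\rho_*(t_*(\eta)\otimes z)$ with $\eta$ an \'etale obstruction class (or a homogeneous class of odd weight) and $z$ ranging over a basis of $T_{*1}$. So Conjecture~\ref{cj} holds if and only if $\iota_*$ annihilates each such generator.

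Next I would push these generators through the commutative square. The square says $\widetilde\rho\circ(t\times\mathrm{Id})=\mu\circ(\tau\times\mathrm{Id})$ as maps $GL_1\times GL_1\to GL_2$; passing to mod~$\ell$ homology and using the K\"unneth identifications, the rank-$2$ generator $\iota_*\rho_*(t_*(\eta)\otimes z)$ equals $\mu_*\big(\tau_*(\eta)\otimes [z]\big)$ in $H_*(GL_2;\mathbb F_\ell)$, where $[z]\in H_*(GL_1;\mathbb F_\ell)$. Hence if $\tau_*(\epsilon)=0$ in $H_*(SL_2;\mathbb F_\ell)$ for every \'etale obstruction class $\epsilon$, then each rank-$2$ generator of the ideal maps to $0$ under $\iota_*$; since $\iota_*$ is an algebra map and the ideal is generated in rank $2$, \emph{all} of $\ker(f_*\circ\iota_*)$ dies, giving one direction. (One must check that the odd-weight homogeneous classes are also covered — but a homogeneous class $\eta$ of odd weight is, up to the divided-power/exterior decomposition of Proposition~\ref{homology} and the Bockstein formula of Lemma~\ref{b}, the image $\beta(\epsilon)$ of an \'etale obstruction class $\epsilon$ under the Bockstein, and $\beta$ is natural, so $\tau_*(\epsilon)=0$ forces $\tau_*(\beta\epsilon)=\beta\tau_*(\epsilon)=0$; this naturality bookkeeping is the one genuinely fiddly point.) For the converse, suppose some \'etale obstruction class $\epsilon$ has $\tau_*(\epsilon)\neq0$ in $H_*(SL_2;\mathbb F_\ell)$; take $z$ to be the class of $[\ ]$ (the degree-zero generator), so that $\mu_*(\tau_*(\epsilon)\otimes[\,])$ is just the image of $\tau_*(\epsilon)$ under the split injection $H_*(SL_2)\hookrightarrow H_*(GL_2)$ (split because $GL_2\cong SL_2\rtimes GL_1$, or via the determinant retraction). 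This image is then a nonzero element of $\ker(f_*\circ\iota_*)$ that is not in $\ker\iota_*$, so $q_2$ fails to be injective and the conjecture is false.

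The main obstacle I anticipate is not the diagram chase but the two supporting facts that make it rigorous: first, that $\mu_*$ restricted to $H_*(SL_2)\otimes H_0(GL_1)$ is injective into $H_*(GL_2)$, which I would get from the semidirect-product splitting $GL_2=SL_2\rtimes GL_1$ and naturality of group homology (so $H_*(SL_2)$ is a direct summand of $H_*(GL_2)$); and second, the weight/Bockstein bookkeeping needed to see that controlling the even-weight \'etale obstruction classes $[\xi]^{(s)}\wedge\langle v_1,\dots,v_i\rangle$ automatically controls their odd-weight Bockstein partners $[\xi]^{(s-1)}\wedge[\xi]\wedge\langle v_1,\dots,v_i\rangle$ appearing in Theorem~\ref{kernel}. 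Everything else is formal: the equivalences in \S\ref{statement}, the commutativity of the $t,\rho,\tau,\mu$ square, and the fact that an ideal generated in a single rank is killed by an algebra homomorphism as soon as its generators are.
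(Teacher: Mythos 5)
Your skeleton is the same as the paper's (reformulate the conjecture as $\ker(f_*\circ\iota_*)\subseteq\ker\iota_*$, then push the generators of Theorem \ref{kernel} through the commutative square to get $\iota_*\rho_*(t_*(\eta)\otimes z)=\mu_*(\tau_*(\eta)\otimes z)$), but two supporting steps fail. First, the odd-weight generators. Your parenthetical claim that every homogeneous class of odd weight is the Bockstein of an \'etale obstruction class is false: by Lemma \ref{b} and the derivation property of $\beta$ (together with $\beta[v]=0$ in positive reduced degrees), one has $\beta\bigl([\xi]^{(s')}\wedge\langle v'_1,\dots,v'_{i'}\rangle\bigr)=\pm[\xi]^{(s'-1)}\wedge[\xi]\wedge\langle v'_1,\dots,v'_{i'}\rangle$, which always carries an exterior factor $[\xi]=[-\xi]$ and vanishes when $s'=0$; so classes such as $\langle 1-\xi\rangle$ or $[\xi]^{(2)}\wedge\langle 1-\xi\rangle$ (odd weight, no $-\xi$ factor) are not of the form $\beta(\epsilon)$ with $\epsilon$ an \'etale obstruction class, and even the ones with a $-\xi$ factor need not satisfy the constraint $i'=s'+2j$, $j>0$. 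The paper disposes of \emph{all} odd-weight classes by a different device: for $\sigma(u)=u^{-1}$ one proves $\sigma_*\eta=(-1)^{\parallel\eta\parallel}\eta$, and $\tau\circ\sigma$ is conjugate to $\tau$ inside $SL_2$ (conjugation by $\begin{pmatrix}0&1\\-1&0\end{pmatrix}$ swaps the diagonal entries), so $\tau_*\eta=(-1)^{\parallel\eta\parallel}\tau_*\eta$ and $\tau_*\eta=0$ for odd weight since $\ell$ is odd. Without this (or an equivalent) argument your ``if'' direction does not cover all the generators listed in Theorem \ref{kernel}.

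Second, the converse. Your injectivity claim --- that $H_*(SL_2;\mathbb F_\ell)\to H_*(GL_2;\mathbb F_\ell)$ is a split injection ``because $GL_2\cong SL_2\rtimes GL_1$, or via the determinant retraction'' --- is not valid: the determinant retracts $GL_2$ onto $GL_1$, not onto $SL_2$, and for a semidirect product $N\rtimes Q$ the map $H_*(N)\to H_*(N\rtimes Q)$ factors through the $Q$-coinvariants and need not be injective (already $\mathbb Z\subset D_\infty$ kills $H_1$ mod an odd prime). Getting from $\mu_*(\tau_*(\epsilon)\otimes 1)=0$ in $H_*(GL_2;\mathbb F_\ell)$ back to $\tau_*(\epsilon)=0$ in $H_*(SL_2;\mathbb F_\ell)$ is precisely the nontrivial input, and the paper obtains it from a spectral sequence argument as in \cite{MR1973992}*{Lemma 4.8} (morally: $SL_2\times GL_1\to GL_2$, $(A,u)\mapsto A\begin{pmatrix}u&0\\0&u\end{pmatrix}$, has kernel and image-index $2$-primary, hence prime to $\ell$); your semidirect-product remark does not substitute for it. Two further slips: your opening assertion that by (6) the Conjecture is equivalent to $f_*$ being an isomorphism is wrong --- (6) is a one-way implication needing $\iota_*$ surjective, and since $f_*$ is \emph{not} an isomorphism by (2), that equivalence would make the Conjecture false outright; and $q_2$ is injective by construction \eqref{monoX}, so the failure you exhibit in the converse is of injectivity of $Kf_*$, not of $q_2$. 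The reformulation $\ker(f_*\circ\iota_*)\subseteq\ker\iota_*$ that you eventually state is correct, but it should be justified directly from $q_1,q_3$ being isomorphisms and the identification $A^{\acute{e}t}_{*1}\approx T_{*1}$, not from the claimed equivalence.
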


\begin{proof} The cycle $[\xi^{-1}]^{(1)}$ is homologous to $-[\xi]^{(1)}$ as we deduce from
$$
\partial[\xi^i|\xi^{-1}|\xi]=[\xi^{-1}|\xi]-[\xi^{i-1}|\xi]-[\xi^i|\xi^{-1}]
$$
by taking the sum over $i=1,...,\ell$. If $\sigma_*:T_{*1}\to
T_{*1}$ is the homomorphism induced by $\sigma:GL_1\to GL_1$,
$u\mapsto u^{-1}$, and $\eta$ is represented by \eqref{cycles}
then we can prove inductively that $
\sigma_*(\eta)=(-1)^{\parallel\eta\parallel}\eta $ where
$\parallel\eta\parallel$ is given by Definition \ref{weight}.
Because $\sigma$ extends to an inner automorphism of $SL_2$ via
$\tau$, we conclude that $\tau_*\circ\sigma_*$ is the identity map
on $H_*(SL_2;\mathbb F_\ell)$. Hence, the classes $\tau_*(\eta)$
with $\eta\in T_{*1}$ and $\parallel\eta\parallel$ odd vanish in
$H_*(SL_2;\mathbb F_\ell)$. The necessity follows now from the
equation
$$
\widetilde{\rho}_*(t_*(\eta)\otimes z)=\mu_*(\tau_*(\eta)\otimes z)
$$
by chasing the diagram \eqref{hdiagram} and using the Theorem
\ref{kernel}. The sufficiency follows by a spectral sequence
argument as in \cite{MR1973992}*{Lemma 4.8}.
\end{proof}

\section{A group theoretical approach}\label{fp}

The aim of this section is to provide a group theoretical method producing evidence for the Conjecture \ref{cj}. This
method is based on a finitely presented group defined next.

\subsection{A finitely presented group} Let $SE_2$ be the group generated by the symbols
$D(u)$ and $E(x)$ subject to the following relations
\cite{MR0207856}:
\begin{eqnarray}\label{types}
\text{ Type I. } &&E(x)E(0)E(y)=D(-1)E(x+y)\\\nonumber \text{ Type II. } &&E(x)=D(u)E(xu^2)D(u)\\\nonumber \text{
Type III. } &&E(u^{-1})E(u)E(u^{-1})=D(-u)\\
\nonumber\text{ Type IV. } &&D(u)D(v)=D(uv)
\end{eqnarray}
where $u,v\in GL_1$ and $x,y\in R$ run over all elements. We
introduce the following labels:
\begin{equation}\label{l}
z:=D(\xi),\ u_i:=D(\epsilon_i),\ a:=E(0),\ b:=E(1)
\end{equation}
where $\epsilon_i:=1-\xi^i$ for $i=1,2,...,r$ are given by
\eqref{cunits}, and we define:
\begin{equation}\label{d}
b_t:=z^{rt}bz^{rt}a,\ w:=z^cu_1u_2...u_r
\end{equation}
where $t=0,1,2,...,2r$ and $c\ge0$ is the smallest integer such
that
$$2c\equiv r^2+\frac{r(r+1)}{2}\mod\ell.$$ We will
occasionally use $b_t$ with $t$ an \emph{arbitrary} integer where
$b_t=b_s$ if $t\equiv s\mod\ell$ and the following notation
$[x,y]=xyx^{-1}y^{-1}$.

\begin{definition}\label{cc}
For each non-empty subset $I\subset\{1,2,...,r\}$ define
$$
c(I):=(\prod_{t=0}^{2r}b_t^{c_t(I)})a^{-1}\prod_{i\in I}u_i
$$
where $c_t(I)\in\mathbb Z$ such that in $R$ we have the following identity:
$$
\epsilon_I:=\prod_{i\in I}\epsilon_i=\prod_{i\in I}(1-\xi^i)=\sum_{t=0}^{2r}c_t(I)\xi^t.
$$
For instance, if $I=\{i\}$ is a singleton, then $
c(I)=b_0b_i^{-1}a^{-1}u_i$ and if $I=\{i,j\}$ has two elements
then $c(I)=b_0b_i^{-1}b_j^{-1}b_{i+j}a^{-1}u_iu_j$.
\end{definition}

\begin{theorem}\label{main} The group $SE_2$ defined above is generated by
$$
z, u_1, u_2,..., u_r, a, b
$$
subject to the following relations:
\begin{eqnarray}\label{units}
z^\ell=[z,u_i]=[u_i,u_j]=1\\\label{aa} a^4=[a^2,z]=[a^2,u_i]=1\\\label{absorb} a=zaz=u_iau_i\\\label{bb}
[b_s,b_t]=1\\\label{cyclo} b^3=a^2=b_0b_1...b_{2r}\\\label{elf} b_t^\ell=w^{-1}b_t^{(-1)^r}w\\\label{cui}
c(I)^3=1\\\label{last} ba^2=u_ibz^{-ri}b^{-1}b_0^{-1}z^{ri}bz^{-i}u_i
\end{eqnarray}
where $i,j\in\{1,2,...,r\}$, $s,t\in\{0,1,2,...,2r\}$, and $I\subset\{1,2,...,r\}$ runs over all nonempty subsets.
\end{theorem}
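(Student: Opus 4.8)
The plan for Theorem~\ref{main} is a sequence of Tietze transformations. Introducing the generators $z:=D(\xi)$, $u_i:=D(\epsilon_i)$, $a:=E(0)$, $b:=E(1)$ together with their defining equations and then eliminating all the $D(u)$ and $E(x)$, one is left with a presentation on $z,u_1,\dots,u_r,a,b$ whose relations are Types~I--IV rewritten in the new generators; the content of the theorem is that this rewritten relation set is equivalent to \eqref{units}--\eqref{last}. Concretely I will: (i) show $z,u_1,\dots,u_r,a,b$ generate $SE_2$ (so the elimination is legitimate); (ii) check that each of \eqref{units}--\eqref{last} is a consequence of Types~I--IV; and (iii) conversely, show that the Types~I--IV relations are all consequences of \eqref{units}--\eqref{last}, equivalently that in the group $G$ presented by \eqref{units}--\eqref{last} one can define elements $D(u)$ $(u\in GL_1)$ and $E(x)$ $(x\in R)$ satisfying Types~I--IV. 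Steps (ii) and (iii) produce mutually inverse homomorphisms between $G$ and $SE_2$ fixing the generating set. The matrix model $E(x)\mapsto\begin{pmatrix}x&-1\\ 1&0\end{pmatrix}$, $D(u)\mapsto\begin{pmatrix}u^{-1}&0\\ 0&u\end{pmatrix}$, under which Types~I--IV become identities in $SL_2(R)$, will be the bookkeeping device for every computation below.

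For (i): Type~IV makes $u\mapsto D(u)$ a homomorphism of $GL_1$, and Type~I at $x=y=0$ gives $a^2=E(0)^2=D(-1)$; since $GL_1=\langle-\xi,\epsilon_1,\dots,\epsilon_r\mid(-\xi)^{2\ell}=1\rangle$ and $\ell=(-1)^r\xi^{-r(r+1)/2}(\epsilon_1\cdots\epsilon_r)^2$, every $D(u)$, including $D(\ell^{\pm1})$, is a word in $z,u_1,\dots,u_r,a$. For the $E$'s, Type~II at $u=\xi^{-rt}$ (using $2r\equiv-1\bmod\ell$) gives $E(\xi^{t})=z^{rt}bz^{rt}$, hence $b_t=E(\xi^{t})E(0)$; iterating the Type~I identity $E(x+y)=a^2E(x)E(0)E(y)$ then yields $E(x)$ for all $x\in\mathbb Z[\xi]$, and a further use of Type~II with $u$ a power of $\ell$ clears $\ell$-denominators, covering all $x\in R$ (with $a=E(0)$, $b=E(1)$ the cases $t=0$). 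For (ii): \eqref{units} is Type~IV with commutativity of $GL_1$; \eqref{aa} follows from $a^2=D(-1)$ and $a^4=D(1)=1$; \eqref{absorb} is Type~II at $x=0$; setting $V(x):=E(x)E(0)$, Type~I gives $V(x)V(y)=a^2V(x+y)$ with $a^2$ central, which yields \eqref{bb} and, via $1+\xi+\cdots+\xi^{2r}=0$, the equality $a^2=b_0\cdots b_{2r}$ of \eqref{cyclo}, the other half $b^3=a^2$ being Type~III at $u=1$; \eqref{cui} follows from the identity $(E(v)D(v))^3=D(-1)$, valid for all $v\in GL_1$, obtained by feeding the Type~II consequence $E(v^{-1})=D(v)E(v)D(v)$ into Type~III and cancelling, after rewriting the case $v=\epsilon_I$ by means of Definition~\ref{cc} and $\sum_t c_t(I)=\epsilon_I|_{\xi=1}=0$; and \eqref{elf}, \eqref{last} are the instances of Type~II recording multiplication of the $E$-variable by $\ell$, resp.\ by $\epsilon_i=1-\xi^i$, the congruence $2c\equiv r^2+\frac{r(r+1)}{2}\bmod\ell$ being exactly what makes $(\xi^c\epsilon_1\cdots\epsilon_r)^2=(-1)^r\ell$, so that $w=z^cu_1\cdots u_r$ conjugates $E(x)$ to (essentially) $E((-1)^r\ell x)$.

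For (iii), the heart of the matter: in $G$, define $D(u)$ from the factorization of $u$ over $-\xi,\epsilon_1,\dots,\epsilon_r$ (well defined by \eqref{units}, \eqref{aa}); set $E(\xi^{t}):=b_t$, $E(0):=a$, extend to $\mathbb Z[\xi]$ by $E(x+y):=a^2E(x)E(0)E(y)$, and to $R$ by clearing $\ell$-denominators via Type~II; the only relations these definitions must respect are the additive $1+\xi+\cdots+\xi^{\ell-1}=0$ (built into $a^2=b_0\cdots b_{2r}$) and the multiplicative $(-\xi)^{2\ell}=1$. One then checks Types~I--IV for this $D,E$: I and IV are built in; II reduces, by multiplicativity in $u$, to $u\in\{\xi,\epsilon_1,\dots,\epsilon_r\}$, where the case $\xi$ is immediate from \eqref{units} and the case $\epsilon_i$ reduces to $x=1$, which is \eqref{last}; III reduces, via $E(v^{-1})=D(v)E(v)D(v)$, to $(E(v)D(v))^3=D(-1)$, which holds for $v$ a square by conjugacy-invariance from $b^3=a^2$, for $v=\epsilon_I$ by \eqref{cui}, for $v=-1$ by a short calculation from $b^3=a^2$ and $a^4=1$, and for the remaining units by reducing to these cases with the help of \eqref{last}.

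\textbf{The main obstacle is exactly this closure argument in (iii):} organizing the induction on the $\xi$-length and $\ell$-denominator of $x\in R$ (and on the exponent vector of $u\in GL_1$) so that every one of the infinitely many instances of Types~I--IV is genuinely reduced to the finite list, and confirming that the carefully tuned relations \eqref{bb}, \eqref{cyclo}, \eqref{elf}, \eqref{cui}, \eqref{last} do encode all the additive and multiplicative compatibilities needed. This is precisely the combinatorial difficulty signalled in the introduction, and the natural place to bring in a GAP computation to certify the individual reductions.
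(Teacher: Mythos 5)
Your plan has the same architecture as the paper's proof: show that $z,u_1,\dots,u_r,a,b$ generate $SE_2$, check that \eqref{units}--\eqref{last} are consequences of Types I--IV (your necessity arguments, including the identity $(E(v)D(v))^3=D(-1)$ behind \eqref{cui} and the role of $\lambda=\xi^c\epsilon_1\cdots\epsilon_r$ behind \eqref{elf}, are exactly the paper's), and then, conversely, define $D(u)$ and $E(x)$ inside the group presented by \eqref{units}--\eqref{last} and verify Types I--IV there. The problem is that this converse step, which you yourself label ``the main obstacle,'' is the actual content of the theorem, and your proposal contains no argument for it. The crux is not the multiplicative reduction of Type II to generators of $GL_1$ (which is formal once everything is well defined), but the prior question of whether $E(x)$ is a well-defined element of the presented group when $x\in R$ has $\ell$ in the denominator: one must write $x=x'v^{-2}$ with $x'\in\mathbb Z[\xi]$, $v\in GL_1$, and prove that $D(v)E(x')D(v)$ does not depend on this choice. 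Your sketch extends $E$ to $R$ ``by clearing $\ell$-denominators via Type II'' as if Type II were already available, which is circular, and your list of base cases $u\in\{\xi,\epsilon_1,\dots,\epsilon_r\}$ never brings in the unit $\lambda$ and the relation \eqref{elf}, without which denominators cannot be handled at all. The paper does exactly this missing work in Lemma \ref{IIS}: it formulates the statement $P(x',v)$ (that $D(v)E(x')D(v)=E(x'v^{-2})$ whenever $x'v^{-2}\in\mathbb Z[\xi]$), proves the base cases $P(\pm\xi^t,-\xi)$ (from the definitions \eqref{d} and \eqref{units}), $P(\pm\xi^t,\epsilon_i^{-1})$ (from \eqref{last} via \eqref{absorb}), and $P(\pm\ell\xi^t,\lambda)$ (from \eqref{elf}), closes up under addition in $x'$ (using Lemma \ref{eyI}) and under multiplication in $v$ when $v^{-1}\in\mathbb Z[\xi]$, and finishes with the arithmetic observation that every unit is of the form $v\lambda^s$ with $v^{-1}\in\mathbb Z[\xi]$; Types I, II, III for arbitrary arguments are then deduced, Type III through the coset representatives $\pm\epsilon_I$ of $GL_1$ modulo squares, much as you indicate.

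Two further points. First, your fallback of certifying the reductions with GAP cannot close this gap: what must be verified is an infinite family of identities indexed by $x,y\in R$ and $u\in GL_1$, and the paper's proof of Theorem \ref{main} is entirely by hand; GAP enters only afterwards, in Proposition \ref{evidence}, to compute $H_2(SE_2;\mathbb F_5)$ from the finished presentation. Second, some bookkeeping slips that would propagate if you carried the plan out: with the normalization $b_t=E(\xi^t)E(0)$ one has $E(\xi^t)=b_ta^{-1}$, not $b_t$; the Type II instance producing it is $u=\xi^{rt}$ (so $D(u)=z^{rt}$), not $u=\xi^{-rt}$; and $\xi$ together with $\epsilon_1,\dots,\epsilon_r$ does not generate $GL_1$ (the element $-1$ is lost), so the multiplicative reductions must be run on $-\xi,\epsilon_1,\dots,\epsilon_r$ with the central element $a^2=D(-1)$ tracking the signs, as the paper does.
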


The theorem implies that $SE_2$ has a finite presentation with $r+3$ generators and $6+6.5r+2.5r^2+2^r$ relators. Its
proof will be given as a sequence of lemmas. For convenience, we will refer to the relations \eqref{types} only by
type. Also we will tacitly use \eqref{l}, \eqref{d}, the relations in $GL_1$ given at the beginning of \S \ref{eoc},
and when appropriately, Type IV.

\begin{lemma}
$z,u_1,u_2,...,u_r,a,b$ generate $SE_2$.
\end{lemma}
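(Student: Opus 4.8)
The plan is to show that the generators $D(u)$ and $E(x)$ of $SE_2$, as $u$ ranges over $GL_1$ and $x$ over $R$, can all be expressed as words in the finite set $z,u_1,\dots,u_r,a,b$. Since $GL_1$ is generated by the cyclotomic units $-\xi,1-\xi,\dots,1-\xi^r$ subject to $(-\xi)^{2\ell}=1$, and Type IV makes $u\mapsto D(u)$ a group homomorphism, every $D(u)$ is a word in $D(-\xi)$ and the $D(1-\xi^i)=u_i$. So the first step is to express $D(-\xi)$ — and hence $z=D(\xi)$ — in terms of the listed generators; in fact, from Type III with $u=-1$ we get $E(-1)E(-1)E(-1)=D(1)$, and more usefully Type III with general $u$ writes $D(-u)$ in terms of $E(u^{-1})$ and $E(u)$, so once we control the $E(x)$'s we control all the $D$'s. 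Note $a=E(0)$ and $b=E(1)$ are already on the list, so the crux is to produce every $E(x)$, $x\in R$, from $a$, $b$, and the $D$'s.

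The second step is the additive structure: Type I, $E(x)E(0)E(y)=D(-1)E(x+y)$, shows that $E(x+y)$ is recoverable from $E(x)$, $E(y)$, $a=E(0)$, and $D(-1)$. Thus it suffices to generate $E(x)$ for $x$ ranging over a set of additive generators of $R=\mathbb{Z}[\tfrac1\ell,\xi]$, together with $E(0)$ and the $D$'s (which we have already reduced to the listed generators). As an additive group, $R$ is generated by the monomials $\xi^j$ for $j=0,\dots,\ell-2$ together with $\xi^j/\ell^k$; and Type II, $E(x)=D(u)E(xu^2)D(u)$, lets us move between $E(x)$ and $E(xu^2)$ at the cost of conjugation-type words in the $D$'s. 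Taking $u=\xi$ gives $E(\xi^2 x)=D(\xi)^{-1}E(x)D(\xi)^{-1}\cdots$, i.e. multiplication of the argument by $\xi^2$ (a unit generating a large part of the cyclic action since $\ell$ is odd) is realized; combined with $u=\epsilon_i$ we get multiplication by $\epsilon_i^2$. So from $b=E(1)$ we reach $E(\xi^{2m})$ and $E(u^2)$ for any unit $u$ that is a product of the basic cyclotomic units; since $\ell$ is odd, squaring is a bijection on $\mathbb{Z}/\ell$, so the powers $\xi^{2m}$ already exhaust all $\xi^j$. This handles $E(\xi^j)$ for all $j$; the denominators $1/\ell$ are handled similarly by noting $1/\ell\in GL_1$ is a product of the given units (up to roots of unity), so $E(\xi^j/\ell^k)=E(\xi^j\cdot(1/\ell^2)^{?}\cdot\,\cdots)$ arises by the same Type II manipulation, and then Type I assembles arbitrary $\mathbb{Z}$-linear combinations.

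Concretely, I would proceed in this order: (i) record that $D\colon GL_1\to SE_2$ is a homomorphism by Type IV, so all $D(u)$ are words in $z=D(\xi)$ and the $u_i=D(\epsilon_i)$ once $D(\xi)$ is shown to lie in the subgroup generated by the list — and observe $D(-1)=D(\xi)^\ell\cdot(\text{something})$ or directly $D(-1)=E(1)E(0)E(1)\cdot a^{-1}\cdots$ from Type I, placing $D(-1)$ in our subgroup; (ii) use Type III to write every $D(-u)$, hence every element of $GL_1$ up to sign and root of unity, via $E$'s — but more simply just use that $-\xi\in GL_1$ so $D(-\xi)=D(-1)D(\xi)$ is already reached; (iii) use Type II with $u=\xi$ and $u=\epsilon_i$ to generate $E(\xi^j)$ for all $j$ and $E(\xi^j/\ell^k)$ from $b=E(1)$; (iv) use Type I to generate $E(x)$ for all $x\in R$ by additivity; (v) conclude that every generator $D(u)$, $E(x)$ of $SE_2$ lies in the subgroup generated by $z,u_1,\dots,u_r,a,b$, which is therefore all of $SE_2$. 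The main obstacle I anticipate is step (iii): making sure the Type II conjugations actually close up to give \emph{all} of $R$ additively, in particular correctly accounting for the $1/\ell$ denominators and verifying that the unit group action by squares, as realized through Type II, together with the additive Type I relation, genuinely generates $R$ as a ring; this is where one must be careful that no element of $R$ is missed and that the bookkeeping of the $D$-conjugations stays inside the listed generators rather than requiring new $D(u)$'s — but since those new $D(u)$'s are themselves words in $z$ and the $u_i$ by step (i), the argument is self-contained.
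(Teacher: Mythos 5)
Your outline is correct and follows essentially the same route as the paper: express every $D(v)$ via Type IV and the cyclotomic generators of $GL_1$ (with $D(-1)$ supplied by Type I), pull each $E(\xi^t)$ back to $b=E(1)$ by Type II conjugation by powers of $z$, absorb the $1/\ell^k$ denominators by one more Type II application with a square of a unit, and assemble arbitrary arguments with Type I additivity — the paper just packages this more explicitly through the words $b_t=E(\xi^t)E(0)=z^{rt}bz^{rt}a$ and the resulting formula for $E(x')$ with $x'\in\mathbb{Z}[\xi]$, then writes a general $x\in R$ as $x'v^{-2}$. The only slip worth fixing is your formula for $D(-1)$: Type I with $x=y=0$ gives $E(0)^3=D(-1)E(0)$, i.e. $D(-1)=a^2$, which is the clean way to place $D(-1)$ (and hence $D(-\xi)=a^2z$) in the subgroup generated by the listed elements.
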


\begin{proof}
 By Type II with $u=-1$, it follows that $D(-1)$ is central and by Type I with $x=y=0$, we have
\begin{equation}\label{aad} a^2=D(-1).\end{equation}
Since each $v\in GL_1$ can be written as
$v=(-\xi)^j\epsilon_1^{a_1}...\epsilon_r^{a_r}$ for some integers $j$, $a_1$,...,$a_r$, we have
\begin{equation}\label{dv}D(v)=a^{2j}z^ju_1^{a_1}...u_r^{a_r}.\end{equation}
By Type II with $u=\xi^{rt}$ and $x=\xi^{-2rt}=\xi^t$,
\begin{equation}\label{bt}b_t=E(\xi^{t})E(0).\end{equation}
By Type I with $y=-x$ and \eqref{aad},
$$
E(x)E(0)E(-x)E(0)=1
$$
and hence,
\begin{equation}\label{bti}
b_t^{-1}=E(-\xi^t)E(0).
\end{equation}
If $x'=\sum_{t=0}^{2r}m_t\xi^t$ in $R$ with $m_t$ integers, then, by Type I,
$$
E(x')=[\prod_{t=0}^{2r}(E(\xi^t)E(0))^{m_t^+}(E(-\xi^t)E(0))^{m_t^-}]E(0)^{-1}D(-1)^{m-1}
$$
where $m=\sum_{t=0}^{2r}m_t$, and $m_t=m_t^+-m_t^-$ with $m_t^+$, $m_t^-$ nonnegative integers. Combining \eqref{aad},
\eqref{bt}, \eqref{bti} with the equation above, we deduce that
\begin{equation}\label{ey}
E(x')=(\prod_{t=0}^{2r}b_t^{m_t})a^{2m-3}.
\end{equation}
We remark that a permutation of the $\xi^t$-terms in $x'$ corresponds to a permutation of the $b_t$-factors in $E(x')$.
Any ring element $x\in R$ can be written in the form $x=x'v^{-2}$ for some $x'\in\mathbb Z[\xi]$ and $v\in GL_1$. By
Type II, we have
\begin{equation}\label{final}
E(x)=D(v)E(x')D(v)
\end{equation}
with $D(v)$ given by \eqref{dv} and $E(x')$ by \eqref{ey}, concluding the proof.

\end{proof}

\begin{lemma}\label{necessary1}
The relations \eqref{units} - \eqref{cyclo} are \emph{necessary}.
\end{lemma}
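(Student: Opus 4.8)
The plan is to verify that each of the relations \eqref{units}--\eqref{cyclo} actually holds in $SE_2$, working directly from the defining relations of Type I--IV together with the abelian group structure of $GL_1$. Throughout I would reuse what the previous proof already recorded: that $D$ restricts to a group homomorphism $GL_1\to SE_2$ by Type IV (so in particular $D(1)=1$ and $D$ preserves inverses and products), that $D(-1)$ is central (Type II with $u=-1$, together with $D(-1)^2=1$), that $a^2=D(-1)$ by \eqref{aad}, and that $b_t=E(\xi^t)E(0)$ by \eqref{bt}. The verification then splits naturally into the relations ``supported on $GL_1$'' and those involving the $b_t$.

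First I would dispose of \eqref{units}, \eqref{aa}, and \eqref{absorb}. Since the $D(v)$ commute with one another we get $[z,u_i]=[u_i,u_j]=1$; and $z^\ell=D(\xi)^\ell=D(\xi^\ell)=D(1)=1$ because $\xi^\ell=1$ already holds in $\mathbb C^\times\supset GL_1$. From $a^2=D(-1)$ and $D(-1)^2=D(1)=1$ one gets $a^4=1$, while the centrality of $a^2=D(-1)$ gives $[a^2,z]=[a^2,u_i]=1$. For \eqref{absorb}, Type II with $x=0$ and $u=\xi$ reads $a=E(0)=D(\xi)E(0)D(\xi)=zaz$, and with $u=\epsilon_i$ it reads $a=u_iau_i$.

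Next I would handle \eqref{bb} and \eqref{cyclo}. From $b_t=E(\xi^t)E(0)$ and Type I, $b_sb_t=E(\xi^s)E(0)E(\xi^t)E(0)=D(-1)E(\xi^s+\xi^t)E(0)$, which is symmetric in $s$ and $t$ since $\xi^s+\xi^t=\xi^t+\xi^s$ in $R$; this gives \eqref{bb}. Iterating the same step along $t=0,1,\dots,2r$ and using that $D(-1)$ is central with $D(-1)^2=1$, one obtains $b_0b_1\cdots b_{2r}=D(-1)^{2r}E\bigl(\textstyle\sum_{t=0}^{2r}\xi^t\bigr)E(0)=E(0)^2=D(-1)=a^2$, using $D(-1)^{2r}=1$, the vanishing $1+\xi+\cdots+\xi^{2r}=0$ (valid since $\ell=2r+1$), and \eqref{aad}. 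Finally, Type III with $u=1$ gives $b^3=E(1)^3=D(-1)=a^2$, which completes \eqref{cyclo}.

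I do not anticipate a genuine obstacle in this lemma; it is a sequence of short manipulations. The only points needing a little care are the congruences $2r\equiv-1$ and $1+\xi+\cdots+\xi^{2r}=0$ modulo $\ell=2r+1$ --- the first is precisely what makes the reformulation $b_t=E(\xi^t)E(0)$ valid --- and keeping track of powers of $a$ via $a^4=1$ and the centrality of $a^2$. The substantive difficulty of Theorem \ref{main} lies in the remaining relations \eqref{elf}--\eqref{last} and in the sufficiency direction, not in \eqref{units}--\eqref{cyclo}.
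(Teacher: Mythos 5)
Your proposal is correct and follows essentially the same route as the paper: each relation \eqref{units}--\eqref{cyclo} is verified directly from Types I--IV together with the identities $a^2=D(-1)$ and $b_t=E(\xi^t)E(0)$ established in the generation lemma, with \eqref{absorb} from Type II at $x=0$, \eqref{bb} and the second half of \eqref{cyclo} from Type I computations with $\xi^s+\xi^t$ and $\sum_{t=0}^{2r}\xi^t=0$ (the paper packages these via its formula \eqref{ey}, which you simply unwind), and $b^3=a^2$ from Type III at $u=1$.
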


\begin{proof} We have the following list of short arguments:

\eqref{units} follows from Type IV.

\eqref{aa} follows from \eqref{aad}.

\eqref{absorb} follows from Type II with $x=0$.

\eqref{bb} follows from \eqref{ey} with $x'=\xi^t+\xi^s=\xi^s+\xi^t$ in $R$.

\eqref{cyclo} \emph{the first part} follows from Type III with
$u=1$ and \eqref{aad}.

\eqref{cyclo} \emph{the second part} follows by \eqref{ey} with
$x'=\sum_{j=0}^{2r}\xi^j=0$ in $R$.

\end{proof}

\begin{lemma}[\cite{MR718674}]\label{factor}
In $R$ we have $\ell=(-1)^r\lambda^2$ where
$\lambda:=\xi^c\epsilon_1\epsilon_2...\epsilon_r$.
\end{lemma}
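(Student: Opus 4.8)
The identity is classical and the plan is to derive it directly from the factorization of the $\ell$-th cyclotomic polynomial. First I would recall that
$$
\prod_{i=1}^{\ell-1}(X-\xi^i)=1+X+\cdots+X^{\ell-1},
$$
and evaluate both sides at $X=1$ to obtain $\ell=\prod_{i=1}^{\ell-1}(1-\xi^i)$.

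Next, since $\ell-1=2r$, I would group the $2r$ factors into the $r$ conjugate pairs indexed by $i$ and $\ell-i$ for $i=1,\ldots,r$. Using $\xi^\ell=1$ one has $1-\xi^{\ell-i}=1-\xi^{-i}=-\xi^{-i}(1-\xi^i)$, so each pair contributes
$$
(1-\xi^i)(1-\xi^{\ell-i})=-\xi^{-i}\,(1-\xi^i)^2=-\xi^{-i}\epsilon_i^2 .
$$
Multiplying over $i=1,\ldots,r$ and collecting the powers of $\xi$ (the exponent being $-\sum_{i=1}^r i=-r(r+1)/2$) gives
$$
\ell=(-1)^r\,\xi^{-r(r+1)/2}\Bigl(\prod_{i=1}^r\epsilon_i\Bigr)^{\!2}.
$$

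Finally, I would compare this with $(-1)^r\lambda^2=(-1)^r\,\xi^{2c}\bigl(\prod_{i=1}^r\epsilon_i\bigr)^2$ for $\lambda=\xi^c\epsilon_1\cdots\epsilon_r$: the two agree if and only if $\xi^{2c}=\xi^{-r(r+1)/2}$, i.e.\ $2c\equiv-\tfrac{r(r+1)}{2}\pmod\ell$. This is exactly the defining congruence for $c$ in \eqref{d}, once one notes that $r^2+\tfrac{r(r+1)}{2}$ and $-\tfrac{r(r+1)}{2}$ differ by $r^2+r(r+1)=r(2r+1)=r\ell\equiv0\pmod\ell$, so the two normalizations coincide modulo $\ell$ and $\xi^{2c}$ is unambiguous.

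The computation is entirely elementary; the only point requiring care is the bookkeeping of the sign $(-1)^r$ and of the exponent of $\xi$, together with checking that the two equivalent forms of the congruence defining $c$ really do agree modulo $\ell$. The passage from $\mathbb Z[\xi]$ to $R=\mathbb Z[\tfrac1\ell,\xi]$ is harmless, since the identity already holds in $\mathbb Z[\xi]$.
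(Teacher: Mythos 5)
Your proof is correct. The paper offers no argument for this lemma at all---it simply cites Washington's book---so your elementary derivation (evaluating $\prod_{i=1}^{\ell-1}(X-\xi^i)=1+X+\cdots+X^{\ell-1}$ at $X=1$ and pairing the factors for $i$ and $\ell-i$) is exactly the standard computation behind that citation, and all the bookkeeping checks out: each pair contributes $-\xi^{-i}\epsilon_i^2$, the exponent of $\xi$ is $-r(r+1)/2$, and your reconciliation of the two congruences for $c$ is right since $r^2+r(r+1)=r(2r+1)=r\ell\equiv0\pmod\ell$, so $\xi^{2c}=\xi^{-r(r+1)/2}$ with the paper's definition of $c$ in \eqref{d}.
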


\begin{lemma} \eqref{elf} is \emph{necessary}.
\end{lemma}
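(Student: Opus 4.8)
The plan is to show that the relation $b_t^\ell = w^{-1} b_t^{(-1)^r} w$ holds in $SE_2$ by exploiting the factorization $\ell = (-1)^r \lambda^2$ from Lemma \ref{factor}, where $\lambda = \xi^c \epsilon_1 \cdots \epsilon_r$. The key observation is that $b_t = E(\xi^t)E(0)$ by \eqref{bt}, so that $b_t^\ell$ should be comparable to $E(\ell \xi^t)$ via the additivity encoded in Type I. First I would record that $w = z^c u_1 \cdots u_r = D(\xi^c)D(\epsilon_1)\cdots D(\epsilon_r) = D(\lambda)$ by \eqref{dv} and Type IV, up to a power of $a^2 = D(-1)$ which is central; more precisely $w = a^{2j_0} D(\lambda)$ for the appropriate exponent $j_0$, and since $a^{2}$ is central this ambiguity will not affect conjugation. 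Thus conjugation by $w$ is conjugation by $D(\lambda)$, and by Type II we have $D(\lambda) E(x) D(\lambda) = E(x\lambda^2)$ for all $x \in R$.

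The main computation is to iterate Type I to express $b_t^\ell$ in terms of a single $E$-symbol. Using \eqref{bt} and \eqref{bti} together with the formula \eqref{ey}, taking $x' = \ell \xi^t = \underbrace{\xi^t + \cdots + \xi^t}_{\ell}$ in $R$, one gets $E(\ell \xi^t) = b_t^{\ell}\, a^{2\ell - 3}$. On the other hand $\ell \xi^t = (-1)^r \lambda^2 \xi^t$, so $E(\ell \xi^t) = E((-1)^r \lambda^2 \xi^t)$. I would split into the two parity cases. When $r$ is even, $E(\lambda^2 \xi^t) = D(\lambda) E(\xi^t) D(\lambda) = w^{-1}\big(a^{2j_0}\big) E(\xi^t)\big(a^{2j_0}\big) w$ with the central $a^2$-powers absorbed; one then needs to convert $E(\xi^t)$ back to $b_t = E(\xi^t)E(0)$, picking up a controlled power of $a^2$ from \eqref{ey} with a single term ($m=1$, giving $E(\xi^t) = b_t a^{-1}$). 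Matching the two expressions for $E(\ell\xi^t)$ and cancelling the (central, finite-order) powers of $a^2$ yields $b_t^\ell = w^{-1} b_t w$. When $r$ is odd, $\ell \xi^t = -\lambda^2 \xi^t$, so I would first handle the sign using $E(-x)$: from \eqref{bti} and the Type I relation $E(x)E(0)E(-x)E(0) = 1$ one has $E(-\xi^t) = b_t^{-1}a$ up to central terms, and conjugating by $D(\lambda)$ as before produces $b_t^{-1}$ conjugated by $w$; again matching with $b_t^\ell a^{2\ell-3}$ and cancelling central $a^2$-powers gives $b_t^\ell = w^{-1} b_t^{-1} w$. The two cases combine to $b_t^\ell = w^{-1} b_t^{(-1)^r} w$.

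The step I expect to be the main obstacle is the careful bookkeeping of the powers of $a^2 = D(-1)$ that accumulate when repeatedly applying Type I (each application of \eqref{ey} contributes a factor $a^{2m-3}$, and rewriting $E$-symbols as products of $b_t$'s introduces further such factors), together with verifying that $\ell$ odd makes $a^{2\ell - 3} = a^{2(\ell-1)} a^{-1}$ collapse correctly using $a^4 = 1$ from \eqref{aa}. Since $a^2$ is central of order two, all these factors lie in the center and in a group of order dividing $2$, so the entire discrepancy is a single central sign which must be shown to be trivial; the cleanest way is to track the total exponent of $a^2$ modulo $2$ on both sides. Once that accounting is done the identity follows mechanically. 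I would also remark that $w$ commutes with $z$ and the $u_i$ but not with $b_t$, which is exactly why the relation \eqref{elf} is nontrivial and why it cannot be simplified away.
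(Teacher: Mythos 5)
Your overall strategy is the one the paper uses: invoke Lemma \ref{factor} to write $\ell=(-1)^r\lambda^2$, apply the Type II mechanism (in the form \eqref{final}) to $x=\ell\xi^t$, and translate both sides into $b_t$'s and $a$'s via \eqref{ey} and \eqref{dv}, treating the two parities of $r$ through the exponent $(-1)^r$. However, there is a genuine gap at the decisive step. Type II is \emph{not} a conjugation relation: it reads $E(x)=D(u)E(xu^2)D(u)$ with the \emph{same} factor $D(u)$ on both sides, so what the computation actually yields is $E(\ell\xi^t)=D(\lambda)^{-1}E((-1)^r\xi^t)D(\lambda)^{-1}$, i.e.\ (up to central powers of $a^2$) $b_t^\ell a^{4r-1}=w^{-1}b_t^{(-1)^r}a^{-1}w^{-1}$, with $w^{-1}$ at \emph{both} ends. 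Passing from this to the conjugation form $w^{-1}b_t^{(-1)^r}w$ demanded by \eqref{elf} is not a matter of bookkeeping a central sign: the discrepancy between $w^{-1}(\cdot)w^{-1}$ and $w^{-1}(\cdot)w$ is a factor of $w^{2}$, which is not a power of $a^2$ and is not central (its image in $SL_2$ is the diagonal matrix with entries $(-1)^r\ell^{-1}$ and $(-1)^r\ell$). The correct repair is the relation \eqref{absorb}, already established in Lemma \ref{necessary1}: $a=zaz=u_iau_i$ gives $a^{-1}w^{-1}=wa^{-1}$, so the trailing $a^{-1}$ produced by $E((-1)^r\xi^t)=b_t^{(-1)^r}a^{-1}$ (mod $a^4$) flips the last $w^{-1}$ into $w$. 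Your proposal never invokes \eqref{absorb}, and your assertion that ``the entire discrepancy is a single central sign'' is false precisely at this point.

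Two further slips confirm the same confusion. The line $D(\lambda)E(\xi^t)D(\lambda)=w^{-1}a^{2j_0}E(\xi^t)a^{2j_0}w$ is self-inconsistent: if $w=a^{2j_0}D(\lambda)$, then both outer factors become $w$ times central $a$-powers, not one $w^{-1}$ and one $w$. And Type II gives $D(\lambda)E(x)D(\lambda)=E(x\lambda^{-2})$, not $E(x\lambda^2)$. Once you replace ``conjugation by $w$ equals conjugation by $D(\lambda)$'' by the correct two-sided identity, insert \eqref{absorb} to move the $a^{-1}$ across $w^{-1}$, and reduce the $a$-exponents modulo $4$ using \eqref{aa}, your argument collapses to exactly the paper's proof.
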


\begin{proof} By Lemma \ref{factor} we can apply \eqref{final} to $$x=\ell\xi^t, x'=(-1)^r\xi^t, v=\lambda^{-1}$$
and get
$$
E(\ell\xi^t)=D(\lambda)^{-1}E((-1)^r\xi^t)D(\lambda)^{-1}.
$$
By \eqref{dv} and \eqref{ey}, the equation above can be rewritten as
$$
b_t^\ell a^{4r-1}=w^{-1}b_t^{(-1)^r}a^{-1}w^{-1}.
$$
Now we can use \eqref{aa} and \eqref{absorb} proven in Lemma \ref{necessary1}.
\end{proof}

\begin{lemma}\label{pcui} \eqref{cui} is \emph{necessary}.
\end{lemma}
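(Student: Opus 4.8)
The plan is to derive the relation $c(I)^3=1$ inside the group $SE_2$ from Type III, exactly as the analogous relations for $z$ and the $u_i$ were obtained. The starting observation is that Type III with $u=1$ gives $b^3=a^2$ (after \eqref{aad}), so the cube of an element of the form $E(\cdot)E(0)$ carries the Steinberg-type relation, and the key is to recognize $c(I)$ as precisely such a cube-carrying element associated to the cyclotomic unit $\epsilon_I=\prod_{i\in I}\epsilon_i\in GL_1$. Concretely, I would apply Type III with $u=\epsilon_I$: this reads
$$
E(\epsilon_I^{-1})E(\epsilon_I)E(\epsilon_I^{-1})=D(-\epsilon_I).
$$
Then I would rewrite each factor using the group-theoretic machinery already established in the preceding lemmas — formula \eqref{final} to express $E(\epsilon_I)$ and $E(\epsilon_I^{-1})$ in terms of $D(v)$'s and $E(x')$'s, formula \eqref{ey} to expand $E(x')$ as a product of $b_t$'s and a power of $a$, and formula \eqref{dv} to express $D(-\epsilon_I)$ in terms of $z$, the $u_i$, and $a$ — using the integer expansion $\epsilon_I=\sum_{t=0}^{2r}c_t(I)\xi^t$ from Definition \ref{cc} to read off exactly which $b_t$-powers appear.

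The bookkeeping step is then to collect: the word $(\prod_t b_t^{c_t(I)})\,a^{-1}\prod_{i\in I}u_i$ is by definition $c(I)$, and Type III should reorganize into the statement that the cube of this word is trivial, once the central powers of $a^2=D(-1)$ are tracked and cancelled using \eqref{aa} and \eqref{absorb} (which were already shown necessary in Lemma \ref{necessary1}). In more detail, I expect the computation to mirror the derivation of \eqref{elf}: substitute $x=\epsilon_I^{-1}$, $x'=$ the appropriate integral representative, $v=$ the appropriate unit, into Type III, apply \eqref{dv}, \eqref{ey}, \eqref{final}, and then normalize the spurious powers of $a$ using $a^4=1$, $a=zaz=u_iau_i$, and the centrality of $a^2$. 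The cases $I=\{i\}$ and $I=\{i,j\}$ spelled out in Definition \ref{cc} serve as a sanity check that the general word $c(I)$ has been identified correctly.

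The main obstacle will be the combinatorial control of the $b_t$-factors: formula \eqref{ey} produces the $b_t$'s in an order dictated by the chosen ordering of the monomials $\xi^t$ in the integral representative of $\epsilon_I$, and Type III involves $\epsilon_I$ and $\epsilon_I^{-1}$, whose integral representatives are genuinely different words. I would handle this using the remark following \eqref{ey} — that permuting the $\xi^t$-terms permutes the $b_t$-factors — together with the commutativity relations \eqref{bb}, $[b_s,b_t]=1$, proven necessary in Lemma \ref{necessary1}; this lets me treat $\prod_t b_t^{c_t(I)}$ as an unordered product and absorb all reorderings freely. Once the $b_t$'s commute past one another, the remaining $a$-arithmetic is routine, and the identity $c(I)^3=1$ drops out of Type III applied at $u=\epsilon_I$.
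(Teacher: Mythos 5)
Your proposal is correct and takes essentially the same route as the paper: Type III applied at the cyclotomic unit $\epsilon_I$, with \eqref{final} (i.e.\ Type II) converting $E(\epsilon_I^{-1})$ into $D(\epsilon_I)E(\epsilon_I)D(\epsilon_I)$, and \eqref{ey}, \eqref{dv} together with Definition \ref{cc} identifying $c(I)$ with $D(-1)E(\epsilon_I)D(\epsilon_I)$. The paper simply keeps the computation at the level of these three identities (using centrality of $a^2=D(-1)$) rather than expanding everything into $b_t$-words, so the shuffle bookkeeping with \eqref{bb} that you anticipate is not actually needed.
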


\begin{proof}

\noindent For $I\subset\{1,2,...,r\}$ recall that $\epsilon_I:=\prod_{i\in I}\epsilon_i$. Then, the Definition \ref{cc}
gives by \eqref{ey} with $x'=\epsilon_I$ the following formula
$$
c(I)=D(-1)E(\epsilon_I)D(\epsilon_I).
$$
By \eqref{final} with $x=\epsilon_I^{-1}$ and $x'=v=\epsilon_I$ we have
$$
E(\epsilon_I^{-1})=D(\epsilon_I)E(\epsilon_I)D(\epsilon_I).
$$
By Type III with $u=\epsilon_I$, we have
$$
D(-1)E(\epsilon_I)E(\epsilon_I^{-1})E(\epsilon_I)D(\epsilon_I)=1.
$$
The conclusion follows by combining the three equations above.
\end{proof}

\begin{lemma}\label{necessary2} \eqref{last} is \emph{necessary}.
\end{lemma}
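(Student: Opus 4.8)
The plan is to obtain \eqref{last} from a single instance of \textbf{Type II}, the one with $x=1$ and $u=\epsilon_i=1-\xi^i$, which reads $E(1)=D(\epsilon_i)E(\epsilon_i^2)D(\epsilon_i)$, that is, $b=u_iE(\epsilon_i^2)u_i$ by \eqref{l}. Since $\epsilon_i^2=1-2\xi^i+\xi^{2i}$ already lies in $\mathbb Z[\xi]$ and, for $1\le i\le r$, its exponents $0,i,2i$ are pairwise distinct and lie in $\{0,1,\dots,2r\}$, I would expand $E(\epsilon_i^2)$ by the master formula \eqref{ey}: with $m_0=1$, $m_i=-2$, $m_{2i}=1$ and $m=\sum_t m_t=0$ this gives $E(\epsilon_i^2)=b_0b_i^{-2}b_{2i}a^{-3}$. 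Hence $b=u_ib_0b_i^{-2}b_{2i}a^{-3}u_i$, and multiplying by $a^2$, which is central and so commutes past $u_i$ by \eqref{aa}, turns the left side into $ba^2$ and the right side into $u_ib_0b_i^{-2}b_{2i}a^{-1}u_i$.

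It then remains to verify the combinatorial identity $b_0b_i^{-2}b_{2i}a^{-1}=bz^{-ri}b^{-1}b_0^{-1}z^{ri}bz^{-i}$ among the generators. Here I would substitute the definitions $b_0=ba$ and $b_i=z^{ri}bz^{ri}a$ from \eqref{d}, use $z^\ell=1$ to rewrite $z^{2ri}=z^{-i}$ so that $b_{2i}=z^{-i}bz^{-i}a$, and use $a=zaz$ from \eqref{absorb} in the form $z^ka^{-1}=a^{-1}z^{-k}$. Expanding $b_i^{-2}=(a^{-1}z^{-ri}b^{-1}z^{-ri})^2$, the inner block $z^{-ri}a^{-1}z^{-ri}$ collapses to $a^{-1}$; after cancelling the common prefix $bz^{-ri}b^{-1}a^{-1}b^{-1}$ (recall $b_0^{-1}=a^{-1}b^{-1}$), what is left is $z^{-i-2ri}=1$, i.e. $z^{i\ell}=1$, which holds by \eqref{units}. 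This proves \eqref{last}.

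No idea beyond the choice of the Type-II instance is needed: the derivation uses only \eqref{units}, \eqref{aa} and \eqref{absorb}, all already shown necessary. The one delicate point — and the place where a wrong sign or exponent would most easily slip in — is that $a$ and $z$ do not commute, so every $z$-power must be carried across $a^{\pm1}$ via $z^ka^{-1}=a^{-1}z^{-k}$ (equivalently $az^k=z^{-k}a$), never naively; combined with the reduction $z^{2ri}=z^{-i}$, this bookkeeping is the only real obstacle.
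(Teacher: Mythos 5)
Your derivation is correct and essentially the paper's: both obtain \eqref{last} from the Type II relation tying $b=E(1)$ to $E(\epsilon_i^2)$ together with the expansion formula \eqref{ey}, the only difference being that you expand $\epsilon_i^2=1-2\xi^i+\xi^{2i}$ directly, while the paper writes $\epsilon_i^2=\xi^i(\xi^{-i}-2+\xi^i)$ and invokes \eqref{final} with $v=\xi^{ri}$, which is the same bookkeeping with the $z^{\pm ri}$ factors placed elsewhere. Your final reduction using \eqref{d}, \eqref{units}, \eqref{aa}, \eqref{absorb} (the leftover power being $z^{-i(2r+1)}=1$) checks out.
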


\begin{proof}
We start with $\epsilon_i^2=\xi^{i}(\xi^{-i}-2+\xi^i)$ in $R$ and by Type II with $u=\epsilon_i^{-1}$, $x=\epsilon_i^2$
and \eqref{final} with $$x'=\xi^{-i}-2+\xi^i, v=\xi^{ri}, x=\xi^ix'$$ we get
$$
u_i^{-1}bu_i^{-1}=z^{ri}b_{-i}b_0^{-1}b_0^{-1}b_ia^{-3}z^{ri}
$$
The desired relation now follows by \eqref{d} and \eqref{aa}.
\end{proof}

\begin{lemma}\label{dvS}
The relations \eqref{units} - \eqref{last} are \emph{sufficient}
to verify that 1) the relation \eqref{dv} is well defined for
$v\in GL_1$, 2) Type IV holds true, and 3) $a^2=D(-1)$ is central.
\end{lemma}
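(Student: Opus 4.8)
The plan is to fix the group $\Gamma$ presented by the generators $z,u_1,\dots,u_r,a,b$ and the relations \eqref{units}--\eqref{last}, and to verify the three assertions by elementary manipulations inside $\Gamma$; in fact only the ``abelian'' relations \eqref{units}, \eqref{aa} and the first part of \eqref{cyclo} will be used.

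First I would settle the well-definedness of \eqref{dv}. Recall from the description of $GL_1$ given at the beginning of \S\ref{eoc} (see \cite{MR718674}) that $GL_1$ is the abelian group on the cyclotomic generators $-\xi,\epsilon_1,\dots,\epsilon_r$ of \eqref{cunits} subject to the single relation $(-\xi)^{2\ell}=1$, so any $v\in GL_1$ has the form $(-\xi)^{j}\epsilon_1^{a_1}\cdots\epsilon_r^{a_r}$ with $j$ well defined modulo $2\ell$ and $a_1,\dots,a_r$ well defined in $\mathbb Z$. To show that $D(v):=a^{2j}z^{j}u_1^{a_1}\cdots u_r^{a_r}$ is independent of the chosen word for $v$, it suffices to check in $\Gamma$ that (i) the elements $a^2z,u_1,\dots,u_r$ commute with one another and (ii) $(a^2z)^{2\ell}=1$, because then $v\mapsto(a^2z)^{j}u_1^{a_1}\cdots u_r^{a_r}$ factors through the defining presentation of $GL_1$. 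Both are read off at once: (i) from $[a^2,z]=[a^2,u_i]=1$ in \eqref{aa} and $[z,u_i]=[u_i,u_j]=1$ in \eqref{units}, and (ii) from $a^4=1$ in \eqref{aa}, $z^\ell=1$ in \eqref{units}, and the oddness of $\ell$, which give $(a^2z)^{2\ell}=a^{4\ell}z^{2\ell}=1$. This produces a well-defined map $D\colon GL_1\to\Gamma$ with image in the abelian subgroup generated by $a^2z$ and the $u_i$.

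Granting this, Type IV is purely formal: for $u=(-\xi)^{j}\epsilon_1^{a_1}\cdots\epsilon_r^{a_r}$ and $v=(-\xi)^{k}\epsilon_1^{b_1}\cdots\epsilon_r^{b_r}$ the commutativity (i) lets me add exponents, so $D(u)D(v)=(a^2z)^{j+k}u_1^{a_1+b_1}\cdots u_r^{a_r+b_r}=D(uv)$ by the previous step. For the last assertion I would first compute $D(-1)$: since $\ell$ is odd, $-1=(-\xi)^{\ell}$ in $R$, so \eqref{dv} gives $D(-1)=a^{2\ell}z^{\ell}=a^2$ after using $z^\ell=1$ and $a^4=1$ (note $2\ell\equiv2\bmod 4$). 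Finally $a^2$ commutes with $z$ and with each $u_i$ by \eqref{aa}, with $a$ trivially, and with $b$ since $a^2=b^3$ by the first part of \eqref{cyclo} and $b$ commutes with $b^3$; as these elements generate $\Gamma$, the element $a^2=D(-1)$ is central.

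I do not expect a genuine obstacle: the lemma is bookkeeping. The only delicate point is the independence of \eqref{dv} from the word chosen for $v$, which is exactly why the argument is organized so as to reduce to matching the single relation $(-\xi)^{2\ell}=1$ defining the torsion of $GL_1$ with the identity $(a^2z)^{2\ell}=1$ holding in $\Gamma$; once (i) and (ii) are established, the remaining items follow immediately.
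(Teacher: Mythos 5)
Your argument is correct and uses exactly the ingredients the paper invokes: the paper dismisses this lemma as ``immediate by \eqref{units}, \eqref{aa}, and \eqref{cyclo} the first part,'' and your write-up is precisely that argument carried out in detail (commutativity of $a^2z,u_1,\dots,u_r$, the identity $(a^2z)^{2\ell}=1$ matching the defining relation of $GL_1$, and $a^2=b^3$ to get centrality). So this is the same approach, just made explicit.
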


The proof is immediate by \eqref{units}, \eqref{aa}, and
\eqref{cyclo} \emph{the first part}. In what follows we will use
this lemma tacitly.

\begin{lemma}\label{eyS}
The relations \eqref{units} - \eqref{last} are \emph{sufficient} to verify that \eqref{ey} is well defined for
$x'\in\mathbb Z[\xi]$.
\end{lemma}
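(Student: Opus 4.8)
The plan is to show that the expression \eqref{ey}, namely
$$
E(x')=(\prod_{t=0}^{2r}b_t^{m_t})a^{2m-3}\quad\text{with }m=\sum_t m_t,
$$
does not depend on the integers $m_t$ chosen to represent $x'=\sum_{t=0}^{2r}m_t\xi^t\in\mathbb Z[\xi]$. Two such representations differ by adding, to the coefficient vector $(m_t)$, an integer combination of the relation vector coming from $\sum_{t=0}^{2r}\xi^t=0$ in $R$; that is, the ambiguity is exactly generated by the substitution $(m_0,\dots,m_{2r})\mapsto(m_0+1,\dots,m_{2r}+1)$. So the whole lemma reduces to checking that adding $1$ to every $m_t$ leaves the right-hand side unchanged. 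First I would observe that, since the $b_t$ commute with one another by \eqref{bb} (and $a^2=D(-1)$ is central by Lemma \ref{dvS}), the product $\prod_t b_t^{m_t}$ is a well-defined element once the exponents are fixed, so reordering is not an issue and I only need to track the total effect of the shift.

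Next, under the shift $m_t\mapsto m_t+1$ the new right-hand side is $(\prod_{t=0}^{2r}b_t^{m_t+1})a^{2(m+2r+1)-3}=(\prod_t b_t^{m_t})(b_0b_1\cdots b_{2r})a^{2m-3}a^{2(2r+1)}$. By \eqref{cyclo} (the second part) we have $b_0b_1\cdots b_{2r}=a^2$, and by \eqref{aa} we have $a^4=1$, so $a^{2(2r+1)}=a^{4r}a^2=a^2$. Hence the shifted expression equals $(\prod_t b_t^{m_t})a^2\cdot a^2\cdot a^{2m-3}=(\prod_t b_t^{m_t})a^4 a^{2m-3}=(\prod_t b_t^{m_t})a^{2m-3}$, which is the original right-hand side. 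This is the core of the argument; it is a short computation once the reduction to the single generating shift is in place.

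The one genuinely substantive point — and the step I expect to be the main obstacle — is justifying that the only relations among integer representations of an element of $\mathbb Z[\xi]$ in terms of $1,\xi,\dots,\xi^{2r}$ are multiples of $1+\xi+\cdots+\xi^{2r}=0$. This is a statement about the kernel of the surjection $\mathbb Z^{2r+1}\to\mathbb Z[\xi]$, $(m_t)\mapsto\sum m_t\xi^t$; since $\mathbb Z[\xi]$ is free of rank $\ell-1=2r$ on $1,\xi,\dots,\xi^{2r-1}$ and $\xi^{2r}=-(1+\xi+\cdots+\xi^{2r-1})$ because the $\ell$-th cyclotomic polynomial is $1+X+\cdots+X^{\ell-1}$, that kernel is the rank-one subgroup generated by $(1,1,\dots,1)$. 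I would state this cleanly and cite it as the defining relation of the cyclotomic ring. With that in hand, the verification above handles the generating shift, and by linearity (applying it $k$ times, or its inverse) it handles an arbitrary element of the kernel, so \eqref{ey} is well defined; combined with the already-established well-definedness of \eqref{dv} from Lemma \ref{dvS}, this also makes \eqref{final} unambiguous, which is what later lemmas will need.
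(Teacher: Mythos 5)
Your proposal is correct and follows essentially the same route as the paper: both identify the ambiguity in writing $x'=\sum_t m_t\xi^t$ as an integer multiple of the constant shift $(1,\dots,1)$ (the paper states this as "$m_t-n_t=j$ independent of $t$") and then cancel it using \eqref{bb}, the second part of \eqref{cyclo}, and $a^4=1$ from \eqref{aa}. Your version merely spells out the shift computation and the kernel argument in more detail (the aside about $a^2$ being central is unnecessary, since all powers of $a$ sit at the right end), so there is nothing further to change.
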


\begin{proof}
Let $x'=\sum_{t=0}^{2r}m_t\xi^t=\sum_{t=0}^{2r}n_t\xi^t$ in $R$
with $m_t,\ n_t\in\mathbb Z$. Then $m_t-n_t=j$ is independent of
$t$. From \eqref{bb} and \eqref{cyclo} \emph{the second part} we
deduce that the right hand side of \eqref{ey} remains unchanged
under the transformation $m_t=n_t+j$ or a permutation of the
$b_t$-factors.
\end{proof}

\begin{lemma}\label{eyI}
The relations \eqref{units} - \eqref{last} are \emph{sufficient} for Type I with $x,y\in\mathbb Z[\xi]$.
\end{lemma}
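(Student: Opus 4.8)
The plan is to verify Type I directly from the defining formula \eqref{ey}, invoking only the relations already declared available, namely \eqref{aa}, \eqref{bb} and \eqref{cyclo}, together with the structural consequences recorded in Lemmas \ref{dvS} and \ref{eyS}. First I would fix representations $x=\sum_{t=0}^{2r}m_t\xi^t$ and $y=\sum_{t=0}^{2r}n_t\xi^t$ with integer coefficients; then $x+y=\sum_{t=0}^{2r}(m_t+n_t)\xi^t$ is a representation of the sum, and by Lemma \ref{eyS} the value of $E$ on $x$, on $y$ and on $x+y$ may be computed unambiguously from these three compatible representations. Putting $m=\sum_t m_t$ and $n=\sum_t n_t$, formula \eqref{ey} reads $E(x)=\bigl(\prod_t b_t^{m_t}\bigr)a^{2m-3}$, $E(y)=\bigl(\prod_t b_t^{n_t}\bigr)a^{2n-3}$ and $E(x+y)=\bigl(\prod_t b_t^{m_t+n_t}\bigr)a^{2(m+n)-3}$, while $0=\sum_t 0\cdot\xi^t$ forces $E(0)=a^{-3}$.

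Next I would expand $E(x)E(0)E(y)$. The middle factor $a^{2m-3}a^{-3}=a^{2m-6}$ is an even power of $a$, hence a power of $a^2=D(-1)$, which is central by Lemma \ref{dvS}; moving it to the right past $\prod_t b_t^{n_t}$ and then merging the two products of $b_t$'s into $\prod_t b_t^{m_t+n_t}$, which is legitimate since the $b_t$ pairwise commute by \eqref{bb}, collapses the left-hand side to $\bigl(\prod_t b_t^{m_t+n_t}\bigr)a^{2(m+n)-9}$. On the other side, centrality of $a^2$ gives $D(-1)E(x+y)=\bigl(\prod_t b_t^{m_t+n_t}\bigr)a^{2(m+n)-1}$. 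The two exponents of $a$ differ by $8$, hence agree modulo the relation $a^4=1$ from \eqref{aa}, and Type I follows.

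I do not expect a genuine obstacle in this lemma; the only care required is bookkeeping, namely choosing compatible representations so that Lemma \ref{eyS} applies verbatim, and checking that every rearrangement is sanctioned by an already-established relation: centrality of $a^2$ from \eqref{aa}, \eqref{cyclo} and Lemma \ref{dvS}, commutativity of the $b_t$ from \eqref{bb}, and the $4$-torsion of $a$ from \eqref{aa}. The case $x,y\in\mathbb Z[\xi]$ is really the base case; the substantive work, namely extending Type I to arbitrary $x,y\in R$ and verifying Types II, III and IV, is carried out in the subsequent lemmas, where the conjugation formula \eqref{final} together with Type II reduces the general case to the one treated here.
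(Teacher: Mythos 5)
Your proposal is correct and follows essentially the same route as the paper: fix integer representations of $x$ and $y$, invoke Lemma \ref{eyS} to compute $E(x+y)$ from the coefficient sums $m_t+n_t$, and then deduce Type I from formula \eqref{ey} together with \eqref{bb}, the centrality of $a^2=D(-1)$ (Lemma \ref{dvS}), and $a^4=1$ from \eqref{aa}. The paper leaves this final exponent bookkeeping implicit ("Type I follows from \eqref{ey} and \eqref{bb}"), and your explicit computation, with the two powers of $a$ differing by $8$, fills it in correctly.
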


\begin{proof}
Let $x=\sum_{t=0}^{2r}m_t\xi^t$ and $y=\sum_{t=0}^{2r}n_t\xi^t$ with $m_t$, $n_t$ integers. By Lemma \ref{eyS} we can
choose $x+y=\sum_{t=0}^{2r}(m_t+n_t)\xi^t$ and Type I follows from \eqref{ey} and \eqref{bb}.
\end{proof}

\begin{lemma}\label{IIS}
The relations \eqref{units} - \eqref{last} are \emph{sufficient} to verify that \eqref{final} is well defined for
$x=x'v^{-2}$ with $x'\in\mathbb Z[\xi]$ and $v\in GL_1$.
\end{lemma}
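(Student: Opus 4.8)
The plan is to turn the well-definedness of \eqref{final} into a \emph{restricted form of Type II} and then strip away the data step by step. Suppose $x=x_1'v_1^{-2}=x_2'v_2^{-2}$ with $x_i'\in\mathbb{Z}[\xi]$ and $v_i\in GL_1$. Since $GL_1$ is abelian, $w:=v_2v_1^{-1}\in GL_1$ satisfies $x_2'=x_1'w^2$ and $v_2=v_1w$; and by Lemma \ref{dvS} the assignment \eqref{dv} is a homomorphism of $GL_1$ onto the abelian subgroup generated by $z,u_1,\dots,u_r,a^2$, so $D(v_2)=D(v_1)D(w)=D(w)D(v_1)$. Cancelling $D(v_1)$ on the left and $D(v_1)$ on the right, the two candidate values $D(v_1)E(x_1')D(v_1)$ and $D(v_2)E(x_2')D(v_2)$ of \eqref{final} coincide if and only if
\[
E(x)=D(w)E(xw^2)D(w)
\]
for all $w\in GL_1$ and all $x\in\mathbb{Z}[\xi]$ with $xw^2\in\mathbb{Z}[\xi]$; denote this identity by $(\ast)$. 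Thus it suffices to deduce $(\ast)$ from the relations \eqref{units}--\eqref{last}.

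First I would reduce $(\ast)$ to the case where $w$ is one of the cyclotomic generators $-\xi,\epsilon_1,\dots,\epsilon_r$. Writing $w=PN^{-1}$ with $P=(-\xi)^{j}\epsilon_1^{b_1}\cdots\epsilon_r^{b_r}$ and $N=\epsilon_1^{d_1}\cdots\epsilon_r^{d_r}$ having non-negative exponents, both $P$ and $N$ lie in $\mathbb{Z}[\xi]$, and multiplying an element of $\mathbb{Z}[\xi]$ by any one of these generators again lands in $\mathbb{Z}[\xi]$; so one may apply the generator-cases of $(\ast)$ one factor at a time to get $E(x)=D(P)E(xP^2)D(P)$ and $E(xw^2)=D(N)E(xP^2)D(N)$ (note $xP^2=xw^2N^2$), and then combine them using again that $D$ lands in an abelian subgroup. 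Next, for a \emph{fixed} generator $w$ I would reduce $(\ast)$ to the case $x=\xi^t$ a monomial: by Type I over $\mathbb{Z}[\xi]$ (Lemma \ref{eyI}) the map $x\mapsto E(x)$ satisfies $E(x+y)=a^2E(x)aE(y)$, and since $x\mapsto xw^2$ is additive and $D(w)^{-1}aD(w)^{-1}=a$ (an immediate consequence of \eqref{absorb} and \eqref{aa}, because $z$ and $u_i$ conjugate $a$ to $a^{-1}$ and $a^4=1$), the right-hand side $x\mapsto D(w)E(xw^2)D(w)$ satisfies the \emph{same} twisted-additivity law; hence the two sides agree on all of $\mathbb{Z}[\xi]$ as soon as they agree on a $\mathbb{Z}$-basis $1,\xi,\dots,\xi^{\ell-2}$.

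It then remains to verify $(\ast)$ for $x=\xi^t$ and $w\in\{-\xi,\epsilon_1,\dots,\epsilon_r\}$. For $w=-\xi$ this is a short direct computation: $D(-\xi)=a^2z$ by \eqref{dv}, and expanding $E(\xi^t(-\xi)^2)=E(\xi^{t+2})=b_{t+2}a^{-1}$ via \eqref{ey} and \eqref{d} and simplifying with $z^\ell=1$, $a^4=1$, $a^2$ central and $az=z^{-1}a$ (from \eqref{absorb}), both sides collapse to $b_ta^{-1}=E(\xi^t)$. For $w=\epsilon_i$ I would first settle $t=0$: here $(\ast)$ reads $b=u_iE(\epsilon_i^2)u_i$ with $E(\epsilon_i^2)=b_0b_i^{-2}b_{2i}a$ by \eqref{ey}, and after rewriting the $b_s$ via \eqref{d} and using \eqref{aa} this is precisely relation \eqref{last} --- indeed it is the converse of the computation in Lemma \ref{necessary2}. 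The remaining values $t=1,\dots,\ell-2$ follow from $t=0$ by conjugating the identity by $z^{rt}$: since $[z,u_i]=1$ this fixes $D(\epsilon_i)=u_i$, while \eqref{absorb} together with the definition \eqref{d} gives $zb_sz^{-1}=b_{s-2}$ (using $r^{-1}\equiv-2\bmod\ell$, as $2r\equiv-1$), so $z^{rt}$ shifts every index by $b_s\mapsto b_{s+t}$ and sends $a$ to $az^{-2rt}=az^{t}$; the spurious factor $z^{t}$ thus produced is the same on both sides and cancels, yielding $(\ast)$ for $x=\xi^t$.

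The main obstacle is the monomial case $w=\epsilon_i$: it is the only place where the complicated relation \eqref{last} enters, and one must carry out carefully both the reduction to $t=0$ (keeping exact track of the residual powers of $z$ so that they genuinely cancel) and the bookkeeping identifying the rewritten form of $b=u_iE(\epsilon_i^2)u_i$ with \eqref{last}; the case $w=-\xi$, the factorization reduction, and the twisted-additivity reduction are by comparison routine manipulations with \eqref{units}, \eqref{aa}, and \eqref{absorb}.
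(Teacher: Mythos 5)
Your proposal is correct, and at the level of base cases it matches the paper: the paper also reduces well-definedness of \eqref{final} to the family of identities $D(v)E(x')D(v)=E(x'v^{-2})$ (its statement $P(x',v)$), verifies them for monomials with $v=-\xi$ by direct computation and with $v=\epsilon_i^{-1}$ by reduction to \eqref{last}, and then extends by additivity in $x'$ (Lemma \ref{eyI}) and multiplicativity of $D$ (Lemma \ref{dvS}). Where you genuinely diverge is the multiplicative step: the paper composes $P(x',v_1)$ and $P(x',v_2)$ into $P(x',v_1v_2)$, which forces the intermediate element $x'v_1^{-2}$ to be integral, hence only reaches units whose inverse lies in $\mathbb Z[\xi]$; it therefore needs a third base case $P(\pm\ell\xi^t,\lambda)$ --- the only place \eqref{elf} and Lemma \ref{factor} enter --- and concludes by writing an arbitrary unit as $v\lambda^s$. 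Your factorization $w=PN^{-1}$ into nonnegative words in the generators \eqref{cunits}, pushing $E(x)$ up by $P^2$ and $E(xw^2)$ up by $N^2$ and matching them at $E(xP^2)=E(xw^2N^2)$ (legitimate by Lemma \ref{eyS}), keeps every intermediate argument inside $\mathbb Z[\xi]$ automatically, so you never invoke the $\lambda$-case nor \eqref{elf}; since the lemma asserts sufficiency of \eqref{units}--\eqref{last}, using only a subset of the relations is perfectly fine (your route in effect shows \eqref{elf} is redundant modulo the others, which is compatible with its necessity), and it buys a cleaner integrality bookkeeping at the cost of nothing. Your other deviation --- proving the $\epsilon_i$-identity only at $t=0$, where after substituting $b_0=ba$ and rewriting $b_i,b_{2i}$ via \eqref{d} and \eqref{absorb} it is indeed exactly \eqref{last}, and then conjugating by $z^{rt}$ --- also works: $z^{rt}b_sz^{-rt}=b_{s+t}$ and $z^{rt}a^{\pm1}z^{-rt}=a^{\pm1}z^{t}$ (with $a^{\pm2}$ central and unaffected), so each side acquires a single factor $z^t$ at its right end which cancels, as you predicted; the paper instead reduces $u_i^{-1}b_ta^{-1}u_i^{-1}=b_tb_{t+i}^{-2}b_{t+2i}a^{-3}$ to \eqref{last} uniformly in $t$, which is comparable work. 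Two small repairs: in the additivity step, agreement on the basis monomials alone does not cover negative coefficients --- either observe that both sides take the value $a$ at $0$, so $F(-x)$ is determined by $F(x)$ through the twisted law, or use that every element of $\mathbb Z[\xi]$ is a nonnegative combination of $1,\xi,\dots,\xi^{2r}$ because these sum to zero; and your parenthetical that $z,u_i$ conjugate $a$ to $a^{-1}$ is not what \eqref{absorb} says (it gives $zaz^{-1}=az^{-2}$), although the identity you actually use, $D(w)^{-1}aD(w)^{-1}=a$, does follow at once from \eqref{absorb} and \eqref{aa}; also beware the notational clash that your $w$ is a unit while $w$ in \eqref{d} is the group element $z^cu_1\cdots u_r$.
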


\begin{proof} It suffices to prove that the following statement\\

$P(x',v)$ : \emph{If $y':=x'v^{-2}\in\mathbb Z[\xi]$ then $D(v)E(x')D(v)=E(y')$ is a consequence of the relations
\eqref{units} - \eqref{last}.}\\

\noindent is true for all $x'\in\mathbb Z[\xi]$ and $v\in GL_1$ where $E(x')$, $E(y')$, and $D(v)$ are given by
\eqref{ey} and \eqref{dv}. By Lemmas \ref{eyS} and \ref{dvS}, these formulas are independent of the way $x'$, $y'$, and
$v$ are presented. Also, we recall that $b_t=b_s$ if $t\equiv s\mod\ell$.

$P(\pm\xi^t,-\xi)$ \emph{is true}. If $x'=\xi^t$ and $y'=\xi^{t-2}$, we check that
$$
zb_ta^{-1}z=b_{t-2}a^{-1}
$$
holds true by definitions. The case $x'=-\xi^t$ is similar.

$P(\pm\xi^t,\epsilon_i^{-1})$ \emph{is true}. If $x'=\xi^t$ and $y'=\xi^t-2\xi^{t+i}+\xi^{t+2i}$, we use \eqref{absorb}
to reduce the equation
$$
u_i^{-1}b_ta^{-1}u_i^{-1}=b_sb_{t+i}^{-2}b_{t+2i}a^{-3}
$$
to \eqref{last} as in the proof of Lemma \ref{necessary2}. The case $x'=-\xi^t$ is similar.

$P(\pm\ell\xi^t,\lambda)$ \emph{is true}. Here $\lambda$ is defined in Lemma \ref{factor} such that
$$
y'=x'\ell\lambda^{-2}=\pm(-1)^r\xi^t
$$
The statement now follows from \eqref{absorb} and \eqref{elf}.

By \eqref{absorb} and Lemma \ref{eyI}, $P(x_1',v)$ and $P(x_2',v)$ imply $P(x_1'+x_2',v)$.  So, $P(x',-\xi)$,
$P(x',\epsilon_i^{-1})$, and $P(\ell x',\lambda)$ are true for all $x'\in\mathbb Z[\xi]$ and $i=1,2,...,r$. If
$v_1^{-1},v_2^{-1}\in\mathbb Z[\xi]$ such that $P(x',v_1)$ and $P(x',v_2)$ are true for all $x'\in\mathbb Z[\xi]$, then
$P(x',v_1v_2)$ is also true for all $x'\in\mathbb Z[\xi]$. Since \eqref{cunits} is a generating set for $GL_1$ it
follows that $P(x',v)$ is true for all $x'\in\mathbb Z[\xi]$ and all $v\in GL_1$ such that $v^{-1}\in\mathbb Z[\xi]$.
The proof can now be concluded by the observation that every element of $GL_1$ is of the form $v\lambda^s$ with
$v^{-1}\in\mathbb Z[\xi]$ and $s$ a nonnegative integer.
\end{proof}

\begin{lemma}
The relations \eqref{units} - \eqref{last} are \emph{sufficient} for Type I, Type II, and Type III.
\end{lemma}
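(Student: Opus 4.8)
The plan is to pass to the group $\Gamma$ presented by the generators $z,u_1,\dots,u_r,a,b$ subject to the relations \eqref{units}--\eqref{last}, and to verify that the elements $D(v)$ ($v\in GL_1$) and $E(x)$ ($x\in R$) \emph{defined} inside $\Gamma$ by the formulas \eqref{dv}, \eqref{ey}, \eqref{final} satisfy the relations of Types I, II and III; Type IV is already part of Lemma \ref{dvS}. By Lemmas \ref{dvS}, \ref{eyS} and \ref{IIS} these elements are well defined, $D(-1)=a^2$ is central, and, by Lemma \ref{eyI}, Type I holds whenever the arguments lie in $\mathbb Z[\xi]$; what is left is to extend Types I and II to arbitrary arguments and to prove Type III in full.

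I would first dispose of Type II, which is formal. Given $u\in GL_1$ and $x\in R$, write $x=x'v^{-2}$ with $x'\in\mathbb Z[\xi]$ and $v\in GL_1$ --- possible by Lemma \ref{factor}, since $1/\ell=(-1)^r\lambda^{-2}$ --- so that $xu^2=x'(vu^{-1})^{-2}$. Then the definition \eqref{final} gives $E(x)=D(v)E(x')D(v)$ and $E(xu^2)=D(vu^{-1})E(x')D(vu^{-1})$, whence $D(u)E(xu^2)D(u)=D(v)E(x')D(v)=E(x)$ by Type IV and the commutativity of $GL_1$; in particular $D(s)E(0)D(s)=E(0)$ for every $s\in GL_1$. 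For Type I with $x,y\in R$, pick $v=\lambda^N$ with $N$ so large that $X:=xv^2$ and $Y:=yv^2$ lie in $\mathbb Z[\xi]$; Type II then gives $E(x)=D(v)E(X)D(v)$, likewise for $E(y)$ and $E(x+y)$, together with $E(0)=D(v)E(0)D(v)$, and multiplying these out and using $D(v^2)E(0)D(v^2)=E(0)$ collapses the product to $D(v)\bigl(E(X)E(0)E(Y)\bigr)D(v)$, which equals $D(v)D(-1)E(X+Y)D(v)=D(-1)E(x+y)$ by Lemma \ref{eyI}, the centrality of $D(-1)$, and Type II.

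The substantial part is Type III, $E(u^{-1})E(u)E(u^{-1})=D(-u)$ for all $u\in GL_1$, which I would first verify on a set of units generating $GL_1$ modulo squares and then propagate. The case $u=1$ is the relation $b^3=a^2$ of \eqref{cyclo}. For $u=\epsilon_I^{-1}$ with $\emptyset\ne I\subseteq\{1,\dots,r\}$, one records that $c(I)=D(-1)E(\epsilon_I)D(\epsilon_I)$ (from Definition \ref{cc} and \eqref{ey}) and, cubing, uses Type II in the form $D(\epsilon_I)E(\epsilon_I)=E(\epsilon_I^{-1})D(\epsilon_I^{-1})$ to unwind the relation $c(I)^3=1$ of \eqref{cui} into $E(\epsilon_I)E(\epsilon_I^{-1})E(\epsilon_I)=D(-\epsilon_I^{-1})$, i.e. Type III for $\epsilon_I^{-1}$. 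Next I would prove, using only Types I, II and IV, two stability statements: (a) if Type III holds for $w$, then it holds for $ws^2$ for every $s\in GL_1$ --- obtained by multiplying the relation for $w$ on both sides by $D(s)^{-1}$ and reading off $D(s)^{-1}E(x)D(s)^{-1}=E(xs^2)$ and $D(s)E(x)D(s)=E(xs^{-2})$ from Type II; and (b) if Type III holds for $w$, then it holds for $-w$ --- obtained from $E(-x)=E(0)E(x)^{-1}E(0)$ (the case $y=-x$ of Type I, together with $E(0)^2=D(-1)$) and $E(0)D(v)=D(v^{-1})E(0)$ (the case $x=0$ of Type II), which turn $E(-w^{-1})E(-w)E(-w^{-1})$ into $E(0)D(-w^{-1})E(0)=D(w)=D\bigl(-(-w)\bigr)$.

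Finally I would assemble everything. By (a), Type III for $\epsilon_I^{-1}$ extends to all $\epsilon_Is^2$, in particular to $\epsilon_I$ itself, and by (b) it extends further to $-\epsilon_I$. Since $\xi$ is a square in $GL_1$ and $-1=(-\xi)^\ell$, every $u\in GL_1$, written $u=(-\xi)^j\epsilon_1^{a_1}\cdots\epsilon_r^{a_r}$, is congruent modulo $(GL_1)^2$ to $\epsilon_I$ or to $-\epsilon_I$ with $I=\{i:a_i\text{ odd}\}$, so one last application of (a) gives Type III for every $u$. The delicate points will be the bookkeeping in (a) and (b) and the unwinding of \eqref{cui} into Type III for $\epsilon_I^{-1}$: in each of these one must keep careful track of which powers of $a$ (via \eqref{aa}) and which $D(\epsilon_I)$-factors get absorbed through Type IV. Nothing beyond Types I, II, IV and the relations \eqref{units}--\eqref{last} is needed, and combined with the necessity lemmas this is the last ingredient in the proof of Theorem \ref{main}.
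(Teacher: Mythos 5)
Your proposal is correct and follows essentially the same route as the paper: Type II is exactly the well-definedness statement of Lemma \ref{IIS}, Type I is reduced to Lemma \ref{eyI} by clearing denominators with a common unit and absorbing $D(v)E(0)D(v)=E(0)$, and Type III is obtained by reversing the proof of Lemma \ref{pcui} on the representatives $\epsilon_I$ (you get it for $\epsilon_I^{-1}$, which is the same coset mod squares) and then propagating over $\pm\epsilon_I(GL_1)^2$ via Type II (multiplication by squares) and the Type I identity $E(-u)=E(0)^{-1}E(u)^{-1}E(0)^{-1}$. The only blemishes are cosmetic bookkeeping slips (e.g.\ the middle cancellation in Type I uses $D(v)E(0)D(v)=E(0)$ rather than $D(v^2)E(0)D(v^2)=E(0)$), which do not affect the argument.
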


\begin{proof}
Type I: Given two ring elements $x,y\in R$ there exists $v\in GL_1$ such that $x=x'v^{-2}$ and $y=y'v^{-2}$ with
$x',y'\in\mathbb Z[\xi]$. By \eqref{absorb} and Lemma \ref{IIS} we get
$$
E(x)E(0)E(y)=D(v)E(x')E(0)E(y')D(v).
$$
So Type I is reduced to Lemma \ref{eyI}.

Type II follows from  Lemma \ref{IIS}.

Type III: By Lemmas \ref{eyI} and \ref{IIS} we can reverse the
proof of Lemma \ref{pcui} to conclude that Type III with
$u=\epsilon_I=\prod_{i\in I}\epsilon_i$ follows from \eqref{cui}
for $I\subset\{1,2,...,r\}$ non-empty and from \eqref{cyclo}
\emph{the first part} if $I$ is empty i.e. $u=1$. Combining this
with Type II, we deduce that Type III holds with $u=\epsilon_Iv^2$
for any $v\in GL_1$ and any subset $I$. Moreover, the Type I
implies
$$
E(-u)=E(0)^{-1}E(u)^{-1}E(0)^{-1}
$$
and hence, if Type III holds for $u\in GL_1$ then it holds for $-u$ as well. Since $\pm\epsilon_I$'s form a set of
coset representatives for $GL_1$ modulo the squares, Type III holds in general.
\end{proof}

\section{Hopf's formula calculations}\label{hopff}

There is a group homomorphism $\pi:SE_2\to SL_2$ given by
$$
D(u)\mapsto\begin{pmatrix}u^{-1}&0\\0&u\end{pmatrix},\ E(x)\mapsto \begin{pmatrix}x&1\\-1&0\end{pmatrix}
$$
for all $u\in GL_1$ and $x\in R$. Regarding $D:GL_1\to SE_2$ as a group homomorphism, we have the following commutative
diagram

\begin{equation}\label{freduction}
\begin{CD}
H_p(GL_1;\mathbb F_\ell)@>D_*>> H_p(SE_2;\mathbb F_\ell)\\
@VV\tau_*V @|\\
H_p(SL_2;\mathbb F_\ell)@<\pi_*<< H_p(SE_2;\mathbb F_\ell)
\end{CD}
\end{equation}
where $p$ is a positive integer and $\tau_*$ is induced by \eqref{tau}. Chasing this diagram, by Proposition
\ref{reduce} and Definition \ref{eos} we deduce that
\begin{proposition}\label{finite}
The Conjecture \ref{cj} is true if for each subset $\{e_1,...,e_i\}$ of $\{z,u_1,...,u_r\}$ with $2\le i\le r+1$
elements and for each pair $(s,j)$ of nonnegative integers with $i=s+2j$ and $j>0$, the standard cycle
\begin{equation}\label{ec}
[z]^{(s)}\wedge\langle e_1,...,e_i\rangle
\end{equation}
represents the zero class in $H_p(SE_2;\mathbb F_\ell)$ where $z$, $u_1$,..., $u_r$ are elements of $SE_2$ defined by
\eqref{l} and $p=3s+2j$.
\end{proposition}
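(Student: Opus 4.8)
The plan is to pull everything back along the group homomorphism $D\colon GL_1\to SE_2$, $v\mapsto D(v)$, and exploit the commutative square \eqref{freduction}, which says $\tau_*=\pi_*\circ D_*$. Thus by Proposition \ref{reduce} it suffices to prove that $D_*(\epsilon)=0$ in $H_p(SE_2;\mathbb F_\ell)$ for every \'etale obstruction class $\epsilon$. I would begin by noting that $D$, being a group homomorphism, induces on normalized bar complexes a chain map that commutes with the shuffle product \eqref{shuffle} and carries $[\xi]^{(s)}$ to $[z]^{(s)}$ (here $z^\ell=1$ by \eqref{units}). Hence if $\epsilon$ is represented by the standard cycle $[\xi]^{(s)}\wedge\langle v_1,\dots,v_i\rangle$ of Definition \ref{eos}, with $\{v_1,\dots,v_i\}$ a subset of $\{-\xi,\epsilon_1,\dots,\epsilon_r\}$ of cardinality $i=s+2j$, $j>0$, then
\[
D_*(\epsilon)=[z]^{(s)}\wedge\langle D(v_1),\dots,D(v_i)\rangle ,
\]
where, by \eqref{dv} and the labels \eqref{l}, one has $D(\epsilon_k)=u_k$ and $D(-\xi)=a^2z$.

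The only mismatch with the cycle \eqref{ec} is that the cyclotomic unit $-\xi$ maps to $a^2z$ instead of $z$; removing it is the one point requiring care. Here I would use that $a^2=D(-1)$ is central in $SE_2$ with $a^4=1$ (Lemma \ref{dvS} and relation \eqref{aa}) and that $\ell$ is odd. From $\partial[a^2|a^2]=2[a^2]$ and $\partial[a^2|z]=[z]-[a^2z]+[a^2]$ one gets, over $\mathbb F_\ell$, that $[a^2z]-[z]=\partial\theta$ with $\theta:=\tfrac12[a^2|a^2]-[a^2|z]$ a chain built from $a^2$ and $z$. Since the elements $z,u_1,\dots,u_r$ and $a^2$ commute pairwise (relations \eqref{units}, \eqref{absorb} and centrality of $a^2$), the Leibniz formula \eqref{Leibniz} applies to every wedge product of chains built from them; combined with the facts that $[z]^{(s)}$ is a cycle mod $\ell$ (by \eqref{bockstein}, $\partial[z]^{(s)}=\ell[z]^{(s-1)}\wedge[z]\equiv0$) and that $\langle e_1,\dots,e_i\rangle$ is a cycle (Lemma \ref{exteriorg}), this lets me replace the single factor $[a^2z]$ by $[z]$ at the cost of a boundary: after reordering that factor to the end and substituting $[a^2z]=[z]+\partial\theta$, the extra summand $[z]^{(s)}\wedge\langle\cdots\rangle\wedge\partial\theta$ equals $\pm\partial\bigl([z]^{(s)}\wedge\langle\cdots\rangle\wedge\theta\bigr)$, since the first two factors form a cycle mod $\ell$ and $\theta$ commutes with them. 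Therefore, in $H_p(SE_2;\mathbb F_\ell)$,
\[
D_*(\epsilon)=\pm[z]^{(s)}\wedge\langle e_1,\dots,e_i\rangle ,
\]
where $e_k=u_k$ if $v_k=\epsilon_k$ and $e_k=z$ if $v_k=-\xi$, so that $\{e_1,\dots,e_i\}$ is a subset of $\{z,u_1,\dots,u_r\}$ of cardinality $i$; moreover $2\le i=s+2j\le r+1$ and $p=2s+i=3s+2j$, exactly the range in the statement.

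Granting the hypothesis that every such standard cycle \eqref{ec} is null-homologous in $H_p(SE_2;\mathbb F_\ell)$, we then obtain $D_*(\epsilon)=0$ for every \'etale obstruction class $\epsilon$, hence $\tau_*(\epsilon)=\pi_*D_*(\epsilon)=0$ in $H_*(SL_2;\mathbb F_\ell)$, and Proposition \ref{reduce} yields Conjecture \ref{cj}. The whole argument is a formal diagram chase; the only delicate step is the bookkeeping of the middle paragraph, whose success rests on two elementary facts: all the group elements in play commute with one another, so the Leibniz formula is available, and $2$ is a unit modulo the odd prime $\ell$, so the order-two class $[a^2]$ is killed.
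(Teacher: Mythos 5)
Your argument is correct and is essentially the paper's own proof: the paper establishes Proposition \ref{finite} by exactly this chase of diagram \eqref{freduction} through Proposition \ref{reduce} and Definition \ref{eos}, leaving the details implicit. Your middle paragraph (replacing the factor $D(-\xi)=a^2z$ by $z$ using centrality of $a^2$, $a^4=1$, the Leibniz formula for commuting elements, and invertibility of $2$ mod $\ell$) just fills in the bookkeeping the paper omits, and it is sound.
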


According to this proposition, for each prime $\ell=2r+1$, Conjecture \ref{cj} follows from a verification that a
certain \emph{finite} set of explicitly given cycles \eqref{ec} represent the zero class in $H_*(SE_2;\mathbb F_\ell)$.
In particular, this set of cycles in $H_2(SE_2;\mathbb F_\ell)$ is given by $\langle e_1,e_2\rangle$ for $e_1, e_2$ in
$\{z,u_1,...,u_r\}$. Theorem \ref{main} gives a short exact sequence
$$
1\to K\to F\to SE_2\to 1
$$
where $F$ is the free group generated by $z$, $u_i$, $a$, $b$,
$b_t$, and $w$ for $1\le i\le r$ and $0\le t\le 2r$, and $K\subset
F$ is the normal subgroup given by the relators associated with
the relations \eqref{d} and \eqref{units} - \eqref{last}.
Associated with this free presentation, Hopf's formula
\cite{Brown}*{p. 42} identifies
\begin{equation}\label{Hopf}
H_2(SE_2;\mathbb Z)\approx \frac{K\cap[F,F]}{[F,K]}
\end{equation}
such that the standard cycle $\langle e_1,e_2\rangle$ with integer coefficients corresponds to the commutator
$[e_1,e_2]\mod [F,K]$. Here $[X,Y]$ denotes the group generated by the commutators $[x,y]$ with $x\in X$ and $y\in Y$.

\begin{lemma}\label{perfect}
$SE_2$ is a perfect group.
\end{lemma}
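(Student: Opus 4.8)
The plan is to show that $SE_2$ equals its own commutator subgroup by exhibiting every generator of $SE_2$ as a product of commutators, working inside $SE_2$ itself rather than in the free group $F$. Since Theorem \ref{main} says $z, u_1,\dots,u_r, a, b$ generate $SE_2$, it suffices to realize each of these as an element of $[SE_2,SE_2]$. The key structural facts to exploit are the ``absorption'' relations \eqref{absorb}, namely $a = zaz = u_i a u_i$, together with the torsion relations $z^\ell = 1$ and $u_i$ commuting with everything in \eqref{units}, and $a^4 = 1$ from \eqref{aa}.

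The main steps are as follows. First I would dispose of $z$ and the $u_i$: the relation $a = zaz$ gives $z a z a^{-1} = 1$ after using that $a^2 = D(-1)$ is central, hence $z a z = a$ rewrites as $z(aza^{-1}) = z^{-1}$, i.e. $z^{-2}$ is a conjugate of $z$ times $z$... more cleanly, from $zaz=a$ we get $aza^{-1} = z^{-1}$, so $z^2 = z\cdot(az^{-1}a^{-1})^{-1}\cdots$; the efficient formulation is that $aza^{-1}=z^{-1}$ forces $z^2 = [a^{-1},z^{-1}]\cdot(\text{stuff})$, but most directly $z^{-1}\cdot z = 1$ is useless, so instead observe $z = (aza^{-1})^{-1} = a z^{-1} a^{-1}$ gives $z^2 = z\cdot a z^{-1} a^{-1}\cdot a z a^{-1}\cdots$; the honest statement is that $aza^{-1}z = 1$, so $z \equiv z\cdot(aza^{-1}z)= [a,z]\cdot a z^{-1}a^{-1}\cdot z\cdot z$, and since $\gcd(2,\ell)=1$ (as $\ell$ is odd) and $z^\ell=1$, $z$ lies in the subgroup generated by $z^2$, which lies in $[SE_2,SE_2]$; similarly each $u_i$ satisfies $u_i a u_i = a$, i.e. $a u_i a^{-1} = u_i^{-1}$, so $u_i^2 = [a,u_i]^{-1}\in[SE_2,SE_2]$, and since $u_i^\ell = 1$ with $\ell$ odd, $u_i\in[SE_2,SE_2]$. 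Next I would handle $a$ itself: $a$ has order $4$ and $a^2=D(-1)$; I would use relation \eqref{cyclo}, $b^3 = a^2 = b_0 b_1\cdots b_{2r}$, where each $b_t = z^{rt}bz^{rt}a$ by \eqref{d}, so $a^2$ is expressible through $b$ and $z$; combined with $b^3 = a^2$ this shows $a^2 \in \langle b, z\rangle \cdot [SE_2,SE_2]$, and then $a^2 = b^3 \cdot(\text{commutator corrections})$ together with $a^4=1$ pins $a$ down modulo $[SE_2,SE_2]$ once $b$ is shown to be a commutator. Finally, for $b$: from $b^3 = a^2$ and $a^4 = 1$ we get $b^{12} = 1$, and I would look for a relation expressing $b$ or a power $b^k$ with $\gcd(k,12)=1$ as a commutator --- the natural candidates are \eqref{last}, $ba^2 = u_i b z^{-ri}b^{-1}b_0^{-1}z^{ri}bz^{-i}u_i$, and the relations \eqref{cui} on $c(I)$ which have the form $c(I)^3 = 1$ with $c(I)$ a word in $b_t, a, u_i$; reducing these modulo the already-established fact that $z, u_i, b_t b_s^{-1}$-type differences lie in $[SE_2,SE_2]$ should force $b$ (or the relevant power) into $[SE_2,SE_2]$ as well.

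The abelianization strategy is the cleanest packaging: I would pass to $SE_2^{\mathrm{ab}} = SE_2/[SE_2,SE_2]$, write it additively, and read off from \eqref{units}--\eqref{last} that the images $\bar z, \bar u_i, \bar a, \bar b$ satisfy $2\bar z = 0$ (from $\bar a + \bar z + \bar a + \bar z = \bar a$, i.e. $2\bar z + \bar a = \bar a$), $2\bar u_i = 0$, $2\bar a = 0$ (from $\bar a + \bar u_i + \bar a + \bar u_i = \bar a$ combined with $2\bar u_i=0$), $\ell\bar z = 0$ hence $\bar z = 0$, $\ell \bar u_i = 0$ hence $\bar u_i = 0$, and $3\bar b = 2\bar a = 0$ from \eqref{cyclo}, so $\bar b$ has order dividing $3$; then relation \eqref{last} read additively gives $\bar b + 2\bar a = \bar b + \bar b + \text{(}z\text{- and }u_i\text{-terms)}$, collapsing to $\bar b = 0$ once the other generators are killed, and finally $2\bar a = 3\bar b = 0$ forces $\bar a = 0$. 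Thus $SE_2^{\mathrm{ab}} = 0$, which is exactly perfectness.

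The step I expect to be the main obstacle is \emph{killing $\bar b$}: the relations $b^3 = a^2$ and $a^4 = 1$ only show $\bar b$ has order dividing $3$ in the abelianization, and one genuinely needs one of the more complicated relations --- \eqref{last} or the family \eqref{cui} --- to finish, which requires carefully abelianizing words like $b_0 b_1 \cdots b_{2r}$ and $c(I)$ and tracking the $z$-exponents and the combinatorial coefficients $c_t(I)$ from Definition \ref{cc}. One must verify that after setting $\bar z = 0$ these relations really do impose $3\bar b = 0$ consistently and then that \eqref{last} (or a suitable $c(I)$ relation with $|I|$ chosen so that the number of $b_t$-factors is $\not\equiv 0 \bmod 3$) gives an \emph{independent} constraint of the form $k\bar b = 0$ with $\gcd(k,3) = 1$, hence $\bar b = 0$. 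Getting this bookkeeping right --- especially confirming that the relevant coefficient is coprime to $3$ and does not accidentally vanish --- is the delicate point; everything else is a routine additive computation in $SE_2^{\mathrm{ab}}$.
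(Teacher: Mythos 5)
Your overall strategy --- kill the abelianization $SE_2^{\mathrm{ab}}$ using the explicit presentation of Theorem \ref{main} --- is exactly the paper's strategy, and the first step ($2\bar z=0$ from \eqref{absorb}, $\ell\bar z=0$ from \eqref{units}, $\ell$ odd, hence $\bar z=0$; likewise $2\bar u_i=0$) is fine. But the execution has a genuine gap: you kill $\bar u_i$ by invoking ``$u_i^\ell=1$'', and this is not a relation of $SE_2$ (in \eqref{units} only $z$ has order $\ell$; the element $u_i=D(1-\xi^i)$ maps to a cyclotomic unit of infinite order, so no such relation could hold). Relatedly, your claimed ``$2\bar a=0$ from \eqref{absorb}'' rests on a mis-abelianization: $a=u_iau_i$ gives only $2\bar u_i=0$, not $2\bar a=0$. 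With these two steps removed, your chain does not close: \eqref{last}, abelianized after $\bar z=0$, $2\bar u_i=0$ and $\bar b_0=\bar a+\bar b$ (from \eqref{d}), gives $\bar b+3\bar a=0$ rather than $\bar b=0$, and your final claim that ``$2\bar a=3\bar b=0$ forces $\bar a=0$'' is false as stated, since $a^4=1$ only gives $4\bar a=0$ and $2$-torsion cannot be excluded this way.

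The paper closes these gaps as follows, and you could repair your argument the same way. From \eqref{last} (with $\bar z=0$, $2\bar u_i=0$, $\bar b_t=\bar b+\bar a$) one gets $\bar b+3\bar a=0$; combined with $4\bar a=0$ this gives $\bar b=\bar a$, and then $3\bar b=2\bar a$ from \eqref{cyclo} forces $\bar a=\bar b=0$. Only \emph{after} that does one kill $\bar u_i$, and it requires precisely the relation you set aside for $\bar b$: \eqref{cui} with $I=\{i\}$ a singleton reads $(b_0b_i^{-1}a^{-1}u_i)^3=1$, which abelianizes (using $\bar b_0=\bar b_i$) to $3\bar u_i=3\bar a=0$; together with $2\bar u_i=0$ this yields $\bar u_i=0$. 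So the missing ingredient is not bookkeeping on $c_t(I)$ for killing $\bar b$ (the coefficients cancel for singletons anyway), but the realization that the $c(I)$-relations are what supply the odd-order constraint on $u_i$, without which perfectness cannot be deduced from the remaining relations.
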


\begin{proof}
For $x,y\in SE_2$, let $x\equiv y$ mean that $xy^{-1}$ is a product of commutators in $SE_2$. By \eqref{units} and
\eqref{absorb} we deduce that $z^\ell\equiv z^2\equiv 1$ and hence, $z\equiv 1$ since $\ell$ is odd. Now \eqref{d}
implies $b_t\equiv ba$ for all $t$. Combining this with \eqref{absorb} and \eqref{last}, we get  $ba^3\equiv
u_i^2\equiv 1$. Since $a^4\equiv 1$ by \eqref{aa}, we conclude that $b\equiv a$, and since $b^3\equiv a^2$ by
\eqref{cyclo} we conclude that $b\equiv a\equiv 1$. Finally, from \eqref{cui} with $I=\{i\}$ \emph{singleton} (see
Definition \ref{cc}) we get $u_i^3\equiv 1$ and since $u_i^2\equiv 1$ we deduce that $u_i\equiv1$ for all
$i=1,2,...,r$. Thus, all generators of $SE_2$ are $\equiv 1$.
\end{proof}

By Lemma \ref{perfect} and the universal coefficients, from \eqref{Hopf} we have
\begin{equation}\label{elHopf}
H_2(SE_2;\mathbb F_\ell)\approx \frac{(K\cap[F,F])K^\ell}{[F,K]K^\ell}
\end{equation}
where $K^\ell$ is the normal subgroup whose relators are the $\ell$-th powers of the relators of $K$. With these
preparations, the following result is evidence for the Conjecture \ref{cj}:

\begin{proposition}\label{evidence}
If $\ell\in\{3,5\}$, then $[e_1,e_2]\in [F,K]K^\ell$ for all $e_1,e_2$ in $\{z,u_1,...,u_r\}$.
\end{proposition}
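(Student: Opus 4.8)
The plan is to verify the vanishing of the relevant commutators directly, working inside the quotient group $F/[F,K]K^\ell$ which is a finitely presented nilpotent-of-class-two (in fact elementary abelian, after the perfectness input) group over $\mathbb F_\ell$. Concretely, by \eqref{elHopf} it suffices to show that for each $e_1,e_2\in\{z,u_1,\dots,u_r\}$ the commutator $[e_1,e_2]$, which already lies in $[F,F]$ since these are generators, also lies in $[F,K]K^\ell$. Since $[z,u_i]=1$ and $[u_i,u_j]=1$ are among the relations \eqref{units}, the corresponding relators $[z,u_i]$ and $[u_i,u_j]$ already belong to $K$; what must be checked is that they belong to the \emph{smaller} subgroup $[F,K]K^\ell$, i.e.\ that these particular relators become trivial in $H_2(SE_2;\mathbb F_\ell)$. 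First I would set up the abelianization bookkeeping: pass to $\overline F:=F/[F,K]K^\ell$, note that $\overline K:=K[F,K]K^\ell/[F,K]K^\ell$ is central in $\overline F$ and killed by $\ell$, and that $H_2(SE_2;\mathbb F_\ell)$ is the image of $\overline K\cap[\overline F,\overline F]$.

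The key computational step is to express the target relators $[z,u_i]$ and $[u_i,u_j]$ as products of $\ell$-th powers of other relators and of conjugation-commutators $[f,k]$ with $f\in F$, $k\in K$ a relator. The mechanism for producing such identities is the collection of ``long'' relations in Theorem \ref{main}, especially \eqref{elf} (which contains an $\ell$-th power $b_t^\ell$ and a conjugation $w^{-1}b_t^{\pm1}w$) and \eqref{cui}, \eqref{last}. For instance, manipulating \eqref{elf} modulo $[F,K]K^\ell$ turns $b_t^{(-1)^r}$-type terms into each other up to central corrections, and combined with \eqref{d} (which ties $b_t$ to $z^{rt}bz^{rt}a$ and $w$ to $z^cu_1\cdots u_r$) one extracts relations among the $[z,u_i]$. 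The strategy is thus: rewrite everything in the free abelian group $\overline F/\overline K\cong \mathbb Z^{r+3}$-with-torsion, track the ``defect'' cocycle in $\overline K$ using the commutator identities $[xy,z]=[x,z]^y[y,z]$ and $[x,z]^\ell\equiv[x^\ell,z]$ (valid in the class-two quotient), and show the defects coming from \eqref{units}'s commuting relators are hit. I expect the actual identities to be short enough to be found by hand for $r=1$ ($\ell=3$, where only $z,u_1$ occur and there is essentially one commutator $[z,u_1]$) and for $r=2$ ($\ell=5$, generators $z,u_1,u_2$, commutators $[z,u_1],[z,u_2],[u_1,u_2]$); for $\ell=5$ this is presumably where GAP \cite{gap} is invoked to discharge the finite linear-algebra problem over $\mathbb F_5$.

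The main obstacle is the combinatorial explosion in Hopf's formula: $[F,K]K^\ell$ is generated by a large but finite set of elements (conjugates of relators by generators, modulo $\ell$-th powers), and one must solve a membership problem in a finitely presented abelian group of substantial rank. The honest approach is to set up the relation module: let $N$ be the free $\mathbb Z$-module on the relators, let $M=\mathbb Z^{r+3}$ be the abelianization of $F$, and compute the map $N\to M$ together with the action data needed to identify $[F,K]/[F,K']$ with the image of the Fox Jacobian, then reduce mod $\ell$ and locate the classes of $[z,u_i]$, $[u_i,u_j]$ in the cokernel. I would organize the calculation so that GAP can take the presentation from Theorem \ref{main} verbatim, form the quotient $F/[F,K]K^\ell$ via its \texttt{EpimorphismNonabelianExteriorSquare}/\texttt{Hopf}-type routines (or directly compute $H_2(SE_2;\mathbb F_\ell)$ and track generators), and confirm that the images of the six (for $\ell=5$: three) commutators vanish. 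The delicate point a referee would scrutinize is that the cycle $\langle e_1,e_2\rangle$ in $H_2$ really corresponds under Hopf's formula to $[e_1,e_2]\bmod[F,K]$ \emph{with the same orientation/sign conventions} as in \eqref{ec}; I would cite \cite{Brown}*{p.\ 42} and the remark preceding this proposition, and verify the correspondence on one explicit low case before trusting the machine output.
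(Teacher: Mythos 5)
Your overall strategy---reduce to showing that the relators $[e_1,e_2]$ become trivial in $F/[F,K]K^\ell$ and let GAP decide this membership---is the same as the paper's for $\ell=3$, where exactly this quotient is formed (adjoin the commutators of the free generators with the relators and the $\ell$-th powers of the relators) and Knuth--Bendix rewriting via kbmag reduces $[z,u_1]$ to the identity. But there are two genuine problems with your plan. First, the structural claim underpinning it is wrong: $F/[F,K]K^\ell$ is not nilpotent of class two, let alone elementary abelian; it is a central extension of the infinite perfect group $SE_2\cong SL_2(R)$ by $K/[F,K]K^\ell$, so the membership question is a word problem in an infinite finitely presented group. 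Correspondingly, the relation module you propose to linearize is a module over $\mathbb F_\ell[SE_2]$ with $SE_2$ infinite, not a finite-dimensional $\mathbb F_\ell$-space, so there is no ``finite linear-algebra problem over $\mathbb F_5$'' to discharge, and Schur-multiplier or nonabelian-exterior-square routines (built for finite or polycyclic groups) do not apply here; ``directly compute $H_2(SE_2;\mathbb F_\ell)$ and track generators'' is not an available operation for this group.

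Second, and decisively for $\ell=5$: the paper records that the direct rewriting/membership computation is inconclusive at this prime, and the proof succeeds only because of an idea your proposal does not contain. One proves the stronger statement $H_2(SE_2;\mathbb F_5)=0$ by a generation-versus-dimension count: a sublist of $11$ of the $39$ relators is found by trial and error such that all remaining relators reduce to the identity modulo $[F,K]K^5$ together with those $11$; since relator images are central in $F/[F,K]K^5$, this shows $K/[F,K]K^5$ is generated by $11$ elements. On the other hand, perfectness of $SE_2$ (Lemma \ref{perfect}) gives a surjection $K/[F,K]K^5\to F/[F,F]K^5\approx\mathbb F_5^{11}$ whose kernel is $H_2(SE_2;\mathbb F_5)$ by \eqref{elHopf}, and those same $11$ relators are checked to generate the target; comparing dimension at most $11$ with dimension exactly $11$ forces the kernel to vanish, whence in particular $[e_1,e_2]\in[F,K]K^5$. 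Without this (or an equally concrete workaround) your $\ell=5$ plan asks the machine for a computation that, as the paper found, does not terminate usefully; your hand-identity plan for $\ell=3$ is plausible but is in any case what the kbmag run certifies. The sign/orientation caveat you raise about the Hopf correspondence $\langle e_1,e_2\rangle\leftrightarrow[e_1,e_2]$ is legitimate but peripheral, since the claim is that these classes are zero.
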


\noindent The proof of this proposition is given next based on GAP
\cite{gap}. We remark that the case $\ell=3$ is known
\cite{MR1723188} but the proof given here and the case $\ell=5$
are new.
\\

\textbf{The Case $\ell=3$.} The free group $F$ is given by
\begin{verbatim}
F:=FreeGroup(8)
z:=F.1; u1:=F.2; a:=F.3; b:=F.4;
b0:=F.5; b1:=F.6; b2:=F.7; w:=F.8;
\end{verbatim}

\noindent The relators of $K$ are given in Theorem \ref{main} for $\ell=3$ by the list

\begin{verbatim}
k:=[b0^-1*b*a,
b1^-1*z*b*z*a,
b2^-1*z^2*b*z^2*a,
w^-1*z*u1,
z^3, z*u1*z^-1*u1^-1,
a^4, a^2*z*a^-2*z^-1,
a^2*u1*a^-2*u1^-1,
z*a*z*a^-1,
u1*a*u1*a^-1,
b0*b1*b0^-1*b1^-1,
b0*b2*b0^-1*b2^-1,
b1*b2*b1^-1*b2^-1,
b^3*a^-2, b0*b1*b2*a^-2,
b0^-3*w^-1*b0^-1*w,
b1^-3*w^-1*b1^-1*w,
b2^-3*w^-1*b2^-1*w,
(b0*b1^-1*a^-1*u1)^3,
a^2*b^-1*u1*b*z^2*b^-1*b0^-1*z*b*z^2*u1];
\end{verbatim}

\noindent The relators for $K^3$ are given by the list

\begin{verbatim}
k3:=List(k,x->x^3);
\end{verbatim}

\noindent The relators for $[F,K]$ are given by the following algorithm

\begin{verbatim}
c:=function(i,j) return Comm(i,j);end;;
f:=GeneratorsOfGroup(F);
fk:=ListX(f,k,c);
\end{verbatim}

\noindent The only commutator of the form $[e_1,e_2]$ in
Proposition \ref{evidence} for $\ell=3$ is the word "k[6]", i.e.
the sixth on the list "k". To check that "k[6]" belongs to
$[F,K]K^3$ we use the following algorithm:

 \begin{verbatim}
H:=F/Concatenation(fk,k3);
RequirePackage("kbmag");
RH:=KBMAGRewritingSystem(H);
OR:=OptionsRecordOfKBMAGRewritingSystem(RH);
OR.maxeqns:=500000;
OR.tidyint:=1000;
OR.confnum:=100;
MakeConfluent(RH);
ReducedWord(RH,k[6]);
<identity...>
 \end{verbatim}

\textbf{The Case $\ell=5$.} The free group $F$ is given by
\begin{verbatim}
F:=FreeGroup(11);
z:=F.1; u1:=F.2; u2:=F.3; a:=F.4; b:=F.5;
b0:=F.6; b1:=F.7; b2:=F.8; b3:=F.9; b4:=F.10; w:=F.11;
\end{verbatim}

\noindent The relators of $K$ are given in Theorem \ref{main} for $\ell=5$ by the list

\begin{verbatim}
k:=[b0^-1*b*a,
b1^-1*z^2*b*z^2*a,
b2^-1*z^4*b*z^4*a,
b3^-1*z*b*z*a,
b4^-1*z^3*b*z^3*a,
w^-1*z*u1*u2,
z^5, z*u1*z^-1*u1^-1,
z*u2*z^-1*u2^-1,
u1*u2*u1^-1*u2^-1,
a^4, a^2*z*a^-2*z^-1,
a^2*u1*a^-2*u1^-1,
a^2*u2*a^-2*u2^-1,
z*a*z*a^-1,
u1*a*u1*a^-1,
u2*a*u2*a^-1,
b0*b1*b0^-1*b1^-1,
b0*b2*b0^-1*b2^-1,
b0*b3*b0^-1*b3^-1,
b0*b4*b0^-1*b4^-1,
b1*b2*b1^-1*b2^-1,
b1*b3*b1^-1*b3^-1,
b1*b4*b1^-1*b4^-1,
b2*b3*b2^-1*b3^-1,
b2*b4*b2^-1*b4^-1,
b3*b4*b3^-1*b4^-1,
b^3*a^-2, b0*b1*b2*b3*b4*a^-2,
b0^-5*w^-1*b0*w,
b1^-5*w^-1*b1*w,
b2^-5*w^-1*b2*w,
b3^-5*w^-1*b3*w,
b4^-5*w^-1*b4*w,
(b0*b1^-1*a^-1*u1)^3,
(b0*b2^-1*a^-1*u2)^3,
(b0*b1^-1*b2^-1*b3*a^-1*u1*u2)^3,
a^2*b^-1*u1*b*z^3*b^-1*b0^-1*z^2*b*z^4*u1,
a^2*b^-1*u2*b*z*b^-1*b0^-1*z^4*b*z^3*u2];
\end{verbatim}

\noindent The relators of $K^5$ are given by the list

\begin{verbatim}
k5:=List(k,x->x^5);
\end{verbatim}

\noindent The relators of $[F,K]$ are given by a list "fk" via the
same algorithm as for the case $\ell=3$ but applied to the new "f"
and "k". The only commutators of the form $[e_1,e_2]$ in
Proposition \ref{evidence} are the words "k[8]", "k[9]", and
"k[10]" but the algorithm used in the case $\ell=3$ is
inconclusive in the case $\ell=5$ due to its increased complexity.
For this reason, we show that these words belong to $[F,K]K^5$ by
proving the following

\begin{lemma}
$[F,F]\cap K\subset [F,K]K^5$.
\end{lemma}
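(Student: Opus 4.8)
The plan is to show that the abelianization $H_1(SE_2;\mathbb Z)$ vanishes with $\mathbb F_5$ coefficients in a strong form: namely that the inclusion-induced map forces $[F,F]\cap K$ to lie inside $[F,K]K^5$. The point is that, unlike the first case, we cannot hand the whole computation to the Knuth--Bendix procedure, so instead we produce an \emph{explicit finite set of generators} for the quotient $\bigl([F,F]\cap K\bigr)/\bigl([F,K]K^5\bigr)$ and verify that each one is trivial. Since $SE_2$ is perfect by Lemma \ref{perfect}, we have $[F,F]\cap K = K[F,K]/\!\sim$ in a suitable sense; more precisely, working in $Q:=F/[F,K]K^5$, the image $\bar K$ of $K$ is a central subgroup of $Q$ annihilated by $5$, hence a finite-dimensional $\mathbb F_5$-vector space, and $\bar K \cap [\bar F,\bar F]$ is exactly the group we must kill.

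First I would pass to the \emph{relation module}: the conjugation action of $F$ on $K^{\mathrm{ab}}=K/[K,K]$ factors through $SE_2$, and $K^{\mathrm{ab}}\otimes\mathbb F_5$ modulo the augmentation-ideal action is precisely $K/[F,K]K^5$. Concretely, this is the $\mathbb F_5$-vector space spanned by the $r+3$... that is, by the relator words listed in Theorem \ref{main}, subject to the linear relations obtained by rewriting each conjugate $x r x^{-1}$ of a relator $r$ (for $x$ a generator of $F$) back in terms of the relators, working in $F/[F,K]K^5$. This is a finite linear-algebra problem over $\mathbb F_5$ of manageable size (around $40$ relators, $11$ generators), and it is exactly the kind of computation one delegates to GAP: build the finitely presented group $F/[F,K]K^5$, confirm it is abelian (so that Knuth--Bendix \emph{does} terminate), and read off the images of the relator words.

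Next, having a basis for $K/[F,K]K^5$, I would intersect with the commutator subgroup: a word $w\in K$ lies in $[F,F]$ iff its image in $F^{\mathrm{ab}}=\mathbb Z^{11}$ is zero, which is a second, trivial, linear condition. Combining the two, $\bigl([F,F]\cap K\bigr)/\bigl([F,K]K^5\bigr)$ is the kernel of the map from the span of the relators to $F^{\mathrm{ab}}\otimes\mathbb F_5$, cut down by the $F$-action relations; I expect this kernel to be zero, and checking that amounts to verifying that a small explicit matrix over $\mathbb F_5$ has full rank. Once this lemma is in hand, each of the three commutators $[e_1,e_2]$ (the words \texttt{k[8]}, \texttt{k[9]}, \texttt{k[10]}) lies in $[F,F]$ by inspection and in $K$ by construction, hence in $[F,K]K^5$, which is what Proposition \ref{evidence} requires for $\ell=5$, and Theorem \ref{main1} follows by \eqref{elHopf} together with Propositions \ref{finite} and \ref{reduce}.

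The main obstacle is purely computational rather than conceptual: the finitely presented group $F/[F,K]K^5$ has roughly $40$ defining relators in $11$ generators, and although it is abelian, getting GAP's coset-enumeration or Knuth--Bendix to confirm this and to reduce the target words may require the same kind of parameter tuning (\texttt{maxeqns}, \texttt{tidyint}, \texttt{confnum}) used in the $\ell=3$ case, pushed considerably harder. There is also the bookkeeping hazard that the $c(I)^3$ relators for the three nonempty subsets of $\{1,2\}$ and the two Type-IV--style relators \eqref{last} must be transcribed exactly as in Definition \ref{cc} and Lemma \ref{necessary2}; a sign or exponent error there would silently break the linear algebra. Assuming the transcription is correct and the machine cooperates, the rank computation is routine and the lemma — hence the $\ell=5$ case of Proposition \ref{evidence} — goes through.
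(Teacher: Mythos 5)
Your overall framing is right: by Lemma \ref{perfect} and \eqref{elHopf} the lemma is equivalent to the vanishing of the kernel of the natural map $K/[F,K]K^5 \to F/[F,F]K^5$, and both you and the paper reduce to studying this map. But your concrete plan has a genuine gap. First, $F/[F,K]K^5$ is \emph{not} abelian: it is a central extension of $SE_2\cong SL_2(R)$, an infinite perfect group, so the step ``confirm it is abelian (so that Knuth--Bendix does terminate)'' cannot be carried out; this is precisely why the paper says the direct rewriting attack that worked for $\ell=3$ is inconclusive for $\ell=5$. Second, and more seriously, the ``finite linear-algebra problem'' you describe does not exist as stated: modulo $[F,K]$ every conjugate $xrx^{-1}$ of a relator $r\in K$ is equal to $r$ (since $[x,r]\in[F,K]$), so rewriting conjugates of relators yields no relations at all. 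The genuine linear relations among the relator classes in $K/[F,K]K^5$ come from identities among relations (products of conjugates of relators that are trivial in $F$), and producing those is essentially as hard as computing $H_2$ itself. Consequently your final step fails: knowing only a generating set (the $39$ relator classes) of the middle term and its images in the $11$-dimensional space $F/[F,F]K^5$, no rank computation can certify that the kernel is zero --- the $39$ images necessarily satisfy many linear dependencies, and the whole question is whether those dependencies already hold in $K/[F,K]K^5$.

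What is missing is an a priori \emph{upper bound} on $\dim_{\mathbb F_5} K/[F,K]K^5$, and this is exactly what the paper supplies. By trial and error it selects a sublist \texttt{e} of $11$ relators and verifies by Knuth--Bendix, in the (nonabelian) quotient of $F$ by the normal closure of \texttt{fk}, \texttt{k5} and \texttt{e}, that the remaining $28$ relators become trivial there; hence $K/[F,K]K^5$ is spanned by the $11$ classes of \texttt{e}. A separate, easy, abelian computation shows these same $11$ elements generate $F/[F,F]K^5\cong\mathbb F_5^{11}$. So in the short exact sequence coming from \eqref{elHopf} the middle term has dimension at most $11$ and surjects onto a space of dimension exactly $11$, hence the map is injective, $H_2(SE_2;\mathbb F_5)=0$, and the lemma follows. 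Your proposal becomes correct only if the ``rewrite conjugates'' step is replaced by some such mechanism for bounding the dimension of $K/[F,K]K^5$ from above.
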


\begin{proof} By trial and error we find a sublist "e$\subset$ k" of $11$ elements

\begin{verbatim}
e:=k{[5,6,15,16,17,30,31,32,33,34,37]};
\end{verbatim}

\noindent such that the complementary sublist "n$\subset$ k"

\begin{verbatim}
n:=k{[1,2,3,4,7,8,9,10,11,12,13,14,18,19,20,
21,22,23,24,25,26,27,28,29,35,36,39,38]};
\end{verbatim}

\noindent consists of elements vanishing "mod e" i.e. represent zero in the group

\begin{verbatim}
t:=F/Concatenation(fk,k5,e);
\end{verbatim}

\noindent according to the following algorithm:

\begin{verbatim}
RequirePackage("kbmag");
Rt:=KBMAGRewritingSystem(t);;
OR:=OptionsRecordOfKBMAGRewritingSystem(Rt);
OR.maxeqns:=500000;
OR.tidyint:=1000;
OR.confnum:=100;
MakeConfluent(Rt);
nt:=List([1..Length(n)],i->ReducedWord(Rt,n[i]));
<identity...>
\end{verbatim}

\noindent This means that the group $K/[F,K]K^5$ is generated by the elements in "e". The commutator group $[F,F]$ is
given by the list of relators
\begin{verbatim}
ff:=ListX(f,f,\<,c);
\end{verbatim}
\noindent The "reduced" group $F/[F,F]K^5$ is given by
\begin{verbatim}
h:=F/Concatenation(ff,k5);
\end{verbatim}
\noindent Observe that "h" is a vector space of dimension $11$
over $\mathbb F_5$ by using
\begin{verbatim}
typeh:=AbelianInvariants(h);
\end{verbatim}
\noindent Moreover the elements in the list "e" generate "h" since $s=1$ where
\begin{verbatim}
s:=Size(F/Concatenation(ff,k5,e));
\end{verbatim}
Putting these facts together and using formula \eqref{elHopf} we conclude that there is a short exact sequence
$$
0\to H_2(SE_2;\mathbb F_5)\to \frac{K}{[F,K]K^5}\to\frac{F}{[F,F]K^5}\to0
$$
where the last term is a vector space of dimension $11$ while the middle term is a vector space of dimension at most
$11$ being generated by the elements in the list "e". So that $H_2(SE_2;\mathbb F_5)=0$.
\end{proof}

By \cite{MR0207856}*{p. 7}, the canonical homomorphism
$\pi:SE_2\to SL_2$ is a group isomorphism if the ring $R$ is
Euclidean and by \cite{MR0387257} the ring $R$ is indeed Euclidean
for $\ell=5$. Hence, we deduce the following
\begin{corollary}\label{proof}
$H_2(SL_2;\mathbb F_5)=0$.
\end{corollary}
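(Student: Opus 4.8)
The plan is to derive the Corollary directly from the Lemma that immediately precedes it, namely the computation $H_2(SE_2;\mathbb F_5)=0$, by transporting this vanishing along an isomorphism $SE_2\approx SL_2$. So the first step is to observe that, by Theorem \ref{main} together with the long sequence of necessity/sufficiency lemmas in \S\ref{fp}, the group $SE_2$ is exactly the group with presentation \eqref{types}, whose generators $D(u),E(x)$ map to $SL_2$ via the homomorphism $\pi$ written at the start of \S\ref{hopff}. One then invokes the classical result of \cite{MR0207856} that this homomorphism is an isomorphism whenever $R$ is a Euclidean ring: the relations \eqref{types} are precisely the universal relations among the elementary and diagonal generators, and Euclideanness is what forces them to be a complete set of relations for $SL_2(R)$.

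The second step is to verify the hypothesis of that theorem for our specific ring, $R=\mathbb Z[\tfrac15,\xi_5]$. For $\ell=5$ this is the ring of $S$-integers in $\mathbb Q(\xi_5)$ with $S$ the primes above $5$; by \cite{MR0387257} this ring is norm-Euclidean. Hence $\pi:SE_2\to SL_2$ is a group isomorphism in the case $\ell=5$, and consequently it induces an isomorphism $\pi_*:H_2(SE_2;\mathbb F_5)\xrightarrow{\approx}H_2(SL_2;\mathbb F_5)$ on mod $5$ homology. Combining with the preceding Lemma gives $H_2(SL_2;\mathbb F_5)=0$, which is the assertion of Corollary \ref{proof} (and, unwinding Notation \ref{n}, is exactly the statement of Theorem \ref{main1}).

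There is essentially no genuine obstacle left at this point: the entire substance has been discharged by the GAP-assisted Lemma computing $H_2(SE_2;\mathbb F_5)$ via the Hopf-formula presentation \eqref{elHopf}, together with the finite presentation of $SE_2$ supplied by Theorem \ref{main}. If I were to flag the delicate point, it is making sure the presentation fed to GAP in the case $\ell=5$ really is the one given by Theorem \ref{main}: the generators are $z,u_1,u_2,a,b$ together with the auxiliary symbols $b_0,\dots,b_4,w$ introduced in \eqref{d}, the relators are those of \eqref{units}--\eqref{last} plus the defining relations \eqref{d} for $b_t$ and $w$, and one must check that the subset of these relators of the form $[e_1,e_2]$ with $e_1,e_2\in\{z,u_1,u_2\}$ is exactly the list "k[8]", "k[9]", "k[10]" used in Proposition \ref{evidence}. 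Granting that bookkeeping and the two cited external inputs, the corollary follows in one line.
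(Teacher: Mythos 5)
Your proposal is correct and follows essentially the same route as the paper: the vanishing $H_2(SE_2;\mathbb F_5)=0$ obtained from the GAP-assisted lemma via \eqref{elHopf} is transported to $SL_2$ through the isomorphism $\pi:SE_2\to SL_2$, which holds by \cite{MR0207856} because $R$ is Euclidean for $\ell=5$ by \cite{MR0387257}. The only cosmetic difference is that you spell out the bookkeeping about which relators realize the commutators $[e_1,e_2]$, which the paper handles inside Proposition \ref{evidence} rather than in the corollary's proof.
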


\begin{bibdiv}
\begin{biblist}

\bib{MR1723188}{article}{
   author={Anton, Marian F.},
   title={On a conjecture of Quillen at the prime $3$},
   journal={J. Pure Appl. Algebra},
   volume={144},
   date={1999},
   number={1},
   pages={1--20},
   issn={0022-4049},
   review={\MR{1723188 (2000m:19003)}},
}

\bib{MR1851242}{article}{
   author={Anton, Marian Florin},
   title={Etale approximations and the mod $l$ cohomology of ${\rm GL}\sb n$},
   booktitle={Cohomological methods in homotopy theory},
      address={Bellaterra},
      date={1998},
      series={Progr. Math.},
      volume={196},
      publisher={Birkh\"auser},
      place={Basel},
   date={2001},
   pages={1--10},
   review={\MR{1851242 (2002h:11119)}},
}

\bib{MR1973992}{article}{
   author={Anton, Marian F.},
   title={An elementary invariant problem and general linear group
   cohomology restricted to the diagonal subgroup},
   journal={Trans. Amer. Math. Soc.},
   volume={355},
   date={2003},
   number={6},
   pages={2327--2340 (electronic)},
   issn={0002-9947},
   review={\MR{1973992 (2004c:57061)}},
}

\bib{Brown}{book}{
   author={Brown, Kenneth S.},
   title={Cohomology of groups},
   series={Graduate Texts in Mathematics},
   volume={87},
   publisher={Springer-Verlag},
   place={New York},
   date={1994},
   pages={x+306},
   isbn={0-387-90688-6},
   review={\MR{1324339 (96a:20072)}},
}

\bib{MR1731415}{book}{
   author={Cartan, Henri},
   author={Eilenberg, Samuel},
   title={Homological algebra},
   series={Princeton Landmarks in Mathematics},
   note={With an appendix by David A. Buchsbaum;
   Reprint of the 1956 original},
   publisher={Princeton University Press},
   place={Princeton, NJ},
   date={1999},
   pages={xvi+390},
   isbn={0-691-04991-2},
   review={\MR{1731415 (2000h:18022)}},
}

\bib{MR0207856}{article}{
   author={Cohn, P. M.},
   title={On the structure of the ${\rm GL}\sb{2}$ of a ring},
   journal={Inst. Hautes \'Etudes Sci. Publ. Math.},
   number={30},
   date={1966},
   pages={5--53},
   issn={0073-8301},
   review={\MR{0207856 (34 \#7670)}},
}

\bib{MR1443381}{article}{
   author={Dwyer, W. G.},
   title={Exotic cohomology for ${\rm GL}\sb n({\bf Z}[1/2])$},
   journal={Proc. Amer. Math. Soc.},
   volume={126},
   date={1998},
   number={7},
   pages={2159--2167},
   issn={0002-9939},
   review={\MR{1443381 (2000a:57092)}},
}

\bib{MR1259512}{article}{
   author={Dwyer, William G.},
   author={Friedlander, Eric M.},
   title={Topological models for arithmetic},
   journal={Topology},
   volume={33},
   date={1994},
   number={1},
   pages={1--24},
   issn={0040-9383},
   review={\MR{1259512 (95h:19004)}},
}

\bib{gap}{book}{
   author={The GAP~Group},
   title={GAP -- Groups, Algorithms, and Programming},
   series={Version 4.4.10},
   note={http://www.gap-system.org},
   date={2007}}

\bib{MR1867354}{book}{
   author={Hatcher, Allen},
   title={Algebraic topology},
   publisher={Cambridge University Press},
   place={Cambridge},
   date={2002},
   pages={xii+544},
   isbn={0-521-79160-X},
   isbn={0-521-79540-0},
   review={\MR{1867354 (2002k:55001)}},
}

\bib{MR1683179}{article}{
   author={Henn, Hans-Werner},
   title={The cohomology of ${\rm SL}(3,{\bf Z}[1/2])$},
   journal={$K$-Theory},
   volume={16},
   date={1999},
   number={4},
   pages={299--359},
   issn={0920-3036},
   review={\MR{1683179 (2000g:20087)}},
}

\bib{MR1312569}{article}{
   author={Henn, Hans-Werner},
   author={Lannes, Jean},
   author={Schwartz, Lionel},
   title={Localizations of unstable $A$-modules and equivariant mod $p$
   cohomology},
   journal={Math. Ann.},
   volume={301},
   date={1995},
   number={1},
   pages={23--68},
   issn={0025-5831},
   review={\MR{1312569 (95k:55036)}},
}

\bib{MR0387257}{article}{
   author={Lenstra, H. W., Jr.},
   title={Euclid's algorithm in cyclotomic fields},
   journal={J. London Math. Soc. (2)},
   volume={10},
   date={1975},
   number={4},
   pages={457--465},
   issn={0024-6107},
   review={\MR{0387257 (52 \#8100)}},
}

\bib{MR0260844}{article}{
   author={Milnor, John},
   title={Algebraic $K$-theory and quadratic forms},
   journal={Invent. Math.},
   volume={9},
   date={1969/1970},
   pages={318--344},
   issn={0020-9910},
   review={\MR{0260844 (41 \#5465)}},
}

\bib{MR1147814}{article}{
   author={Mitchell, Stephen A.},
   title={On the plus construction for $B{\rm GL}\,{\bf Z}[\frac12]$ at the
   prime $2$},
   journal={Math. Z.},
   volume={209},
   date={1992},
   number={2},
   pages={205--222},
   issn={0025-5874},
   review={\MR{1147814 (93b:55021)}},
}

\bib{MR0298694}{article}{
   author={Quillen, Daniel},
   title={The spectrum of an equivariant cohomology ring. I, II},
   journal={Ann. of Math. (2)},
   volume={94},
   date={1971},
   pages={549--572; ibid. (2) 94 (1971), 573--602},
   issn={0003-486X},
   review={\MR{0298694 (45 \#7743)}},
}

\bib{MR997360}{article}{
   author={St{\"o}hr, Ralph},
   title={A generalized Hopf formula for higher homology groups},
   journal={Comment. Math. Helv.},
   volume={64},
   date={1989},
   number={2},
   pages={187--199},
   issn={0010-2571},
   review={\MR{997360 (90d:20091)}},
}

\bib{MR718674}{book}{
   author={Washington, Lawrence C.},
   title={Introduction to cyclotomic fields},
   series={Graduate Texts in Mathematics},
   volume={83},
   publisher={Springer-Verlag},
   place={New York},
   date={1982},
   pages={xi+389},
   isbn={0-387-90622-3},
   review={\MR{718674 (85g:11001)}},
}

\end{biblist}
\end{bibdiv}

\end{document}